%% file: manuscript.tex
\documentclass{siamart251104}
\usepackage{fullpage}
\usepackage{dkmath}
\usepackage{amsmath,amssymb,amsfonts,commath}
\usepackage{mathtools}
\usepackage{silence}
\usepackage{subcaption}
\usepackage{graphicx}
\usepackage{graphbox}
\usepackage{tikz}
\usetikzlibrary{math}
\usepackage{enumitem}
\input{defs.tex}

\newsiamremark{remark}{Remark}
\newsiamthm{thm}{Theorem}
\newsiamthm{newdef}{Definition}

\crefname{appendix}{Appendix}{Appendices}
\Crefname{appendix}{Appendix}{Appendices}
\crefname{theorem}{Remark}{Remarks}
\Crefname{theorem}{Remark}{Remarks}
\crefname{section}{Section}{Sections}
\Crefname{section}{Section}{Sections}
\crefname{subsection}{Section}{Sections}
\Crefname{subsection}{Section}{Sections}
\crefname{table}{Table}{Tables}
\Crefname{table}{Table}{Tables}

\def\kth{Department of Mathematics, KTH Royal Institute of Technology, Stockholm, Sweden}

\title{Fast summation on rectangular cuboids with arbitrary periodicity in the DMK framework}

\author{
David Krantz%
    \thanks{\kth\,
    ({\tt davkra@kth.se}).}
    \and
Ludvig af Klinteberg%
    \thanks{Department of Business and Mathematics, Mälardalen University (MDU), Västerås, Sweden\,
    ~({\tt ludvig.af.klinteberg@mdu.se}).}
    \and
Anna-Karin Tornberg%
    \thanks{\kth\,
   ({\tt akto@kth.se}).}
}

\date{\today}

\begin{document}

\maketitle

\begin{abstract}\label{s:abstract}
Dual-space multilevel kernel-splitting (DMK) is a fast summation
framework that combines ideas from the fast multipole method, Ewald
summation, and multilevel summation. Originally formulated for
free-space problems, and later extended to fully periodic problems on
a cube, it decomposes the kernel interaction into a smooth global
contribution and a hierarchy of localized interactions evaluated on an
octree.
We extend DMK to problems on rectangular cuboids with periodic
boundary conditions in one, two, or three coordinate directions. The
periodization leverages the fact that interactions on all tree levels
below the root are localized, 
allowing for their evaluation
with minimal modification on a cubical tiling
of the domain. The remaining smooth root-level far-field contribution
is evaluated in Fourier space, with Fourier series in the periodic
directions and Fourier integrals in the free directions. For reduced
periodicity, truncated kernels are used to regularize singular and
near-singular Fourier kernels, yielding rapidly convergent trapezoidal
discretizations and a unified treatment of all periodicities. For
large-aspect-ratio cuboids, the root-level sum can be accelerated
using the fast Fourier transform.
We validate the method for the electrostatic potential and Stokeslet,
stresslet and rotlet potentials, for all periodicities and a wide
range of aspect ratios. Numerical experiments show that the
periodization adds only a small overhead to the original free-space DMK 
algorithm, also for high-aspect-ratio cuboids.
The resulting method provides a framework for applying DMK to problems
with mixed periodicity on rectangular cuboids, and extends naturally
to other non-oscillatory kernels for which a kernel split is
available.
\end{abstract}

\begin{keywords}
Fast summation, periodic boundary conditions, reduced periodicity, Stokes flow, electrostatics, high aspect ratio
\end{keywords}

\section{Introduction}\label{s:introduction}
Large-scale computations of periodic potentials arise frequently in many
areas of science and engineering, including electrostatic potentials
in molecular dynamics (MD) simulations and discretized layer
potentials in Stokes simulations of microfluidic flow. We here consider such potentials, written
in the form
\begin{equation}
  u^{D\per}(\vx_\tind) = \sum_{\sind=1}^{N_s} \sum_{\v p\in\perindsetD}^* K(\vx_\tind-\vx_\sind + \v p) 
  \rho_\sind, \quad \tind=1,\dots,N_t,
  \label{eq:periodic_sum}
\end{equation}
where the $*$ superscript indicates that the term $\v p=0$ is ignored
if $\vx_\tind=\vx_\sind$. 
The kernel $K$ maps from scalar-, vector- or tensor-valued source strengths $\rho_\sind$ to a scalar- or vector-valued potential $u^{D\per}$, and is typically singular at the origin and decays slowly at infinity. The $N_s$ sources
 are located at points $\vx_\sind \in \reals^3$ inside a rectangular cuboid
$\Omega = [0,L_1) \times [0,L_2) \times [0,L_3)$, called the primary
cell, and the $N_t$ targets are in the same cuboid. The sources
and targets commonly coincide, and we will for simplicity of exposition assume
$N_s=N_t=N$.  
The set $P_{D\per}$ of periodic images depends on the number of periodic directions $D$. Without loss of generality, we assume that the coordinate axes are relabeled such that the first $D$ directions are periodic,
\begin{equation}
  \label{eq:periodic-images}
  \perindsetD = \begin{cases}
    \{ (\bar{p}_1 L_1, \bar{p}_2 L_2, \bar{p}_3 L_3) : \bar{p}_i
    \in \mathbb{Z} \}, & \text{if $D=3$ (triply periodic)}, \\
    \{ (\bar{p}_1 L_1, \bar{p}_2 L_2, 0) : \bar{p}_i
    \in \mathbb{Z} \}, & \text{if $D=2$ (doubly periodic)}, \\
    \{ (\bar{p}_1 L_1, 0, 0) : \bar{p}_i
    \in \mathbb{Z} \}, & \text{if $D=1$ (singly periodic)}.
  \end{cases}
\end{equation}
The free-space case $D=0$ corresponds to $\perindset{0}=\{\v 0\}$.
The computational complexity of evaluating \eqref{eq:periodic_sum} is
clearly quadratic, $O(N^2)$. Moreover, the lattice sum over periodic
images converges slowly, due to the slow decay of $K$, and is in many cases only conditionally convergent \cite{Smith2008,Pozrikidis1996}.
The cases with one or two periodic directions introduce additional
difficulties, since periodic interactions must be combined with
free-space behavior in the non-periodic directions.

This work is concerned with the efficient evaluation of
\eqref{eq:periodic_sum} on rectangular cuboids with one, two, or three periodic directions.  The algorithms presented will make use of a
decomposition, or \emph{kernel-split} of the interaction kernel $K$,
with a splitting parameter $\decpar$,
\begin{equation}
K(\v x)=\Kmoll(\v x,\decpar)+\Kres(\v x,\decpar).
\label{eq:singlelevel_ks}
\end{equation}
The split separates the singular short-range behavior from a smooth long-range part whose Fourier representation decays rapidly enough to be truncated. 
Here, the {\em
  mollified} kernel $\Kmoll(\v x,\decpar)$ is smooth at the origin and
matches the long-range behavior of $K$. Meanwhile, the
\emph{residual} kernel $\Kres(\v x,\decpar)$ matches $K$ at the origin
and then decays rapidly. For the harmonic kernel $H(|\vx|)$ of
periodic electrostatic interactions, the classic kernel-split introduced
by Ewald \cite{Ewald1921} in 1921 is
\begin{align}
    H(r) = \frac{1}{4 \pi r} = \frac{\erf(r/\decpar)}{4 \pi r} + 
\frac{\erfc(r/\decpar)}{4 \pi r}
    := H_M(r,\decpar) + H_R(r,\decpar),
    \label{eq:ewald-split}
\end{align}
with $\erf$ the error function and $\erfc$ its complement,
\begin{equation} \erf(x) = \frac{2}{\sqrt{\pi}} \int_0^x e^{-t^2} \dif t , \qquad 
\erfc(x) = 1 - \erf(x) .
\end{equation}
Due to the smoothness of the mollified kernel $H_M(|\v x|,\decpar)$, its Fourier transform $\widehat{H}_M(|\v k|,\decpar)$ is rapidly decaying,
\begin{equation}
\widehat{H}_M(k,\decpar) = \frac{e^{-k^2\decpar^2/4}}{k^2},
\label{eq:Hhat_k}
\end{equation}
and can be truncated in Fourier space. The residual kernel
$H_R(|\v x|,\decpar)$ acts as a local correction to $H_M$; it matches
$H$ at the origin, but decays rapidly in physical space and can be
truncated beyond a cutoff radius.  Both decay rates are controlled by
the length scale $\decpar$, which for a given error tolerance $\eeps$
is proportional to the cutoff radius.

The split \eqref{eq:singlelevel_ks} is non-unique and can be done in
different ways. Both the Ewald split \eqref{eq:ewald-split} and the
classical Stokeslet split by Hasimoto~\cite{Hasimoto1959} are based on
the Gaussian, but other choices are possible. In~\cite{jiang2025cpam},
a kernel split based on the zeroth-order prolate spheroidal wave
function (PSWF) was derived for the harmonic kernel, and new
PSWF-based splits for the Stokes kernels were later derived in
\cite{StokesDMK2026}. These splits improve efficiency by reducing the Fourier bandwidth needed for a given
real-space support. We use them in the present work, but
the periodization strategy developed below is independent of
this choice.

Reverting to the generic notation, inserting the split 
 \eqref{eq:singlelevel_ks} into the sum \eqref{eq:periodic_sum} for
the triply periodic case ($D=3$), the far-field part can be evaluated in Fourier space using the Poisson summation formula\footnote{The term $\vk=0$ being omitted from the Fourier sum corresponds to electrostatic charge neutrality or Stokes zero-mean flow, and is required for convergence of the periodic sum. In the case of the stresslet, an additional compensation term $\ufar^{3\per,0}(\vx_\tind)$ is required, see e.g.~\cite[Appendix A.1]{bagge_fast_2023} or \cite[Section 3.2]{af_klinteberg_fast_2016}.} 
 \begin{align}
\ufarP{3}(\vx_\tind,\decpar) &= \sum_{\sind=1}^N \sum_{\v p\in\perindset{3}} \Kmoll(\vx_\tind-\vx_\sind + \v p,\decpar) \rho_\sind
= \sum_{\sind=1}^N \frac{1}{\lvert \Omega \rvert} \sum_{\substack{\vk\in\wavenumset^3\\ \vk \ne 0}} \Kmollhat(\vk,\decpar) e^{i\vk\cdot(\vx_\tind-\vx_\sind)}
  \rho_\sind,
\label{eq:ewald_far_3P}
\end{align}
where the domain volume is $\lvert \Omega \rvert = L_1 L_2 L_3$ and
the set of discrete wavenumbers is given by 
\begin{equation}
  \wavenumset^3
  := \left\{ 2\pi
  \left(\frac{\kintx}{L_1}, \frac{\kinty}{L_2}, \frac{\kintz}{L_3}\right)
  : \bar{k}_i \in \mathbb{Z} \right\}
  .
    \label{eq:wavenumbers-3p}
\end{equation}
In the case of partial periodicity, the summation over discrete wave numbers will in the free (non periodic) directions be replaced by integration over continuous wave numbers. As the Fourier kernels are singular at $|\vk|=0$ (see e.g.~\eqref{eq:Hhat_k}), careful numerical treatment is required.  This is true also for the smallest periodic wave numbers, for which integrands can become sharply peaked.

The above process, known as Ewald summation, remedies the slow
convergence of the lattice sum, but is in its basic form still too expensive for large systems. 
It can be accelerated by using particle-mesh Ewald methods to
evaluate the far-field part associated with the mollified kernel
$\Kmoll$ in \eqref{eq:ewald_far_3P}. 
Sources are then spread onto a uniform grid and the computed potential interpolated from the grid to the target points, with intermediate steps accelerated by applying the fast Fourier transform (FFT) on the uniform grid.
With appropriate scaling, this has a computational complexity of $O(N\log N)$, while short-range interactions are evaluated using neighbor lists at $O(N)$ complexity.
However, for
highly clustered configurations
the total cost deteriorates substantially, and may in
the worst case approach $O(N^2)$.

The classical use of Ewald summation has been in electrostatics (i.e.,
the harmonic kernel) with full periodicity \cite{Deserno1998,Essmann1995,Hockney1981,Lindbo2011c}, but the approach has also been generalized to Stokes kernels
and reduced periodicity (including the free space
case)~\cite{AfKlinteberg2016fse,AfKlinteberg2014a,Lindbo2010,Nestler2015,Saintillan2005}.
In settings that are not fully periodic, the use of so-called
truncated (or modified) kernels, based on an idea by \cite{Vico2016},
is critical for removing the singularity of the Fourier kernels.
First introduced for Ewald summation in the free-space
Stokes case in~\cite{AfKlinteberg2016fse}, the approach has later been
extended to periodic boundary conditions in any number (three, two,
one, or none) of the spatial directions, in a unified framework for
electrostatics and the kernels of Stokes flow
\cite{shamshirgar_fast_2021,bagge_fast_2023}.

While Ewald methods are highly efficient, their reliance on the FFT,
which operates on a uniform grid, makes the computational work difficult to adapt to clustered point distributions.  Tree-based methods, by
contrast, are built on adaptive data structures and can
therefore efficiently handle highly nonuniform point distributions. They can typically achieve $O(N)$ complexity, excluding
the tree construction cost, which usually is negligible.  Over the
past few decades, a range of such methods has been developed,
including the fast multipole method
(FMM)~\cite{greengard1988,Greengard1987,Tornberg2008,Ying2004},
$\mathcal H^2$-matrix
methods~\cite{borm2010,giebermann2001comp,hackbusch2002anm}, and
multilevel summation
approaches~\cite{brandt1990jcp,brandt1998,multilevel_summation_2015,tensor_multilevel_ewald}.
The FMM and $\mathcal H^2$-matrix methods reduce computational cost by
approximating far-field interactions via hierarchical low-rank
representations of the original kernel. Multilevel summation takes a
different route. Rather than partitioning interactions into near and
far components, it constructs a telescoping expansion in which the
kernel is replaced by a sequence of smooth, increasingly localized
approximations that are efficiently handled at their respective
scales.

Ewald-based methods are especially efficient for triply periodic problems,
while free directions introduce additional costs. Tree-based methods, on
the other hand, are most naturally formulated for free-space problems.
Periodic boundary conditions can be imposed in FMM-based methods by
explicitly summing over a finite set of neighboring image boxes and
representing the remaining images through a far-field
correction. Classical periodic FMMs achieve this by evaluating lattice
sums of multipole expansions, often using Ewald-type
decompositions~\cite{Amisaki2000,Yoshii2018}. A second class of
methods constructs the far-field correction using a set of proxy
sources outside the primary cell.  In the approach of
\cite{Gumerov2014}, the far field of a periodic point sum is
represented by proxy sources whose coefficients are determined from a
least-squares problem, while \cite{Barnett2018} use proxy sources in
an extended 2D boundary-integral formulation to impose periodicity
constraints. In \cite{Yan2018a}, the equivalent-source machinery of the
kernel-independent FMM (KIFMM) is used to construct a periodic
multipole-to-local operator from a periodic Green's function,
evaluated by Ewald summation. Recently, \cite{Li2026} avoid any
periodization-specific linear solve by introducing a hierarchical
recursion over image boxes using KIFMM equivalent surfaces. In a
distinct approach in 2D, \cite{Pei2023} compute the far-field
correction using plane-wave expansions of the fundamental solutions,
accelerated through the non-uniform fast Fourier transform (NUFFT)
\cite{Barnett2019finufft}.  Many of the FMM periodization methods 
currently available in 3D are most naturally formulated for cubic or nearly
cubic cells, and may lose efficiency in other cases. For high-aspect-ratio rectangular cells, lattice-sum methods typically require many image boxes or Fourier modes, while proxy- or equivalent-source methods require a larger number of auxiliary points, which increases cost and can affect conditioning. 
Thus, while tree methods provide adaptivity, efficient periodization on rectangular cuboids remains nontrivial, especially for large aspect ratios. This motivates adaptive periodic summation methods that can handle such geometries without sacrificing efficiency.

The recently developed dual-space multilevel kernel-splitting (DMK)
framework~\cite{jiang2025cpam,StokesDMK2026} brings together elements
of the methods discussed. It is based on a multilevel
decomposition into regularized kernels, leverages Fourier-space
diagonalization of the interactions at each level, and transfers local expansions through the tree hierarchy. 
In reported benchmarks, DMK has been shown to be as fast or faster than state-of-the-art FMM implementations~\cite{jiang2025cpam,StokesDMK2026}.
Like the FMM, DMK is adaptive, but it is more
naturally amenable to periodization. For cubic domains, an extension
of the free-space formulation to full periodicity is straightforward
and computationally efficient, and has already been demonstrated
\cite{StokesDMK2026}.

\subsection{Contributions and outline}\label{ss:contributions_outline}
In this paper, we extend the periodic DMK framework from cubic, fully periodic domains to rectangular cuboids with periodicity in one, two, or three directions. 
The main observation underlying the extension is that in DMK, kernel interactions are localized at all levels of the tree below the root. Periodicity therefore affects these levels only through periodic wrapping of near-neighbor interactions, while the periodic contribution from the lattice of image cells is isolated in the smooth root-level far-field.

The main components of our contribution are as follows.
\begin{itemize}
\item A cubical tiling of the primary cell into cubes whose side length is the shortest periodic length $L_\text{min}$, with one octree associated with each cube. When all periodic side lengths are integer multiples of this length, the tiling is exact. Otherwise, the primary cell is embedded in a bounded extension of width $O(L_\text{min})$. The localized part of DMK is evaluated on this tiled domain.
\item A Fourier-space treatment of the smooth root-level far-field contribution for arbitrary periodicity. For reduced periodicity, truncated kernels are used to regularize the singular and near-singular Fourier kernels that arise in the non-periodic directions.
\item An FFT-accelerated evaluation of the root-level far-field contribution for large-aspect-ratio cuboids, reducing the geometry-dependent cost of the far-field contribution.
\end{itemize}

The resulting method preserves the adaptive structure of DMK, and 
 the per-source cost stays nearly independent of aspect ratio in experiments at constant point density. 
We demonstrate the periodization method for electrostatic potentials and for the Stokeslet, stresslet, and rotlet potentials of Stokes flow, using PSWF-based splits of the harmonic and biharmonic kernels. The periodization strategy itself is not tied to this particular choice of split, and should extend directly to other non-oscillatory kernels for which a suitable kernel split is available.

The remainder of this paper is organized as follows.
\Cref{sec:multilevel} introduces the multilevel kernel-splitting for arbitrary periodicity.
\Cref{sec:DMKalg} reviews the original DMK method in free space and on a periodic cube.
\Cref{sec:3P} extends the method to arbitrary cuboids in the triply periodic case and introduces FFT-based acceleration for large aspect ratios. 
\Cref{sec:reduced_per} treats singly and doubly periodic cuboids, using truncated kernels for the Fourier integrals in the free directions. 
\Cref{s:numerical_results} presents numerical experiments on 
accuracy, parameter selection, periodization overhead, and aspect-ratio dependence.
\Cref{s:conclusions} concludes the paper.
Technical details on Fourier conventions, Stokes kernels, truncated kernels, quadrature estimates, and additional Stokes-kernel experiments are collected in \cref{sec:fourier-defs,sec:stokes-split,sec:trunc_biharmonic,sec:trunc_Stokes,sec:error_estimates,sec:stokes_results}.

\section{Multilevel splitting and summation in arbitrary periodicity}\label{sec:multilevel}
This section introduces the multilevel kernel splitting used in DMK and formulates the resulting decomposition for singly, doubly, and triply periodic problems. We first describe the telescoping split, then review the PSWF-based kernels used in the numerical examples,
and finally state the compact and far-field contributions that are evaluated by the periodized DMK algorithm.

\subsection{Multilevel kernel-splitting}

The single-level split introduced in \eqref{eq:singlelevel_ks}
is extended in DMK to the multilevel telescoping expansion
\begin{equation}
 K(\vx)= \Kmoll(\vx,\decpar_0)
 +\sum_{\ell=0}^{\noLevels-1} \kd_\ell(\vx) +\Kres(\vx,\decpar_0/2^\noLevels)  \qquad (\noLevels \geq 1)
\label{eq:multilevel_ks}
\end{equation}
where $\noLevels$ is the number of levels and
\begin{equation}
\kd_\ell(\v x) =\Kmoll(\v x,\decpar_0/2^{\ell+1}) -\Kmoll(\v x,\decpar_0/2^{\ell}).
\label{eq:diffkernel}
\end{equation}
The terms of this expansion are associated with the refinement levels of an octree. If $r_{\ell}=r_0/2^\ell$ 
denotes the box side length at level $\ell$, 
then both $r_\ell$ and the splitting parameter $\decpar_\ell=\decpar_0/2^\ell$ decrease by a factor two from one level to the next. Thus the terms of the kernel expansion have the same relative scale on every level, see illustration in \cref{fig:Diffkernel_decay}.

The difference kernel $\kd_\ell$ will be both rapidly decaying and smooth, and $\decpar_0$ will be chosen depending on an error tolerance such that the contribution from both the difference kernel and the residual kernel can be localized to nearest neighbor boxes. 
Both the difference kernel $\kd_\ell$ and the residual kernel $\Kres$ are rapidly decaying and either compactly supported (more on this in \cref{sec:window_functions}) or can be truncated beyond a cutoff radius. We will, with a slight abuse of terminology, refer to the kernels as being compactly supported in both cases.

With the split in \eqref{eq:multilevel_ks} inserted in \eqref{eq:periodic_sum}, we get 
\begin{align}
  \begin{split}
    \uP{D}(\vx_{\beta}) &= \ufarP{D}(\vx_{\beta},\decpar_0)+ \sum_{\ell=0}^{\noLevels-1} \udifflP{\ell}{D}(\vx_{\beta}) +\ulocalP{D}(\vx_{\beta},\decpar_0/2^\noLevels) + \uselfP{D}(\vx_\beta,\decpar_0) \\
    &= \ufarP{D}(\vx_{\beta},\decpar_0) + \ucompactP{D}(\vx_{\beta},\decpar_0, \noLevels) ,
  \end{split}
  \label{eq:u_four_partsDP}
\end{align}
Here $\ufarP{D}$ is the smooth, root-level contribution which depends on the imposed periodicity, while $\uselfP{D}$ is a pointwise local correction. The compact part
$\ucompactP{D}$ consists of the localized terms in the telescoping
split. 
For periodization, the key observation is that all far-field
contributions due to periodicity are contained in $\ufarP{D}$. The compact part requires only nearby periodic images, since all its interactions are restricted to local neighborhoods.
The individual terms in \eqref{eq:u_four_partsDP} are described in \cref{ss:eval_residual_difference,ss:eval_far}.

\subsection{Window functions and PSWF-based splits}
\label{sec:window_functions}

We now briefly review kernel-splitting and the use of prolate spheroidal wave functions (PSWFs).
Let $\wfunc:\R\to\R$ be an even \emph{window function}, chosen to be rapidly
decaying or compactly supported in real space, with Fourier transform
(as defined in \cref{sec:fourier-defs})
satisfying
\begin{align}
  \widehat\wfunc(0) = 1,
  \quad \text{and} \quad
  \widehat\wfunc^{(n)}(0)=0\quad \text{for $n$ odd}.
  \label{eq:wfunchat_prop}
\end{align}
Furthermore, let $\Phi(r)$ be defined by the formula
\begin{align}
    \Phi(r) = 2\int_r^\infty \wfunc(t) \dif t= 1-2\int_0^r \wfunc(t) \dif t.
    \label{eq:def_erfc_like_fcn}
\end{align}
The harmonic kernel can then be split as
\begin{equation}
  \widehat H_M(k,\decpar) = \frac{1}{k^2} \widehat\wfunc(k\decpar),
  \qquad 
H_R(r,\decpar)=\frac{\Phi(r/\decpar)}{4\pi r}.
  \label{eq:Hhatsplit_wfunc}
\end{equation}
This is equivalent to the Ewald split in \eqref{eq:ewald-split} for the Gaussian $\widehat\wfunc(k\decpar)=e^{-k^2\decpar^2/4}$. A window function as defined in \eqref{eq:wfunchat_prop} can also be used to define splitting functions for the Stokeslet and stresslet, via the biharmonic kernel. See formulas in \cref{sec:stokes-split}, and \cite{StokesDMK2026} for details. 

In \cite{jiang2025cpam}, it was shown that a more efficient
kernel-split can be obtained by replacing the Gaussian with $\prol$,
the zeroth-order PSWF, defined as the leading eigenfunction to the
truncated Fourier transform
\begin{align}
    \lambda_n^c \psi_n^c(x) = \int_{-1}^1 \psi_n^c(t) e^{ictx} \dif t,
    \quad n \ge 0, \: c > 0, \:  x\in[-1,1].
    \label{eq:PSWFeig}
\end{align}
For a comprehensive review of PSWFs, see
\cite{osipov_prolate_2013}. Equation \eqref{eq:PSWFeig} implies that
if $\prol$ is extended by zero outside $[-1,1]$, it is self-similar under the Fourier
transform inside the band $[-c,c]$,
\begin{align}
  \hatprol(k) = \lambda_0^c \prol(k/c), \quad |k| \le c .
         \label{eq:PSWFhat}
\end{align}
It is well-known that a function cannot be compactly supported in both
real and Fourier space. The key feature of $\prol$ is that it
maximizes spectral concentration, meaning that among functions
supported on $[-1,1]$, $\prol$ minimizes the fraction of $L^2$ energy
outside the band $[-c,c]$. In this sense we get the optimal compactly
supported window function through the definition
\begin{align}
   \wfunc(x)  = \frac{\prol(x)}{\lambda_0^c\prol(0)} = \frac{\prol(x)}{\int_{-1}^1 \prol(t)\dif t},
  \quad |x| \le 1.
  \label{eq:wfunc_pswf}
\end{align}
We numerically truncate its Fourier transform to zero outside the band
$[-c,c]$ and, using \eqref{eq:PSWFhat}, set
\begin{align}
  \widehat\wfunc(k) = \frac{\hatprol(k)}{\hatprol(0)} = \frac{\prol(k/c)}{\prol(0)},
  \quad |k| \le c.
  \label{eq:wfunchat}
\end{align}
The error due to this truncation decreases exponentially with the
bandlimit $c$, and in parameter selection for tolerance $\eeps$ it
holds that
\begin{align}
  c = O(\log(1/\eeps)).
  \label{eq:pswf_c}
\end{align}
In the numerical experiments, $c$ is selected by the procedure described in \cref{ss:error_control_param_selection}.
For a detailed analysis of the use of
PSWFs in kernel-splitting, in the context of Ewald summation, see
\cite{EwaldPSWF2026arxiv}.

\begin{remark}
\label{rm:comp_supp}
With $\wfunc(x)$ compactly supported in $|x|\le 1$, it holds that
$\Kres(\v x,\decpar)$ is compactly supported on
$|\v x|\in[0, \decpar]$, and equivalently that
$\Kmoll(\v x,\decpar)\equiv K(\v x)$ for $|\v x| > \decpar$. 
For the harmonic kernel split, this property is immediate from the formulas above. For the biharmonic kernel, it was proved in \cite[Theorem 1]{StokesDMK2026}, and it carries over to the Stokeslet and stresslet.
With a rapidly decaying window
function, such as the Gaussian, the residual kernel will not be
compactly supported, but decay rapidly, and allow for truncation. 
\end{remark}

\begin{remark}\label{rem:PSWF_eval}
To evaluate $\prol$, its Legendre expansion is used. Finding and
evaluating this expansion can be done efficiently, and we use the
implementation for this found in Chebfun \cite{chebfun}.
Derivatives of $\prol$ can be accurately evaluated through the
differentiation and subsequent Chebfun expansion of
\eqref{eq:PSWFeig}.
\end{remark}

\subsection{Evaluation of contributions from the residual and difference kernels}\label{ss:eval_residual_difference}
Consider the four parts in the sum \eqref{eq:u_four_partsDP}.
As was stated in \cref{rm:comp_supp}, it holds that the scaled residual kernel $\Kres(\v x,\decpar)=0$, for $|\v x|>\decpar$. The residual kernel contains the original singularity at the origin, and with an appropriate scaling, the evaluation of the local contribution 
 \begin{align}
\ulocalP{D}(\vx_\tind,\decpar) &= \sum_{\sind=1}^N \sum_{\v p\in\perindset{D}}^* \Kres(\vx_\tind-\vx_\sind + \v p,\decpar) \rho_\sind.
\label{eq:u_local}
\end{align}
is done with a direct summation in a local neighborhood of $\vx_\tind$.

Now, consider the contribution from the difference kernel at level $\ell$,
\begin{align}
\udifflP{\ell}{D}(\vx_{\beta}) &= \sum_{\sind=1}^N \sum_{\v p\in\perindset{D}}^* \kd_\ell(\vx_\tind-\vx_\sind + \v p) \rho_\sind.
\label{eq:u_diff_l}
\end{align}
The difference kernel $\kd_\ell(\v x)=\Kmoll(\v x,\decpar_0/2^{\ell+1}) -\Kmoll(\v x,\decpar_0/2^{\ell})$ is defined in \eqref{eq:diffkernel}. Since $\kd_\ell$ is smooth and rapidly decaying in Fourier space, we evaluate the sum using its Fourier transform. We therefore write 
\begin{align}
\kd_\ell(\vx)=\frac{1}{(2\pi)^3} \int_{\R^3} \kdhat_\ell(\vk)
e^{i \vk \cdot \vx} \dif\vk.
\label{eq:Dl_int}
\end{align}
This representation is useful because it expresses translations in terms of plane waves, which are used to form the expansion in the DMK algorithm. 

Let $\Kmax$ denote the effective Fourier cutoff of the selected kernel split to the prescribed tolerance $\eeps$. For a PSWF-based split, this cutoff is the PSWF bandlimit, so that $\Kmax=c$, with $c$ chosen e.g.~according to \eqref{eq:pswf_c}.
Since $\Kdiff_\ell$ contains the finer mollified scale $\decpar_{\ell+1}$, the level-dependent cutoff is $K_\ell=\Kmax/\decpar_{\ell+1}$. Thus the Fourier integral \eqref{eq:Dl_int} is truncated to $[-K_\ell,K_\ell]^3$. 
We approximate this truncated integral 
by the trapezoidal rule. Let $\setm{n_f}=\{-n_f,\ldots,n_f\}^3$, and $\NF=2n_f+1$ denote the total number of Fourier grid points per coordinate direction. With $\v k=h_\ell\v m$, we use
\begin{align}
\kd_\ell(\vx) \approx \frac{h_{\ell}^3}{(2\pi)^3}
\sum_{\vm \in \setm{n_f}} \kdhat_\ell(h_{\ell}\vm)e^{ih_{\ell}\vm \cdot \vx}.
\label{eq:Dl_int_discr}
\end{align}
The relevant real-space range is determined by the support of $\Kdiff_\ell$. At level $\ell$, the kernel vanishes outside $|\v x|>\decpar_{\ell}$. Hence the range of $\x$ is halved when $\ell$ is increased by one. At the same time, $K_\ell=\Kmax/\decpar_{\ell+1}=2(\Kmax/\decpar_\ell)$, so the Fourier interval doubles from one level to the next. 
The oscillations in the truncated integral are due to the phase factor $e^{i\v k \cdot\mathbf{x}}$, which retains the same range independent of level $\ell$. 
Following \cite[Section 3.2]{jiang2025cpam}, we require $2\pi/h_\ell\geq 3\decpar_\ell$ to avoid aliasing. We use the limiting value $h_\ell=(2\pi)/(3\decpar_\ell)$, and hence $n_f =\lceil K_\ell/h_\ell \rceil=\lceil 3\Kmax/\pi \rceil$.
With this, $n_f$, and therefore $\NF=2n_f+1$, are independent of $\ell$.

\subsection{The self-interaction correction}
\label{sec:self-inter-corr}

In \eqref{eq:u_four_partsDP}, a so called self interaction term $\uselfP{D}(\vx_\tind)$ is introduced. 
  As stated in \eqref{eq:periodic_sum}, the term when $\v p=0$ 
and $\vx_\tind=\vx_\sind$, the so called self-interaction, should be excluded from the sum. In the evaluation of $\ufarP{3}$ in \eqref{eq:ewald_far_3P}, part of it is included, and it will be so independent of the periodicity, as we evaluate in Fourier space. Hence, we can correct this by adding the term
\begin{equation}
\uselfP{D}(\vx_\tind,\decpar)= -
\left(\lim_{r\rightarrow 0} K_M(r,\decpar)\right)\rho_{\tind}.
\end{equation}
For the electrostatic case, with the mollified kernel $H_M(r,\decpar)=(1-\Phi(r/\decpar))/(4\pi r)$, we get 
$\uselfP{D}(\vx_\tind,\decpar)=-\wfunc(0)\rho_{\tind}/(2\pi \decpar)$.
The self-interaction terms for the Stokes kernels can be found in \cref{sec:stokes-split}. 

\subsection{Far-field evaluation for different periodicities}\label{ss:eval_far}

The-far field term in the triply periodic case 
$\ufarP{3}(\vx_\tind,\decpar)$, was stated in \eqref{eq:ewald_far_3P} with the generic notation for the mollified kernel. 
Specifically for electrostatics (the harmonic kernel), using the definition in \eqref{eq:Hhatsplit_wfunc}, this reads  
\begin{align}
\ufarP{3}(\vx_\tind,\decpar) 
= \frac{1}{L_1 L_2 L_3} 
\sum_{\sind=1}^N \sum_{\substack{\vk\in\wavenumset^3\\ \vk \ne 0}} 
\frac{\wh\wfunc(|\vk|\decpar)}{|\vk|^2} 
e^{i\vk\cdot(\vx_\tind-\vx_\sind)}
  \rho_\sind.
\label{eq:ewald_far_3P_es}
\end{align}
For the doubly ($D=2$) and singly periodic ($D=1$) case (as defined in \eqref{eq:periodic-images}), we get
\begin{align}
\ufarP{2}(\vx_\tind,\decpar) 
& = \frac{1}{L_1 L_2 \cdot 2\pi} 
\sum_{\sind=1}^N \sum_{\substack{(k_1,k_2) \in\wavenumset^2}} 
 \int_{\mathbb{R}} \frac{\widehat\wfunc(|(k_1,k_2,\kappa_3)|\decpar)}
 {k_1^2+k_2^2+\kappa_3^2} 
e^{i (k_1,k_2,\kappa_3)\cdot(\vx_\tind-\vx_\sind)}
  \rho_\sind  \dif\kappa_3
\label{eq:ewald_far_2P_es} \\
\ufarP{1}(\vx_\tind,\decpar) 
& = \frac{1}{L_1 (2\pi)^2} \sum_{\sind=1}^N \sum_{\substack{k_1 \in\wavenumset^1}} \int_{\mathbb{R}} \int_{\mathbb{R}}
\frac{\widehat\wfunc(|(k_1,\kappa_2,\kappa_3)|\decpar)}
{k_1^2+\kappa_2^2+\kappa_3^2} 
e^{i (k_1,\kappa_2,\kappa_3)\cdot(\vx_\tind-\vx_\sind)}  \rho_\sind  \dif\kappa_2 \dif\kappa_3,
\label{eq:ewald_far_1P_es} 
\end{align}
where the sets of discrete wavenumbers are given by, cf.~\eqref{eq:wavenumbers-3p},
\begin{equation}
  \label{eq:wavenumbers-Dp}
  \wavenumset^D
  := \left\{ 2\pi
  \left(\frac{\kintx}{L_1}, \cdots, \frac{\kint_D}{L_D}\right)
  : \kint_i \in \mathbb{Z} \right\}
  \subset \R^D, \quad D=1,\,2,\,3.
\end{equation}
Hence, for each periodic direction that is replaced by a free direction, the summation over discrete wavenumbers is replaced by integration over continuous wavenumbers, as the Fourier transform must be used instead of a discrete Fourier series. 

We could similarly generalize the expression $\ufarP{3}(\vx_\tind,\decpar)$ in \eqref{eq:ewald_far_3P} to $D=1,2$. 
The explicit formulas in \eqref{eq:ewald_far_2P_es} and \eqref{eq:ewald_far_1P_es} reveal several features that are important for the discretization for all kernels.
The Fourier window function $\widehat\wfunc$ is compactly supported, which means that the integral can be approximated with the trapezoidal rule to spectral accuracy as long as the integrand is smooth \cite{Trefethen2014}. For the term when $k_1=k_2=0$ in \eqref{eq:ewald_far_2P_es} and similarly for $k_1=0$ in \eqref{eq:ewald_far_1P_es}, the integrand will however have a singularity. 
In addition, if e.g.~$k_1=2\pi/L_1$ with $L_1$ large, then the integrand in \eqref{eq:ewald_far_2P_es} with $k_2=0$, and the corresponding integrand in \eqref{eq:ewald_far_1P_es}, will be sharply peaked.
While formally smooth, it will require a fine uniform grid to resolve. 
We will in \cref{sec:reduced_per} discuss how to utilize truncated kernels \cite{Vico2016} to still allow for the use of the trapezoidal rule in both these cases with a moderate number of discretization points.

\begin{remark} With the Gaussian window function, $\widehat\wfunc(k\decpar)=e^{-k^2\decpar^2/4}$, analytical formulas for the integrals above can be found in \cite[Equations (13)--(16)]{shamshirgar_fast_2021}. Similarly, for the Stokes kernels, corresponding integrals are given in \cite[Section 2.3]{bagge_fast_2023}. These formulas contain special functions and are not used in the fast methods in these Ewald summation papers. 
\end{remark}

\section{The DMK framework}
\label{sec:DMKalg}

We here briefly describe the original DMK algorithm for free-space \cite{jiang2025cpam} and for a triply periodic cube \cite{StokesDMK2026}. This will be necessary before we can derive its extension to rectangular cuboids and arbitrary periodicities in \cref{sec:3P,sec:reduced_per}, respectively. 

\subsection{Steps of the algorithm} We summarize the algorithm in five steps. 
\subsubsection{Octree construction} The sources and targets 
are first embedded in a box of equal sides. This is then
subdivided into an octree, where each parent box is recursively
subdivided into 8 children, until each leaf node contains at most $n_s$
sources. It is then ensured that the tree is \emph{level-restricted}\footnote{Also called \emph{2:1 balanced}.}; that
is, any two leaf nodes that share a boundary point differ by at most
one refinement level. Each box has a list of neighboring boxes, and on
the triply periodic cube these lists are extended by periodic
wrapping, after which level-restriction is enforced again. Of special
importance is the concept of a \emph{colleague} of a box $\unitbox$,
which is any box at the same refinement level as $\unitbox$ that
shares a boundary point with $\unitbox$ (this includes $\unitbox$ itself).

\subsubsection{Upward pass: construction of proxy charges} 
\label{sec:dmk_upward}
Every box
on every level is covered by a $p \times p \times p$ grid of
first-kind Chebyshev points. Starting at the leaf nodes and going up
the tree, \emph{proxy charges} are formed at the grid nodes in each box through so-called anterpolation, see \cite[Section 2.2]{jiang2025cpam}. On level $\ell$ of the tree, these proxy charges will by construction reproduce the field of the original sources in the box
to a prescribed precision $\eeps$, for the (locally smooth)
difference kernel $\kd_\ell(\v x)$.

\subsubsection{Top level: far-field interactions} 
At the root level, $\ell=0$, we need to compute $\ufarP{D}$ in
\eqref{eq:u_four_partsDP}, based on the mollified kernel $\Kmoll$. In free-space, the mollified kernel is truncated to the domain size and evaluated as a Fourier integral \cite{Vico2016}. In periodicity, the lattice sum over all periodic boxes is evaluated as a Fourier series as in \eqref{eq:ewald_far_3P_es}. The sources are the root-level proxy points, and the potential is evaluated on the Chebyshev grid.

\subsubsection{Downward pass: evaluation of difference kernels} 
\label{sec:dmk_downward}
Starting at $\ell=0$ and going down in the tree, the $p^3$ proxy points in
each box are used to compute local, $\NF^3=(2n_f+1)^3$-term Fourier expansions of the
potentials based on the difference kernel $\kd_\ell$.  For each box
$\unitbox$, the contributions to $\udifflP{\ell}{D}$
are limited to the colleagues of $\unitbox$, see \cref{fig:Diffkernel_decay}. The potential is evaluated on the Chebyshev grid
by translating and accumulating the local Fourier expansions. In a
coarse-to-fine sweep, the accumulated potentials are represented as
Chebyshev expansions, which are shifted and rescaled into the
children at level $\ell+1$.

\subsubsection{Leaf level: residual interactions} 
At each leaf node, the accumulated Chebyshev expansion of the smooth
potential is evaluated at each target location.  The remaining kernel
interactions governed by $\Kres(\vx,\decpar/2^{\noLevels})$ are localized
to nearest neighbor boxes and also evaluated directly at each target.

\begin{figure}[!t]
\centering
\input{interaction_levels.tikz}
\caption{A 2D sketch of how the truncation radius $r_\ell$ for
$\kd_\ell(\vx- \vx^*)$ 
decreases at each level to match the box side length, s.t.~a source
point in a box $\unitbox$ contributes only to targets in the {\em colleagues} of $\unitbox$, 
i.e.~neighboring boxes and the box itself at each level.
}
\label{fig:Diffkernel_decay}
\end{figure}

\subsection{Computational complexity}
\label{sec:comp-compl}

In the algorithm, interaction with source and target points only
happens on the leaf level, which is necessary for achieving linear
scaling. Note that $\NF$ and $p$ are constant on each level, and hence
the work required per box to evaluate the difference kernel
contribution is independent of the level.
A detailed analysis of the computational complexity of the algorithm can be found in \cite[Section 3.5.1]{jiang2025cpam}. In simplified form, it can be written as
\begin{align}
  C(N,n_s,p,n_f) = O \left( N n_s + p^3 N + p^{4}  + p \NF^3\right).  
\end{align}
An important feature contributing to the speed of the algorithm is that both the Fourier grid and the Chebyshev grid have tensor-product structure. This structure can be exploited in all operators acting on and between such grids through \emph{sum factorization}. For example, applying the Kronecker-product operator 
$A \otimes A \otimes A$, with $A \in \mathbb R^{m \times n}$, to a tensor $X \in \mathbb R^{n \times n \times n}$ has a naive cost of $O(m^3 n^3)$, but sum factorization reduces it to
\begin{align}
    O(m n^3 + m^2 n^2 + m^3 n) .
\end{align}

\section{DMK for cuboids in triple periodicity}
\label{sec:3P}

In this section, we introduce a DMK algorithm for triply periodic
boundary conditions on a rectangular cuboid. The modifications needed for a problem with reduced
periodicity will be discussed in \cref{sec:reduced_per}.
Our periodization of DMK relies on the following two key features of
the original algorithm, as formulated in \cref{sec:DMKalg}:
\begin{itemize}
\item The hierarchical subdivision of the domain into a tree, with proxy points representing the sources at each level.

  \item The coupling between the multilevel summation summation and the tree,
  such that the difference kernels and the residual kernel are localized
  to the nearest neighbor boxes on their levels,
  $\supp(\kd_\ell)=r_\ell$ and $\supp(\Kres(\vx,\decpar_0/2^\noLevels))=r_\noLevels$.
\end{itemize}
Due to the above features, it is only the mollified kernel $\Kmoll$
that mediates the contribution from distant particles (including
infinity) in a periodic setting.
At all other levels, the only effect
of periodicity is that interactions using difference kernels and the residual kernel should include periodic images of the sources inside the primary box, up to radius $r_\ell$.
Our periodization is therefore divided into two distinct parts, one for
compactly supported contribution, and one for the periodic far-field
contribution.

Consider a general rectangular domain (cuboid) with triply periodic
boundary conditions, i.e. let
$\Omega=[0,L_1)\times[0,L_2)\times[0,L_3)$ and assume (without loss of
generality) that $L_1$ is the \emph{shortest} periodic length.  We
will first formulate the strategy for the creation of a tree structure
that will allow the evaluation of the contribution from the difference
kernel \eqref{eq:u_diff_l} and the residual (local) kernel
\eqref{eq:u_local}. The evaluation of the far field part
$\ufarP{D}(\vx_{\beta},\decpar_0)$ in \eqref{eq:u_four_partsDP}, based
on the mollified kernel (\eqref{eq:ewald_far_3P},
\eqref{eq:ewald_far_3P_es}), is discussed in 
\cref{ss:eval_far_3p}.

\subsection{Domain setup and tree construction}
\label{sec:dom-set-up}
\subsubsection{Cube tiling}

We tile space using cubes of side length $L_1$, and consider the
minimal union of such cubes that contains the original periodic domain
$\Omega$. This union defines a domain
\begin{equation}
\boxset = \bigcup_{m=1}^{\notopboxes} Q_m,
\end{equation}
where each $Q_m$ is a cube with side length $L_1$ such that $\Omega
\subseteq \boxset$,
and $\notopboxes$ is the total number of such cubes.

If both $L_2$ and $L_3$ are integer multiples of $L_1$, then the
tiling is exact and $\boxset=\Omega$, as in \cref{fig:cube_tiling_exact}. Otherwise, the tiling
necessarily produces \emph{cut cubes}, defined as cubes that are only partially inside $\Omega$, and $\boxset$ is a strict
extension of $\Omega$. In this case, we further pad the domain
by adding one more layer of cubes on each side in the periodic
direction(s) that cannot be tiled exactly, yielding a total number of
$\notopboxesext$ boxes. Denote this padding region by $\Omegapad$,
such that $\Omegaext=\boxset \cup \Omegapad$.
See \cref{fig:cube_tiling_cut} for an illustration, where the two center cubes constitute $\boxset$ and the full domain (four cubes) is $\Omega_{\mathrm{ext}}$.  
In the general case, where neither $L_2$ nor $L_3$ is an integer multiple of $L_1$, we have $\Omegaext=[0, \, L_1] \times
[-L_1, \, L_1(\lceil L_2/L_1 \rceil + 1)] \times [-L_1, \, L_1(\lceil L_3/L_1 \rceil +1)]$.
For an exact tiling, $\Omegapad=\emptyset$, $\Omegaext=\boxset=\Omega$, and $\notopboxesext=\notopboxes$.

\begin{figure}[!t]
\centering
\begin{subfigure}[t]{0.49\textwidth}
\includegraphics[width=0.8\linewidth]{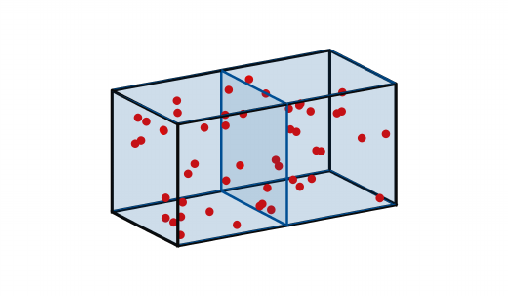}
\caption{Exact tiling: $\Omegaext=\boxset$ and $\Omegapad=\emptyset$.}
\label{fig:cube_tiling_exact}
\end{subfigure}
\begin{subfigure}[t]{0.49\textwidth}
\includegraphics[width=\linewidth]{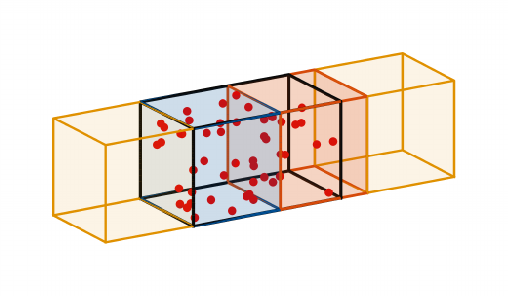}
\caption{Non-exact tiling: $\Omegaext=\boxset\cup\Omegapad$.}
\label{fig:cube_tiling_cut}
\end{subfigure}
\caption{Cube tiling for a triply periodic cuboid. Panel (a) shows and exact tiling, while panel (b) shows a non-exact tiling requiring padding. The blue and red cubes denote the uncut and cut cubes, respectively, and together define $\boxset$. The yellow cubes denote the padded domain $\Omegapad$.}
\label{fig:cube_tiling}
\end{figure}

\subsubsection{Periodicity and source replication}
If $\Omegaext \ne \Omega$, the extension 
$\Omegaext \setminus \Omega$ is populated with periodic images of all sources in $\Omega$ that lie inside $\Omegaext$. See \cref{fig:DMK_extdom} for an illustration. 
In all coordinate directions $i$ for which the tiling is exact (this is always true for $i=1$), $\Omegaext$ is set to be periodic. 
In the remaining coordinate direction(s) it is set to be non-periodic.

\begin{figure}[!t]
\centering
\input{periodic_L1.tikz}
\caption{A sketch of the domain $\Omegaext$ (outer red line) in the $1 \! - \! 2$ plane for evaluation of $\ucompactP{D}$, as defined in \eqref{eq:u_four_partsDP} with $D=2$ or $3$ and $L_2$ not an integer multiple of $L_1$. Sources inside $\Omega$ in black, periodic copies of these sources in blue. Each of the red $\notopboxesext$ cubes will be adaptively refined in an octree structure.}
\label{fig:DMK_extdom}
\end{figure}

\subsubsection{Tree construction on tiled domain}

After tiling and padding, the cuboid $\Omegaext$ is a \emph{polycube} domain, built by cubes of size $L_1$. This is hierarchically subdivided into a \emph{forest of octrees}, where each cube contains an octree, and the neighbor lists on each level include boxes from octrees in neighboring cubes. Periodicity is added to the neighbor lists in the exactly tiled coordinate directions, and level-restriction is enforced on the entire forest.

\subsection{Evaluation of compact contribution}
With the forest of octrees built on $\Omegaext$, where $\Omegaext$ has periodic images of sources inside $\Omega$ in extended directions and periodicity set in the exactly tiled directions, 
 we are ready to compute the compact contribution $\ucompactP{3}$ in \eqref{eq:u_four_partsDP} with small modifications to the standard DMK algorithm. We defer discussion of computation of the top level potential $\ufarP{3}$ to \cref{ss:eval_far_3p}, and here describe the upward and downward passes. The upwards pass (\cref{sec:dmk_upward}) is largely the same, the only change being that proxy points are created in each tree, with the process stopping at the root of each tree.
The downward pass as described in \cref{sec:dmk_downward} is then run in each octree, starting at the tree root with $r_0=L_1$ at $\ell=0$. On each level, the potential is only accumulated in boxes that contain target points, meaning that all boxes in $\Omegapad$ and uncut boxes of $\boxset \setminus \Omega$ at any level $\ell$ are excluded.

\subsection{Evaluation of far-field contribution}\label{ss:eval_far_3p}

Before the compact contribution is evaluated in the downward pass, using the tiling and possible
extension described above, the root-level, 
far-field contribution $\ufarP{3}$ in \eqref{eq:ewald_far_3P} must be computed on the
Chebyshev grids of the $\notopboxes$ top-level boxes in $\boxset$.
These grids are tensor products of the $p$ Chebyshev points $\eta_j$,
$j=1,\ldots,p$, scaled to the interval $L_1[-1/2,1/2]$ and shifted to the box center.
Our goal is thus to evaluate
\begin{align}
  \ufar^\boxind(j_1,j_2,j_3) = \ufarP{3}(\boxcenter+(\eta_{j_1},\eta_{j_2},\eta_{j_3})),
  \label{eq:ufar_chebgrid}
\end{align}
for all $j_i=1,\ldots,p$, $\boxind=1,\ldots,\notopboxes$, where
$\boxcenter$ is the center coordinate for box $\boxind$.

Introduce the structure factor
\begin{align} \chi(\vk)&= 
\frac{1}{|\Omega|} \sum_{\sind=1}^N
e^{-i\vk\cdot \vx_\sind}
  \rho_\sind, \qquad \vk \in \wavenumsettrunc^{3\per}.
\label{eq:structurefacgen}
\end{align}
The Fourier modes are given by the set
\begin{align}
  \wavenumsettrunc^{3\per}
  &:= \left\{ 2\pi
  \left(\frac{\kintx}{L_1}, \frac{\kinty}{L_2}, \frac{\kintz}{L_3}\right)
  : \kint_i \in \mathbb{Z},  \ 
  -\kmaxi \le \kint_i \le \kmaxi, |\kintx|+|\kinty|+|\kintz|\ne 0
  \right\},
  \label{eq:wavenumbers-3p-trunc}
  \\
  \kmaxx&=\lceil L_1 \Kmax/(2\pi) \rceil,
  \quad \kmaxy=\lceil L_2 \Kmax/(2\pi) \rceil,
  \quad \kmaxz=\lceil L_3 \Kmax/(2\pi) \rceil,
\label{eq:kintmax}
\end{align}
where $\Kmax$ is the Fourier bandwidth required to satisfy error
tolerance $\eeps$, given the kernel-split parameter $\decpar_0$.

Furthermore, let
\begin{align}
  S(j,\kbar,L)=e^{i 2\pi \kbar \eta_j/L},
\end{align}
so that \eqref{eq:ufar_chebgrid} can be written as
\begin{equation}
\ufar^\boxind(j_1,j_2,j_3)=\sum_{\kintx=-\kmaxx}^{\kmaxx,*} S(j_1,\kintx,L_1) \sum_{\kinty=-\kmaxy}^{\kmaxy} S(j_2,\kinty,L_2) \sum_{\kintz=-\kmaxz} ^{\kmaxz} S(j_3,\kintz,L_3) 
e^{i \vk \cdot \boxcenter} \Kmollhat(\vk,\decpar) \chi(\vk),
\label{eq:eval_tensor}
\end{equation}
where $*$ denotes that the term $\kintx=\kinty=\kintz=0$ is excluded, in accordance with the definition in \eqref{eq:wavenumbers-3p-trunc}.

Now, in the case that the tiling is exact, i.e.~$\boxset=\Omega$, the set of $\notopboxes$ cubes matches the periodic domain, and in each of these top cubes, we can use the equivalent proxy sources to evaluate the structure factor $\chi(\vk)$. Remember that the location for the proxy sources are the $p^3$ Chebyshev points, and denote the proxy strengths by $\tilde{\rho}^\boxind(j_1,j_2,j_3)$, where $\boxind$ is the cube index.
The structure factor can then be evaluated as
\begin{equation}
\chi(\vk)=\frac{1}{|\Omega|}
\sum_{\boxind=1}^{\notopboxes} e^{-i \vk \cdot \boxcenter}
\sum_{j_3=1}^p \overline{S(j_3,\kint_3,L_3)}
\sum_{j_2=1}^p \overline{S(j_2,\kint_2,L_2)}
\sum_{j_1=1}^p \overline{S(j_1,\kint_1,L_1)}
\, \tilde{\rho}^\boxind(j_1,j_2,j_3),
\label{eq:structurefac}
\end{equation}
Computationally, it is beneficial to use this tensor-product structure of the identical set of (proxy) sources \eqref{eq:structurefac} and targets \eqref{eq:eval_tensor}, as mentioned in \cref{sec:comp-compl}. In the case that the tiling is not exact, proxy sources can however not be used in boxes that are only partially in $\Omega$.
For such boxes we instead use the original, nonuniform source points, but only those that lie inside $\Omega$. The outer sum for the structure factor in \eqref{eq:structurefac} is then over the uncut boxes, with an additional contribution from the cut boxes. To reduce the number of original source points that needs to be summed, we can for each cut box use proxy sources of its children if any of them are uncut, continuing down the tree to fill $\Omega$ with boxes to the extent possible. We then use the original sources in the part of $\Omega$ not covered by the union of these boxes, easily identified as the source points lying in cut leaf boxes. See \cref{fig:top_level_proxy} for an illustration.

\begin{figure}[!t]
\centering
\begin{subfigure}[t]{0.49\textwidth}
\includegraphics[trim={0.45cm 0.0cm 0.75cm 0.0cm},clip,width=\linewidth]{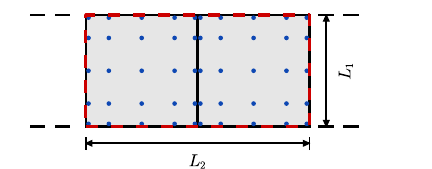}
\caption{$\Omega=\boxset$ (exact tiling)}
\label{fig:top_level_proxy_uncut}
\end{subfigure}
\begin{subfigure}[t]{0.49\textwidth}
\includegraphics[trim={0.0cm 0.0cm 1.2cm 0.0cm},clip,width=\linewidth]{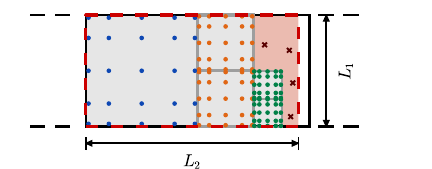}
\caption{$\Omega \subset \boxset$}
\label{fig:top_level_proxy_cut}
\end{subfigure}
\caption{Depending on the domain, the structure factor $\chi(\vk)$ can be evaluated (a) using only proxy sources on the root level, (b) using proxy sources from boxes inside $\Omega$ on different levels in the tree, augmented by original sources in leaf boxes that are only partially inside $\Omega$. 
}
\label{fig:top_level_proxy}
\end{figure}

\begin{remark}
Note that the number of periodic Fourier modes for the far field evaluation typically is  small if aspect ratios of the periodic cuboid $\Omega$ are moderate. 
In the case of a cubic domain, the results in 
\cref{tab:param-table-laplace,tab:param-table-stokes} show that the total number of Fourier modes in each direction ($\Nper_i=2\kmaxgen+1$) is $3$, $5$, $9$ and $11$, for error tolerances of $10^{-3}$, $10^{-6}$, $10^{-9}$ and $10^{-12}$, respectively, for the parameter choices made. This is true for the harmonic kernel (electrostatics) (\cref{tab:param-table-laplace}) and the Stokes kernels (Stokeslet, stresslet, rotlet) (\cref{tab:param-table-stokes}), apart from a lower number of modes for the most strict tolerance for the rotlet. 
For cuboids, the number of modes needed in the two longer directions will be approximately multiplied with $L_2/L_1$ and $L_3/L_1$, see \eqref{eq:kintmax}. As long as these ratios are moderate, so will the number of Fourier modes be. 
\end{remark}

\subsection{Large aspect ratio boxes}\label{ss:large_aspect_ratio}
\label{sec:3P-large-ar}

With $\notopboxes$ top-level boxes, direct evaluation of $\ufarP{3}$ in \eqref{eq:ufar_chebgrid}, using \eqref{eq:eval_tensor}, has cost $O(\notopboxes^2)$. This can become significant for large-aspect-ratio cuboids (remember, for a cubic domain we have $M=1$).
In the case where the ratios $L_2/L_1$ and $L_3/L_1$ are integers, so that the cubical tiling of $\Omega$ is exact, 
we can use \eqref{eq:structurefac} to first evaluate the structure factor, followed by \eqref{eq:eval_tensor} to evaluate the far-field contribution at all proxy points in the top-level cubes. 
Both these computations can be accelerated using FFTs.
We will first consider the case where $L_3/L_1$ is large, while $L_2/L_1$ is moderate, and then move onto the case where both ratios are large.

\subsubsection*{Case 1: elongated cuboid}

Assume for simplicity of notation that $L_2=L_1$ and $L_3=\notopboxes L_1$.  Thus $\Omega$ is tiled by $\notopboxes$ cubes of side length $L_1$, stacked in the $x_3$-direction. The center of cube $m$ is
\begin{equation} \cb_m =
L_1\left(\frac12,\frac12,m-\frac12\right),
\qquad
m=1,\ldots,\notopboxes.
\end{equation}
For fixed $(\kint_1,\kint_2)$, define
\begin{equation}
\beta_m(j_3;\kint_1,\kint_2)
:=
\sum_{j_2=1}^p \overline{S(j_2,\kint_2,L_1)}
\sum_{j_1=1}^p \overline{S(j_1,\kint_1,L_1)}
\,\tilde{\rho}^{m}(j_1,j_2,j_3).
\label{eq:beta_def}
\end{equation}
We have that
\begin{equation}
e^{-i\vk\cdot \cb_m}
=
e^{-i\pi\left(\kint_1+\kint_2+\kint_3/\notopboxes\right)}
e^{-i2\pi \kint_3(m-1)/\notopboxes}.
\label{eq:exp_center_coord}
\end{equation}
Inserting this explicit expression and $\beta_m$ as defined in 
\eqref{eq:beta_def} into \eqref{eq:structurefac}, and swapping the
order of the two outer sums, we get
\begin{equation}
\chi(\vk)
=
\frac{1}{L_1L_2 L_3}
e^{
-i\pi\left(
\kint_1+\kint_2
+\kint_3/\notopboxes
\right)}
\sum_{j_3=1}^p \overline{S(j_3,\kint_3,L_3)}
\sum_{m=1}^{\notopboxes}
e^{-i2\pi \kint_3 (m-1)/\notopboxes}
\, \beta_m(j_3;\kint_1,\kint_2).
\end{equation}
Let $s = \kint_3 \bmod \notopboxes$, and write
$\kint_3=j \notopboxes+s$, $j,s\in \mathbb{Z}$ and $0\le s <
\notopboxes$. Then the exponential in the sum above becomes $e^{-i2\pi s(m-1)/\notopboxes}$.
We now define the one-dimensional (1D) discrete Fourier transform
\begin{equation}
\widehat{\beta}_s(j_3;\kint_1,\kint_2)
:=
\sum_{m=1}^{\notopboxes}
e^{-i2\pi s(m-1)/\notopboxes}
\,\beta_m(j_3;\kint_1,\kint_2),
\qquad
s=0,\ldots,\notopboxes-1.
\label{eq:1d-DFT-beta}
\end{equation}
This is exactly the quantity computed by a length-$\notopboxes$ FFT
over the cube index $m$.
With this, the structure factor can be written as
\begin{equation}
\chi(\vk)
=
\frac{1}{L_1L_2 L_3}
e^{-i\pi\left(\kint_1+\kint_2+\kint_3/\notopboxes\right)}
\sum_{j_3=1}^p
\overline{S(j_3,\kint_3,L_3)}
\,\widehat{\beta}_{(\kint_3 \bmod \notopboxes)}(j_3;\kint_1,\kint_2).
\label{eq:structfac-case1-wDFTCoeffs}
\end{equation}

From \eqref{eq:kintmax}, we have
$
\kmaxx = \kmaxy = O(\Kmax L_1)$
and 
$\kmaxz = O(\Kmax L_3) = O(\notopboxes\,\Kmax L_1)$.
Hence, the number of wave vectors $\vk$ in $\wavenumsettrunc^{3\per}$ scales as
\begin{equation}
O\big(\kmaxx \kmaxy \kmaxz\big) = O(\notopboxes\, \Kmax^3).
\end{equation}
Direct evaluation of $\chi(\vk)$ using \eqref{eq:structurefac} costs
$O(\notopboxes\, p^3)$ for each vector  $\vk\in \wavenumsettrunc^{3\per}$,
and therefore, the total cost is $O(\notopboxes^2\, \Kmax^3\, p^3)$. Exploiting the tensor-product structure, sum factorization reduces this to
\begin{align}
    O\left(\notopboxes\, \Kmax\, p^3 + \notopboxes\, \Kmax^2\, p^2 + \notopboxes^2\, \Kmax^3\, p\right).
    \label{eq:compcompl-sumfact-1long}
\end{align}
With FFT acceleration, we reduce the final term of the above expression. Step-by-step, the procedure is:
\begin{enumerate}[label=(\roman*)]
  \item Form all $\beta_{\boxind}(j_3;\kint_1,\kint_2)$ using sum factorization, at cost
$O(\notopboxes\, \Kmax\, p^3 + \notopboxes\, \Kmax^2\, p^2)$.
  \item Compute the 1D FFT \eqref{eq:1d-DFT-beta} over $m$ for each $(j_3, \kint_1,\kint_2)$, at total cost
$O(\notopboxes\, \log \notopboxes \, \Kmax^2\, p)$.
  \item Compute the final sum  \eqref{eq:structfac-case1-wDFTCoeffs} over $j_3$, at cost   $O(\notopboxes\, \Kmax^3\, p)$.
\end{enumerate}
Thus, the total complexity in forming $\chi(\vk)$ becomes
\begin{equation}
O\left(
\notopboxes
\left( \Kmax\, p^3
+ \Kmax^2\, p^2
+ \Kmax^3\, p \right)
+ \notopboxes\, \log \notopboxes \, \Kmax^2\, p
\right).
\label{eq:compcompl-fft-acc-1long}
\end{equation}
As we assume $\notopboxes$  to be large, the main improvement is that
the cost of summation over boxes in the long direction is reduced from $O(\notopboxes^2)$
to $O(\notopboxes\, \log \notopboxes)$.

Now, in the next step, when we want to evaluate $\ufar^m(j_1,j_2,j_3)$
as given in \eqref{eq:eval_tensor}, we have similarly to above, 
\begin{equation}
e^{i\vk\cdot \cb_m}
=
e^{i\pi\left(\kint_1+\kint_2+\kint_3/\notopboxes\right)}
e^{i2\pi \kint_3(m-1)/\notopboxes}.
\end{equation}
With this, the innermost sum over $\kintz$
can be written as
\begin{equation}
\sum_{s=0}^{\notopboxes-1}
e^{i2\pi s(m-1)/\notopboxes}
\sum_{\substack{\kintz=-\kmaxz\\ \kintz \bmod \notopboxes=s}}^{\kmaxz}
e^{i\pi\kintz/\notopboxes}
S(j_3,\kintz,L_3)
\Kmollhat(\vk,\decpar)\chi(\vk). 
\label{eq:k3_sum_mod}
\end{equation}
Note, that for each $s$, we are summing over a subset of $\kint_3$. In total, we
are still summing over all  $\kint_3$, using all information in $\Kmollhat(\vk,\decpar)\chi(\vk)$.
Considering the full sum, move out the sum over $s$, to obtain
\begin{equation}
\ufar^m(j_1,j_2,j_3)
=
\sum_{s=0}^{\notopboxes-1}
e^{i2\pi s(m-1)/\notopboxes}
G_s(j_1,j_2,j_3),
\label{eq:ufar_inverse_fft}
\end{equation}
where
\begin{equation}
\begin{aligned}
G_s(j_1,j_2,j_3)
&:=
\sum_{\kintx=-\kmaxx}^{\kmaxx,*}
e^{i\pi\kintx} S(j_1,\kintx,L_1)
\sum_{\kinty=-\kmaxy}^{\kmaxy}
e^{i\pi\kinty} S(j_2,\kinty,L_2) \\
&\qquad \times
\sum_{\substack{\kintz=-\kmaxz\\ \kintz \bmod \notopboxes=s}}^{\kmaxz}
e^{i\pi\kintz/\notopboxes}
S(j_3,\kintz,L_3)
\Kmollhat(\vk,\decpar)\chi(\vk),
\qquad s=0,\ldots,\notopboxes-1.
\end{aligned}
\label{eq:G_def_inverse_fft}
\end{equation}
Thus, for each fixed $(j_1,j_2,j_3)$, the values
\begin{equation}
\ufar^m(j_1,j_2,j_3),
\qquad m=1,\ldots,\notopboxes,
\end{equation}
are obtained from the sequence $G_s(j_1,j_2,j_3)$ by a length-$\notopboxes$ inverse FFT over $s$.
Again, the cost of evaluation is reduced from $O(\notopboxes^2)$ to $O(\notopboxes \,\log \notopboxes)$.

Here, we assumed $L_2=L_1$. If $L_2/L_1>1$, only the indexing of the boxes would change, the acceleration in the long direction would remain the same. 

\subsubsection*{Case 2: two long sides}
Assume that
$L_2 = M_2 L_1$, $L_3 = M_3 L_1$, $\notopboxes = M_2 M_3$, 
so that the domain $\Omega$ is tiled by $M_2 \times M_3$ cubes of side length $L_1$. Index the cubes by $(m_2,m_3)$, with $m_2=1,\ldots,M_2$, $m_3=1,\ldots,M_3$, and define their centers by
\begin{equation}
\cb_{m_2,m_3} = L_1\left(\tfrac12,\, m_2-\tfrac12,\, m_3-\tfrac12\right).
\end{equation}
Then the phase factor separates as
\begin{equation}
e^{-i\vk\cdot \cb_{m_2,m_3}}
=
C(\kint_1,\kint_2,\kint_3)
e^{-i2\pi \kint_2 (m_2-1)/M_2}
e^{-i2\pi \kint_3 (m_3-1)/M_3},
\end{equation}
where $C(\kint_1,\kint_2,\kint_3)$ is independent of $(m_2,m_3)$,
and can be explicitly written out, similar to 
\eqref{eq:exp_center_coord}. For fixed $\kint_1$, define
\begin{equation}
\beta_{m_2,m_3}(j_2,j_3;\kint_1)
=
\sum_{j_1=1}^p
\overline{S(j_1,\kint_1,L_1)}
\,\tilde{\rho}^{m_2,m_3}(j_1,j_2,j_3),
\end{equation}
and its 2D discrete Fourier transform
\begin{equation}
\widehat{\beta}_{s_2,s_3}(j_2,j_3;\kint_1)
=
\sum_{m_2=1}^{M_2}\sum_{m_3=1}^{M_3}
e^{-i2\pi s_2 (m_2-1)/M_2}
e^{-i2\pi s_3 (m_3-1)/M_3}
\,\beta_{m_2,m_3}(j_2,j_3;\kint_1),
\end{equation}
with $s_2=\kint_2 \bmod M_2$, $s_3=\kint_3 \bmod M_3$. The structure factor becomes
\begin{equation}
\chi(\vk)
=
\frac{C(\kint_1,\kint_2,\kint_3)}{L_1L_2L_3}
\sum_{j_3=1}^p \overline{S(j_3,\kint_3,L_3)}
\sum_{j_2=1}^p \overline{S(j_2,\kint_2,L_2)}
\,\widehat{\beta}_{\kint_2 \bmod M_2,\ \kint_3 \bmod M_3}(j_2,j_3;\kint_1).
\end{equation}
Direct evaluation for all $\vk \in \wavenumsettrunc^{3\per}$ costs
$O(\notopboxes^2 \Kmax^3 p^3)$. With the 2D FFT, forming all
$\beta$ costs $O(\notopboxes\Kmax p^3)$, computing all FFTs costs
$O(\notopboxes \log \notopboxes \Kmax p^2)$, and assembling all $\chi(\vk)$ costs $O(\notopboxes \Kmax^3 p^2)$, yielding the total complexity
\begin{equation}
O(\notopboxes \Kmax p^3)
+
O(\notopboxes \log \notopboxes \, \Kmax p^2)
+
O(\notopboxes \Kmax^3 p^2).
\label{eq:complexity-2dfft}
\end{equation}

Similarly, we can modify what was done in the previous case to evaluate 
$\ufar^m(j_1,j_2,j_3)$, $m=1,\ldots,\notopboxes$, using 2D IFFTs. 

\section{Extension to reduced periodicity}
\label{sec:reduced_per}
When evaluating $\uP{D}(\x)$ for $D=1,2$, the modifications to the evaluation of $\ucompactP{D}$ are minor compared to the triply periodic case. To see this, consider the definition of $\Omegaext$ in \cref{sec:dom-set-up}. For $D=1$, $L_1$ is the only periodic length, while $L_2$ and $L_3$ are chosen as the smallest integer multiples of $L_1$ such that the domain contain all sources. For $D=2$, $L_3$ is again chosen as the smallest possible integer multiple of $L_1$. If the periodic length $L_2$ is an integer multiple of $L_1$, the tiling is exact and this direction is treated as periodic. Otherwise, the required domain extension and source replication, are performed in this direction, which is then treated as non-periodic. With this construction, the forest of octrees can be built, after which the same evaluation procedure for $\ucompactP{D}$ applies for all $D$, provided that the periodicity is correctly reflected in the nearest-neighbor lists.

Major modifications are, however, needed in the evaluation of $\ufarP{D}$. Unlike the $3\per$ case, $\ufarP{D}$ is not simply given by a discrete Fourier sum, but instead involves Fourier integrals in the free directions, as in \eqref{eq:ewald_far_2P_es}--\eqref{eq:ewald_far_1P_es}.
To enable discretization of these integrals by the trapezoidal rule, thereby retaining the same discrete-sum structure as in the triply periodic case, we employ so-called truncated kernels \cite{Vico2016} and build on the approaches developed in \cite{shamshirgar_fast_2021,bagge_fast_2023}.
To describe this approach, we first adopt a partial differential equation (PDE) perspective to expose the corresponding Green's functions, before introducing the truncated-kernel idea and the resulting discretization. As in the triply periodic case, FFT acceleration can be used when needed for large aspect ratios. This is discussed in the last subsection.

\subsection{The PDE perspective}\label{ss:pde_perspective}
\label{sec:screening}
The field $\uP{D}(\x)$ as defined in \eqref{eq:periodic_sum}, is for the electrostatic case, i.e.~the harmonic kernel, the solution to 
\begin{align}
-\Delta \uP{D}(\x) =  f^{D\per}(\x), \quad f^{D\per}(\x)=\sum_{\v p \in \perindsetD} \sum_{\sind=1}^N \rho_{\sind} \delta (\x-\x_{\sind} + \v p),  \quad \x \in \R^3.
\label{eqn:laplace}
\end{align}
We have introduced the decomposition of the solution as a split of the
kernel. Equivalently, we can view it as a
result of a split of the right hand side.
For electrostatics, we introduce the so-called screening function
$\screen^H(r/\decpar)=\wfunc(r/\decpar)$, a radial function with Fourier transform
$\screenhat^H(k\decpar)=\widehat\wfunc(k\decpar)$, where $\wfunc$ is the window function introduced previously. The screening function, and hence this equality, is specific to the electrostatic problem. This accounts for the seemingly double notation; see \cref{rem:biharm_screening}.
With this, we decompose $f^{D\per}$ into two parts:
\begin{align*}
  f^{D\per}(\x) = \underbrace{f^{D\per}(\x) - (f^{D\per}* \screen^H)(\x)}_{=:
    f^{D\per,R}(\x,\decpar)} + \underbrace{(f^{D\per}* \screen^H)(\x)}_{=:
    f^{D\per,M}(\x,\decpar)}.
\end{align*}
The Poisson equation can be solved for each of the two parts of the 
right-hand side to find $\ulocalP{D}(\x,\decpar)$ and $\ufarP{D}(\x,\decpar)$, which can then be added\footnote{This is the single level split. A multilevel split parallel to \eqref{eq:multilevel_ks} can also be introduced.}. Introduce 
\begin{equation}
f^{D\per,M}(\x,\decpar)=\sum_{\v p \in P_D} f(\vx+\v p,\decpar), \quad f(\vx,\decpar)=\sum_{\sind=1}^N \rho_{\sind} \screen^H(\x-\x_{\sind},\decpar).
\label{eq:sum_of_screenH}
\end{equation} 
For the triply periodic case, we can expand $\ufarP{3}(\x,\decpar)$ in a Fourier series. Using the Poisson summation formula \eqref{eqn:poisson_summation} for
$D=3$ on $f^{3P,M}(\x)$, and discrete orthogonality after inserting into \eqref{eqn:laplace}, we obtain the formula  \eqref{eq:ewald_far_3P_es} as $\screenhat^H(|\k|\decpar)=\widehat\wfunc(|\k|\decpar)$.
The factor $\wh\wfunc(|\vk|\decpar)/|\vk|^2$ is the specific case of $\Kmollhat(\vk,\decpar)=\Khat(\vk) \screenhat(|\vk|\decpar)$.

In the singly periodic case, $D=1$, we expand in a Fourier series in the periodic direction. We introduce the notation $\x=(x,\vr)=(x,y,z)$, $\k=(k_1,\vkappa)=(k_1,\kappa_2,\kappa_3)$, and $\kappa^2=|\vkappa|^2=\kappa_2^2+\kappa_3^2$. We write  
\begin{equation}
\ufarP{1}(\x,\decpar)=\sum_{k_1} \uFourierP_{k_1}(\vr) e^{ik_1 x}.
\label{eq:1Psum_coeffs_of_r}
\end{equation}
Using the Poisson summation formula \eqref{eqn:poisson_summation} for
$f^{1\per,M}(\x,\decpar)$, we get a similar expansion. Inserting into \eqref{eqn:laplace} and using (discrete) orthogonality, we get
\begin{equation}
\left(-\Delta_{2D} +k_1^2\right) \uFourierP_{k_1}(\vr) = \frac{1}{L_1} F_{k_1}(\v r), \quad 
k_1=2\pi \kintx/L_1, \quad \kintx \in \mathbb{Z}, 
\label{eq:2DmodH_k}
\end{equation}
where $\Delta_{2D}$ denotes the two-dimensional Laplacian and 
\begin{equation}
F_{k_1}(\v r)=\int_{\reals} f(\x,\decpar) \, e^{-i k_1 x} \dif x,
\notag
\end{equation}
where $f$ was introduced in \eqref{eq:sum_of_screenH}.
The discrete Fourier modes $\uFourierP_{k_1}(\vr)$ can be represented in terms of a continuous Fourier transform in the non-periodic directions $y$ and $z$, 
\begin{equation}
\uFourierP_{k_1}(\vr) =\frac{1}{(2\pi)^2} \int_{\reals^2} \uHatFourierP_{k_1}(\vkappa)e^{i \vkappa \cdot \vr} \dif\vkappa,
\label{eq:Uk1r_from_Uk1kappa}
\end{equation}
and similarly for $F_{k_1}(\v r)$.
Inserting into \eqref{eq:2DmodH_k}, and using orthogonality, we get 
\begin{equation}
\uHatFourierP_{k_1}(\vkappa)= \frac{1}{k_1^2+\kappa^2} \wh{F}_{k_1}(\vkappa),
\notag
\end{equation}
where $\wh{F}_{k_1}(\vkappa)=\wh{f}((k_1,\vkappa),\decpar)=\wh{f}(\k,\decpar)$, i.e.~the three-dimensional Fourier transform of $f(\x,\decpar)$, hence
\begin{equation}
\uHatFourierP_{k_1}(\vkappa)=\frac{1}{L_1} \frac{1}{k_1^2+\kappa^2} 
\sum_{\sind} \rho_{\sind} \screenhat^H(|\k|\decpar) e^{-i \k \cdot \x_{\sind}}.
\end{equation}
Inserting this into \eqref{eq:Uk1r_from_Uk1kappa}, and further into the sum \eqref{eq:1Psum_coeffs_of_r}, we obtain \eqref{eq:ewald_far_1P_es}. 
We can follow a similar procedure for the doubly periodic case, and arrive at \eqref{eq:ewald_far_2P_es}. In that case, we expand over two periodic directions, and get coefficients $\uFourierP_{k_1,k_2}(z)$ that obey
\begin{equation}
\left(-\frac{d^2}{dz^2} +(k_1^2+k_2^2)\right) \uFourierP_{k_1,k_2}(z) = \frac{1}{L_1 L_2} F_{k_1,k_2}(z), \quad 
k_j=2\pi \kint_j/L_j, \quad \kint_j \in \mathbb{Z},  \quad j=1,2.
\label{eq:1DmodH_k}
\end{equation}

As noted before, the integrands in \eqref{eq:ewald_far_2P_es}--\eqref{eq:ewald_far_1P_es} are singular/sharply peaked for zero/small magnitude discrete wave number(s), and it is not suitable to discretize the integrals with the trapezoidal rule. 
The fact that our coefficients $\uFourierP_{k_1}(\vr)$ and $\uFourierP_{k_1,k_2}(z)$ are solutions to differential equations however allows us to use the technique of truncated Green's functions, to be discussed next. 

\begin{remark}
The screening function $\screen^H$ is specific for the PDE considered. If we instead of \eqref{eqn:laplace} consider the biharmonic equation, $\Delta^2 \uP{D}(\x) =  f^{DP}(\x)$, the harmonic screening function $\screen^H$ cannot be used, as it will not lead to a rapidly decaying local part. In this case we will use the biharmonic screening function $\screen^B$ introduced in \eqref{eq:gammaB_in_phi}. This will be important in our derivations, as while the rotlet can be found by differentiation of the harmonic Green's function, the other two Stokes kernels considered here (the Stokeslet and stresslet) are based on the biharmonic Green's function. 
\label{rem:biharm_screening}
\end{remark}

\subsection{Truncated kernel modification} \label{ss:truncated_kernel_modification}
The differential equations in \eqref{eq:2DmodH_k} and \eqref{eq:1DmodH_k} are free space problems that we write in the form
\begin{align}
\left( -\Delta_{2} +\alpha^2 \right) U(\vr) & = g(\v r), \quad \vr \in \reals^2,
\label{eq:2DmodH}
\\
\left(-
\Delta_{1}+ \alpha^2 \right) U(z) & = g(z), \quad z \in \reals,
\label{eq:1DmodH}
\end{align}
where $\Delta_{2}=\partial_y^2+\partial_z^2$, $\Delta_{1}=d_z^2$, and $\alpha \ge 0$.
The Green's functions $G^{\alpha}_{d}$ for these equations are equal to\footnote{Note, that $d$ here denotes the dimension of the problem, and in our context we will have $d=3-D$, where $D$ is the number of periodic directions.} 
\begin{equation}
G^{\alpha}_{2}(r) = \left\{ 
\begin{array}{cl}
 K_0(\alpha r)/(2\pi),& \alpha \ne 0,\\
-\log(r)/(2\pi), & \alpha = 0,
\end{array}
\right. \quad r=|\vr|,
\label{eq:Green2D}
\end{equation}
where $K_0$ is the incomplete modified Bessel function of the second kind, and
\begin{equation}
G^{\alpha}_{1}(z) = \left\{ 
\begin{array}{cl}
e^{-\alpha |z|}/(2\alpha), & \alpha \ne 0,\\
- |z|/2, & \alpha = 0.
\end{array}
\right. 
\label{eq:Green1D}
\end{equation}
For $\alpha \ne 0$, the free-space Green’s functions are exponentially decaying, and hence solutions satisfy decay conditions at infinity in both 1D and 2D. For $\alpha=0$, there would generally be growth at infinity. In our case, the right hand sides are partial Fourier transforms of the right hand side defined in \eqref{eq:sum_of_screenH}. Under the charge neutrality condition $\sum_{\sind=1}^{N}\rho_{\sind}=0$, these growth terms vanish. The two-dimensional solution (the singly periodic case), again vanishes at infinity, while for the one dimensional solution (doubly periodic case), it will approach a signed constant, determined by the dipole moment in the free direction. See \cite{tornberg2016ewald} for more details. 

\begin{remark}
The limit $\alpha \to 0$ cannot be taken directly in the Green’s functions for $\alpha \ne 0$ in \eqref{eq:Green2D}–\eqref{eq:Green1D} to recover the case $\alpha = 0$. We present these expressions together for notational convenience, but in what follows we treat $\alpha$ as either identically zero or strictly positive, bounded below by the smallest nonzero discrete wave number, as introduced in the previous section. 
For completeness, we note that as $\alpha \to 0$, the modified Helmholtz Green’s function ($\alpha \ne 0$) converges to the harmonic Green’s function ($\alpha = 0$) only up to an additive constant (diverging logarithmically in 2D and linearly in 1D), reflecting the non-uniqueness of the harmonic Green’s function.
\label{rem:alpha_limit}
 \end{remark}

The Green's functions have the Fourier symbol $\widehat{G}^{\alpha}_{d}(\kappa)=(\kappa^2+\alpha^2)^{-1}$, where $\kappa=|\vkappa|$, $\vkappa \in \reals^d$, and will have a singularity at $\kappa=0$ when $\alpha=0$.
The solution to \eqref{eq:2DmodH} can be written as
\begin{equation}
    U(\vr)=\int_{\reals^2} G^{\alpha}_{2}(|\vr-\v y|) g(\v y) \dif\v y = 
    \frac{1}{(2\pi)^2} \int_{\reals^2}\widehat{G}^{\alpha}_{2}(|\vkappa|) \, \widehat{g}(\vkappa) e^{i \vkappa \cdot \vr} \dif\vkappa.
\label{eq:conv_integral}
\end{equation}
Now, assume that the right hand side $g(\v r)$ in
\eqref{eq:2DmodH} is compactly supported in $[0,\tilde{L}_1] \times [0,\tilde{L}_2]$, and that we want to evaluate the solution in that same domain. That makes $\Rtrunc=\sqrt{\tilde{L}_1^2+\tilde{L}_2^2}$ the maximum value of $|\v r -\v y|$ in \eqref{eq:conv_integral}. 
Hence, we would change nothing in the solution by replacing $G^{\alpha}_{2}(\vr)$ with 
\begin{equation}
   G^{\alpha,\Rtrunc}_{2}(r)=G^{\alpha}_{2}(r) \rect\left(\frac{r}{2\Rtrunc}\right), \quad 
   \rect(x)= \left\{ \begin{array}{cl}
   1, & |x| \le 1/2, \\
   0, & |x| > 1/2.
   \end{array} \right.
   \label{eq:Green2DRect}
\end{equation}
To find the Fourier transform, we use \eqref{eq:radial_ft_2D} as the kernel is radial, and get ($\kappa=(\kappa_2^2+\kappa_3^2)^{1/2}$)
\begin{equation}
\widehat{G}^{\alpha,\Rtrunc}_{2}(\kappa) = \left\{ 
\begin{array}{cl}
    \left(
      1 + \kappa \Rtrunc\, J_1(\kappa \Rtrunc)\, K_0(\alpha \Rtrunc)
        - \alpha \Rtrunc\, J_0(\kappa \Rtrunc)\, K_1(\alpha \Rtrunc)
    \right)/(\alpha^2 + \kappa^2),
 & \alpha \ne 0,\\
\left(1 - J_0(\Rtrunc \kappa)\right)/\kappa^2,
    & \alpha = 0, \kappa \ne 0, \\
 \Rtrunc^2/4, & \alpha = 0, \kappa = 0, 
\end{array}
\right. 
\label{eq:GreenR2D_Fourier}
\end{equation}
where the third case is obtained from the second, by taking the limit as $\kappa \rightarrow 0$. The formulas for $\alpha =0$ and $\alpha \ne 0$ can be found in
\cite{bagge_fast_2023} and \cite{palsson_spectrally_2019}\footnote{Note that there is a typo in \cite{palsson_spectrally_2019} in the end formula (32).}, respectively. Here $J_m$ denotes the Bessel function of the first kind and $K_m$ the modified Bessel function of the second kind, both of order $m$.

In the $1D$-case, we simply have $\Rtrunc=\tilde{L}_1$, and we find ($\kappa=|\kappa_3|$)
\begin{equation}
\widehat{G}^{\alpha,\Rtrunc}_{1}(\kappa) = \left\{ 
\begin{array}{cl}
    \left(
      1 - e^{-\alpha \Rtrunc}\cos(\kappa \Rtrunc)
      + \kappa\, e^{-\alpha \Rtrunc}\sin(\kappa \Rtrunc)/\alpha
    \right)/(\alpha^2 + \kappa^2),  & \alpha \ne 0,\\
\left(1-\Rtrunc\kappa\sin(\kappa\Rtrunc) - \cos(\kappa \Rtrunc)\right)/\kappa^2, & \alpha = 0, \kappa \ne 0, \\
-\Rtrunc^2/2, & \alpha = 0, \kappa = 0, 
\end{array}
\right. 
\label{eq:GreenR1D_Fourier}
\end{equation}

The Stokeslet and stresslet kernels of Stokes flow can be obtained by differentiation of the biharmonic Green's function. Truncated versions of these kernels with a Fourier transform bounded at the origin can be found by applying the same differentiation to the truncated biharmonic kernel, as was first done in \cite{AfKlinteberg2016fse}, in the 3D case. 
The truncated biharmonic kernels derived in \cite{Vico2016} for 2D/3D and used in \cite{AfKlinteberg2016fse} for 3D however decay slower in $\kappa$ than the original biharmonic kernels, and we will use the proposed alternate truncated biharmonic kernels derived in \cite[Appendix D]{bagge_fast_2023}. Here, the decay of the original kernels is recovered by exploiting the non-uniqueness of the biharmonic Green's function. The same idea was used for the harmonic kernel, i.e.~$\alpha=0$ in \eqref{eq:GreenR2D_Fourier}, where the stated formula is from \cite{bagge_fast_2023}, removing the $1/\kappa$ term involving $J_1$ that is present in the corresponding formula in \cite{Vico2016}.

In \cite{bagge_fast_2023}, truncated kernels are used in the Spectral Ewald method only for $\alpha=0$, i.e.~in cases where the discrete wave numbers in the periodic direction(s) are zero. As will be discussed in the next section, it is however advantageous to use truncated kernels also for small $\alpha \ne 0$. 
For the biharmonic equation, 
the equations corresponding to \eqref{eq:2DmodH}--\eqref{eq:1DmodH} become 
\begin{align}
\left( -\Delta_{2} +\alpha^2 \right)^2 U(\vr) & = g(\v r), \quad \vr \in \reals^2,
\label{eq:2DmodB}
\\
\left(
-\Delta_{1} 
+ \alpha^2 \right)^2 U(z) & = g(z), \quad z \in \reals, 
\label{eq:1DmodB}
\end{align}
where the right hand side in the equations that we will consider for the far field component (c.f.~\eqref{eq:2DmodH_k} and \eqref{eq:1DmodH_k}) will contain the biharmonic screening function instead of the harmonic screening function, see \cref{rem:biharm_screening}. The Green's functions have the Fourier symbols, 
$\widehat{G}^{\alpha}_{d}(\kappa)=(\kappa^2+\alpha^2)^{-2}$
, where $\kappa=|\vkappa|$, $\vkappa \in \reals^d$, and with an even stronger singularity at $\kappa=0$ when $\alpha=0$. 

In \cref{sec:trunc_biharmonic}, we derive Fourier transforms for the corresponding truncated kernels for $\alpha \ne 0$. The corresponding truncated Stokes kernels are derived in \cref{sec:trunc_Stokes}, where we state also the truncated Stokes kernels for $\alpha=0$ from \cite{bagge_fast_2023}, as well as the final integrals to be discretized.

\subsection{Discretization of Fourier integrals}\label{ss:discretization_fourier_integrals}
The integrals in Fourier space in the non-periodic directions that contain the smooth Fourier transforms of the truncated kernels can be discretized with the trapezoidal rule. In doing so, we obtain the same kind of discrete sums as in the periodic directions. The step size however needs to be set such that the approximation is accurate enough. For the case with $\alpha\ne 0$ (i.e.~$k_1\ne 0$ and $k_1^2+k_2^2\ne 0$ in the singly/doubly periodic cases), we also have the choice to discretize the original Fourier kernel, which can be more efficient for large enough discrete wave number(s). The choice can be made based on resolution requirements, as will be discussed at the end of this subsection. 

To approximate the sum and integrals in \eqref{eq:ewald_far_3P_es}--\eqref{eq:ewald_far_1P_es}, we need to truncate the infinite sum/domain, and will use 
$\kmaxx$, $\kmaxy$, and $\kmaxz$
as defined in \eqref{eq:kintmax}, where we however do not need to round up to the closest integer in the non-periodic directions. 

Given step sizes $\hk_j$, $j=2,3$,
define the set of discretization points
\begin{equation}
\begin{aligned}
   \freesetkP{1}(k_1,\hk_2,\hk_3)=
   \Bigl\{ 
  \Bigl(k_1, & n_2 \hk_2, n_3 \hk_3 \Bigr) : \\ 
  &   n_j=-N_j,\ldots,N_j \ \text{s.t.}
  \ (N_j-1)\hk_j \le \frac{2\pi}{L_j}\kmaxj \le N_j\hk_j, \ j=2,3,
\Bigr\}  
\end{aligned}
\label{eq:k1set-1p}
\end{equation}
and from this, 
\begin{align}
     \wavenumsettrunc^{1\per,0} & :=\freesetkP{1}(0,\hk_2,\hk_3),
     \label{eq:wavenumbers-1p-trunc-0}\\
  \wavenumsettrunc^{1\per,\ne0} & :=\bigcup_{\substack{\kintx=-\kmaxx \\ \kintx \ne 0}}^{\kmaxx} 
  \freesetkP{1}(2\pi \kint_1/L_1,\hk_2,\hk_3),
   \label{eq:wavenumbers-1p-trunc-ne0}
\\
  \wavenumsettrunc^{1\per} & := \wavenumsettrunc^{1\per,0} \, \bigcup \, \wavenumsettrunc^{1\per,\ne0},
  \label{eq:wavenumbers-1p-trunc}
\end{align}
and with the step size $\hk_3$, similarly
\begin{equation}
\begin{aligned}
   \freesetkP{2}(k_1,k_2,\hk_3)=
   \Bigl\{ 
  \Bigl(k_1, & k_2, n_3 \hk_3 \Bigr) : \\ 
  &   n_3=-N_3,\ldots,N_3 \ \text{s.t.}
  \ (N_3-1)\hk_3 \le \frac{2\pi}{L_3}\kmaxz \le N_3\hk_3
\Bigr\},
\end{aligned}
\label{eq:k1k2set-2p}
\end{equation}
and
\begin{align}
     \wavenumsettrunc^{2\per,0} & :=\freesetkP{2}(0,0,\hk_3),
     \label{eq:wavenumbers-2p-trunc-0}\\
\wavenumsettrunc^{2\per,\ne0} & := \bigcup_{\kint_1=-\kmaxx}^{\kmaxx}
\ \bigcup_{\substack{
\kint_2=-\kmaxy\\
(\kint_1,\kint_2)\ne(0,0)
}}^{\kmaxy}
\freesetkP{2}\!\left(
2\pi \kint_1/L_1,2\pi \kint_2/L_2,\hk_3
\right),
 \label{eq:wavenumbers-2p-trunc-ne0}
\\
  \wavenumsettrunc^{2\per} & := \wavenumsettrunc^{2\per,0} \, \bigcup \, \wavenumsettrunc^{2\per,\ne0}.
  \label{eq:wavenumbers-2p-trunc}
\end{align}

Using what was discussed in \cref{sec:screening}, below \eqref{eq:sum_of_screenH},
our general original mollified kernel (without truncation) can be written $\Kmollhat(\vk,\decpar)=\Khat(\vk) \screenhat(|\vk|\decpar)$, where $\screenhat=\screenhat^H$ or $\screenhat=\screenhat^B$. With truncation, the  $\Khat(\vk)$ factor will be modified, and we will introduce the generic notation $\genericGhat(\vk)$. 
Hence, the discretized versions of 
\eqref{eq:ewald_far_3P_es}--\eqref{eq:ewald_far_1P_es}, can for a general (possibly truncated) kernel $\genericGhat(\vk)$ and screening function $\screenhat$ be written 
 \begin{equation}
\ufarP{D}(\vx_\tind,\decpar)
= C^{D\per} \sum_{\sind=1}^N  \sum_{\vk\in\wavenumsettrunc^{D\per}} 
\genericGhat(\vk) \screenhat(|\vk|\decpar) 
e^{i\vk\cdot(\vx_\tind-\vx_\sind)}
  \rho_\sind,
\label{eq:ewald_far_allP_discr}
\end{equation}
where $\wavenumsettrunc^{3\per}$ is defined in \eqref{eq:wavenumbers-3p-trunc} in addition to $\wavenumsettrunc^{1\per}$ and $\wavenumsettrunc^{2\per}$ above, and 

 \begin{equation}
 \begin{aligned}
 C^{3\per}& = 1/(L_1 L_2 L_3), \\
C^{2\per}(\hk_3)&= \hk_3 /(L_1 L_2 \cdot 2 \pi),  \\
C^{1\per}(\hk_2,\hk_3)& = \hk_2 \hk_3/(L_1 \cdot (2 \pi)^2),
 \end{aligned}
 \end{equation}
where we have suppressed the argument of $C^{D\per}$ in \eqref{eq:ewald_far_allP_discr}. 

For electrostatics, in the doubly periodic case ($D=2)$, when using the truncated kernel also for $\sqrt{k_1^2+k_2^2}>0$, the specific formula for the discrete approximation of \eqref{eq:ewald_far_2P_es} is 
\begin{equation}
\ufarP{2}(\vx_\tind,\decpar) 
= C^{2\per}(\hk_3) \sum_{\sind=1}^N  \sum_{\substack{
\vk\in\wavenumsettrunc^{2\per}\\
\vk=(k_1,k_2,k_3)}}
\widehat{G}^{\sqrt{k_1^2+k_2^2},\Rtrunc}_{1}(|k_3|) \ \screenhat^H(|\vk| \decpar) \,  e^{i\vk\cdot(\vx_\tind-\vx_\sind)}
  \rho_\sind,
\label{eq:ewald_far_es_2P_discr_a}
\end{equation}
where $\widehat{G}^{\alpha,\Rtrunc}_{1}(\kappa)$ is defined in
\eqref{eq:GreenR1D_Fourier}\footnote{Note, $\screenhat^H=\widehat\wfunc$. The Fourier symbols for the Stokeslet and the stresslet will instead involve the biharmonic screening function 
$\screenhat^B$ in \eqref{eq:gammaB_in_phi}, $\screenhat^B\ne \widehat\wfunc$, see also 
\cref{rem:biharm_screening}.}.
In the singly periodic case, if only using the truncated kernel for $k_1=2\pi \kintx/L_1=0$, and the original harmonic kernel for the non-zero $k_1$ modes, we get 
\begin{equation}
\begin{aligned}
\ufarP{1}(\vx_\tind,\decpar) 
= \sum_{\sind=1}^N  
\ \Bigl( \, 
C^{1\per}(\hk_2^0,\hk_3^0) \sum_{\substack{
\vk\in\wavenumsettrunc^{1\per,0}\\
\vk=(k_1,k_2,k_3)}} &
\
\widehat{G}^{0,\Rtrunc}_{2}\left(\sqrt{k_1^2+k_2^2}\, \right) \ \screenhat^H(|\vk| \decpar) \,  e^{i\vk\cdot(\vx_\tind-\vx_\sind)} \rho_\sind 
\\ 
 &+C^{1\per}(\hk_2,\hk_3) \sum_{\substack{
\vk\in\wavenumsettrunc^{1\per,\ne 0} \\ \vk=(k_1,k_2,k_3)}}
\frac{\screenhat^H(|\vk| \decpar)}{|\vk|^2} \,  e^{i\vk\cdot(\vx_\tind-\vx_\sind)} \rho_\sind
 \Bigr),
\end{aligned}
\label{eq:ewald_far_es_1P_discr_b}
\end{equation}
where $\widehat{G}^{\alpha,\Rtrunc}_{2}(\kappa)$ is defined in
\eqref{eq:GreenR2D_Fourier}.
The $\hk_2^0,\hk_3^0$ as argument to $C^{1\per}$ in the first part is to indicate that different step sizes are typically needed for the two parts. 
The expressions for using the truncated kernel only for $k_1=k_2=0$ in the doubly periodic case, and for using the truncated kernel for all $k_1$ in the singly periodic case follows naturally. 

The truncation radius $\Rtrunc$ in the truncated Fourier symbols \eqref{eq:GreenR2D_Fourier}--\eqref{eq:GreenR1D_Fourier} as used in \eqref{eq:ewald_far_es_2P_discr_a}--\eqref{eq:ewald_far_es_1P_discr_b} should be chosen as small as possible, as frequency of oscillations increase with $\Rtrunc$. 
Hence, we set $\Rtrunc$ equal to the maximum value of $|\v r -\v y|$ in \eqref{eq:conv_integral}. 
The right hand side in \eqref{eq:2DmodH_k} has support in a box of size 
$[-\decpar,L_2+\decpar]\times [-\decpar,L_3+\decpar]$ given by the compact support and locations of the superimposed $\screen^H(|\x|/\decpar)$
in \eqref{eq:sum_of_screenH} in the free directions.
The target points in this plane are inside $[0,L_2]\times [0,L_3]$, and hence we pick $\Rtrunc=\sqrt{(L_2+\decpar)^2+(L_3+\decpar)^2}$. In the doubly periodic case, we have a one-dimensional problem and $\Rtrunc=L_3+\decpar$.
Compared to the original integrals in \eqref{eq:ewald_far_2P_es}--\eqref{eq:ewald_far_1P_es}, the ratio of the 
highest frequency to resolve for the truncated kernel relative to the original kernel is 
$\Rtrunc/L_j$ in the non-periodic direction $j$. 
A minimal resolution requirement will be determined by the Nyquist sampling criteria, and we will need 
$\hk_j<\pi/L_j$ and $\hk_j<\pi/\Rtrunc$, for the original and truncated kernels respectively.\footnote{ See \cite[\href{https://dlmf.nist.gov/10.17.E3}{Equation 10.17.3}]{NIST:DLMF}.}
However, a much smaller step size can be needed to resolve the factor $(\alpha^2+\kappa^2)^{-1}$ in the original mollified kernel. Here
\begin{equation}
\alpha = 
\begin{cases}
\sqrt{k_1^2+k_2^2}, & D=2,\\
|k_1|,& D=1,
\end{cases}
\label{eq:alpha}
\end{equation}
and the most restrictive nonzero mode is
\begin{equation}
\alphamin = 
\begin{cases}
2\pi/\max\{L_1,L_2\}, & D=2,\\
2\pi/L_1,& D=1.
\end{cases}
\label{eq:alphamin}
\end{equation}
\Cref{sec:error_estimates} gives trapezoidal-rule error estimates for these integrals. We evaluate these estimates at $\alpha=\alphamin$ with the step size set by the Nyquist criterion for the original kernel. If the estimated error exceeds the requested tolerance, then the truncated kernel will be used for all $\alpha\neq 0$.

\begin{remark}
In our current implementation, we use either the truncated or the original kernel for all $\alpha \ne 0$. 
We have not considered a switch from truncated to original once the discrete wave number is large enough.
The effect is expected to be quite small, and the computational cost of the far field $\ufarP{D}$ is in turn subdominant to the evaluation of $\ucompactP{D}$, as will be shown in the numerical results section.
\label{rem:alpha_switch}
\end{remark}

\subsection{Large aspect ratio boxes}
\label{sec:lessP-large-ar}

In \cref{sec:3P-large-ar}, we discussed the possible acceleration of the far-field contribution in the triply periodic case by the use of FFTs. In the doubly periodic case, with one periodic length much longer than the other, we can accelerate the evaluation in the long direction as discussed in what was called {\em Case 1}. There is however also a possible case with short periodic direction(s) and one or two long non-periodic (free) directions. In the free directions, there is no exact length that must be respected, the domain simply has to be large enough to enclose all source and target points, and there is no loss of generality to assume that the length is an integer multiple of $L_1$. 

Now, assume that $L_3/L_1$ is large, with $L_2/L_1$ small or moderate. Assuming $L_2=L_1$ for simplicity, this is like {\em Case 1} in \cref{sec:3P-large-ar}. The difference is the definition of the wave number sets $\wavenumsettrunc^{1\per}$  \eqref{eq:wavenumbers-1p-trunc}, and $\wavenumsettrunc^{2\per}$ \eqref{eq:wavenumbers-2p-trunc} where the free directions are sampled differently. 
To define the 2D discrete Fourier transform in \eqref{eq:1d-DFT-beta}, we used the fact that $k_3=2\pi\kint_3/L_3$ where $\kint_3$
is an integer and amenable for the rewrite 
$\kint_3=j \notopboxes+s$, $j,s\in \mathbb{Z}$ and $0\le s < \notopboxes$. Here, we instead need to set $\hk_3$ in order to resolve the integral. 
However, if we define\footnote{Remember, the FFT acceleration will only be done if $\notopboxes$ is large.}
\begin{equation}
\hk_3=2\pi /(L_3 q)=2\pi/(qML_1),
\label{eq:kappaqdef}
\end{equation}
with $q$ such that $\hk_3$ is small enough, and $q \notopboxes \in \mathbb{N}$, we can still use the FFT, albeit with some upsampling. 

We need to evaluate $\chi(\vk)$ for all $\k\in 
\wavenumsettrunc^{2\per}$, that we write on the form 
$\k=(2\pi \kint_1/L_1,2\pi \kint_2/L_2,n_3 \hk_3)$, where $\kint_1,\kint_2,n_3\in \mathbb{Z}$. 
Let $s = n_3 \bmod q\notopboxes$. Here is where we use $q\notopboxes\in\mathbb{N}$, so that this modulo operation is well-defined. Consider the $2\per$ case and define
\begin{equation}
\widehat{\beta}^{(q)}_s(j_3;\kint_1,\kint_2)
:=
\sum_{m=1}^{\notopboxes}
e^{-i2\pi s(m-1)/(q\notopboxes)}
\,\beta_m(j_3;\kint_1,\kint_2),
\qquad
s=0,\ldots,q\notopboxes-1.
\label{eq:beta_hat_oversampled}
\end{equation}
This is obtained by zero-padding
\(\{\beta_m(j_3;\kint_1,\kint_2)\}_{m=1}^{\notopboxes}\)
(as defined in \eqref{eq:beta_def}) to length \(q\notopboxes\) and applying a length-\(q\notopboxes\) FFT.
With this notation, \eqref{eq:structfac-case1-wDFTCoeffs} becomes
\begin{equation}
\chi(\vk)
=
C^{D\per}
e^{-i\pi\left(\kint_1+\kint_2+n_3/(q\notopboxes)\right)}
\sum_{j_3=1}^p
e^{-in_3 \hk_3 \eta_{j_3}}
\,\widehat{\beta}^{(q)}_{n_3 \bmod q\notopboxes}
(j_3;\kint_1,\kint_2).
\label{eq:structurefac_oversampled_fft}
\end{equation}
The similarity with the non-oversampled case is that the FFT is still taken only
over the cube index $m$. The difference is that the FFT length is enlarged
from $\notopboxes$ to $q\notopboxes$, and the Fourier index is now
$(n_3 \bmod q\notopboxes)$ instead of $(\kint_3 \bmod \notopboxes)$. A similar oversampling is used when using the IFFT as corresponding to \eqref{eq:ufar_inverse_fft}.
In the $1\per$ case, the second direction is also free, and we can no longer use $\kint_2$ in the expressions above. This does however not affect the FFT treatment in the long direction, and we do not explicitly write these formulas. 

If we have one short periodic direction and two long non-periodic ones, we will similarly modify {\em Case 2}, and use $2D$ FFT/IFFTs with upsampling. 

\section{Numerical results}\label{s:numerical_results}

We have implemented the DMK algorithm with singly, doubly and triply periodic boundary conditions for the Laplace and Stokes kernels in MATLAB. The code \cite{DMK_matlab} is intended for experimentation and algorithmic development, and has not been optimized for runtime. All experiments were run on an AMD EPYC 9655P 96-core CPU. Reported timings exclude construction of the adaptive octree(s) and, unless otherwise stated, are measured as the median over 10 repetitions. In our implementation, the subdivision parameter $n_s$, defined as the maximum number of points in a leaf box, is set to 250 in the reported runs. 
This parameter affects efficiency, but not accuracy.

The numerical experiments are organized as follows. 
We first describe the parameter-selection procedure and verify that it delivers the requested accuracy. We then assess the cost of periodization in two steps: first on the unit cube, where periodic DMK is compared with the corresponding free-space DMK evaluation, and then on rectangular cuboids, where we examine how the cost of the periodic part depends on the geometry of the primary cell. Together, these experiments quantify both the overhead introduced by the periodization scheme and its dependence on the cell geometry.
All experiments in this section use the Laplace kernel, except the final one, where we demonstrate the method for a singly periodic Stokeslet problem. Additional results for the Stokes kernels are provided in \cref{sec:stokes_results}.

In the experiments below the primary cell has side lengths $L=[L_1,L_2,L_3]$. We consider singly periodic problems, with periodicity in the first coordinate direction; doubly periodic problems, with periodicity in the first two coordinate directions; and triply periodic problems, with periodicity in all three coordinate directions. These cases are denoted by $D=1$, $D=2$, and $D=3$, respectively.

\subsection{Error control and parameter selection}\label{ss:error_control_param_selection}
The parameter choices in DMK are driven by the prescribed tolerance $\eeps$. We consider four such parameters: the Chebyshev order $p$, the Fourier cutoff $\Kmax$ associated with the kernel split, the difference-kernel Fourier grid size $\NF$, and the Fourier discretization sizes used in the far-field evaluation. The order $p$ controls the accuracy of the tensor-product Chebyshev proxy representation in the upward and downward passes, while $\Kmax$ determines the Fourier support needed to resolve the mollified and difference kernels. For PSWF-based kernel splits, $\Kmax=c$, where $c$ is the PSWF bandwidth introduced in \cref{sec:window_functions}. The quantity $\NF$ denotes the number of Fourier grid points per coordinate direction used for the difference-kernel translations, so that the tensor-product grid has $\NF^3$ points. The far-field Fourier discretization depends on the periodicity and on the geometry of the primary cell. Periodic directions lead to Fourier series, whereas free directions require quadrature of Fourier integrals; see \eqref{eq:ewald_far_3P_es}--\eqref{eq:ewald_far_1P_es}. We denote the corresponding mode and quadrature counts by $\Nper_i$ and $\Nfree_i$, respectively, and write $\NFP{D}$ for the total number of Fourier discretization points in the $D$-periodic far-field evaluation. For example, in the doubly periodic case with the first two directions periodic, $\NFP{2}=\Nper_1\Nper_2\Nfree_3$. This quantity should be distinguished from $\NF$, which denotes the fixed Fourier grid size used for the difference-kernel translations.

The parameters are selected as follows. 
First, $\Kmax$ and $p$ is chosen based on $\eeps$. 
For the PSWF split, this means choosing the bandlimit $c=\Kmax$ large enough that the truncation error of the screening function is below $\eeps$.
Given this kernel split, the order $p$ is then chosen so that the level-zero difference kernel $\kd_0$ is interpolated to accuracy $\eeps$.
For the Stokes kernels, we use the automatic parameter-selection procedure of \cite[Section 7]{StokesDMK2026}, which chooses $c$ and $p$ so that the relative $\ell_2$ error of DMK is below $\eeps$.
Once $\Kmax$ has been fixed, the difference-kernel Fourier grid is set by $\NF=2\lceil3\Kmax/\pi\rceil-1$, or equivalently $n_f=\lceil3\Kmax/\pi\rceil-1$. This differs from the rule in \cref{ss:eval_residual_difference} only by the omission of the outermost endpoint modes, which can be safely discarded for the PSWF-based split.
Finally, the far-field discretization is selected. In the triply periodic case, the retained modes are those in $\wavenumsettrunc^{3\per}$, defined in \eqref{eq:wavenumbers-3p-trunc}, with cutoffs chosen as in \eqref{eq:kintmax}, so that $\Nper_i=2\kint_i^{\mathrm{max}}+1$. For reduced periodicity, the periodic directions are treated in the same way, while the free directions are discretized by quadrature; see \cref{ss:discretization_fourier_integrals}. 
The zero mode in the periodic variables is always evaluated using the truncated Green's function. For nonzero periodic modes, we choose between the original and truncated Fourier symbols using the Nyquist criteria and the error estimates in \cref{sec:error_estimates}, as described at the end of \cref{ss:truncated_kernel_modification}. 
When a truncated symbol is used, the Fourier-grid spacing is set by the resolution requirement associated with the truncation radius $\Rtrunc$; see  \cref{ss:discretization_fourier_integrals}.

\cref{tab:param-table-laplace} lists the selected parameters for the Laplace kernel on the unit cube, and \cref{tab:param-table-stokes} shows the corresponding parameters used for the Stokes kernels. For fixed $\eeps$, the order $p$, bandwidth $c$, and difference-kernel grid size $\NF$ are independent of periodicity, while the far-field discretization changes with the number of free directions. In particular, each free direction replaces a periodic mode count $\Nper_i$ by a quadrature count $\Nfree_i$, which is larger because it comes from discretizing a Fourier integral. To illustrate the corresponding geometry dependence, \cref{fig:heatmap_fourier_discretization} shows how the total number of Fourier discretization points $\NFP{D}$ changes for rectangular cells $[1,L_2,L_3]$. As expected, $\NFP{D}$ generally increases with the side lengths and with the number of free directions. The visible jumps in \cref{fig:heatmap_fourier_discretization_1P} occur where the parameter selection switches from the original Fourier symbol to the truncated symbol for nonzero modes in the periodic variable.

We now verify the accuracy obtained with these parameter choices for a number of different test cases. \cref{tab:laplace_error} reports the results for the Laplace kernel, and \cref{tab:stokes-tolerance} gives the corresponding results for the Stokes kernels. Across the tested kernels, periodicities, and domains, the measured relative $\ell_2$ errors remain below the requested tolerance, typically by only a modest margin.

\begin{figure}[!t]
\centering
\begin{subfigure}[t]{0.33\textwidth}
\includegraphics[width=\linewidth]{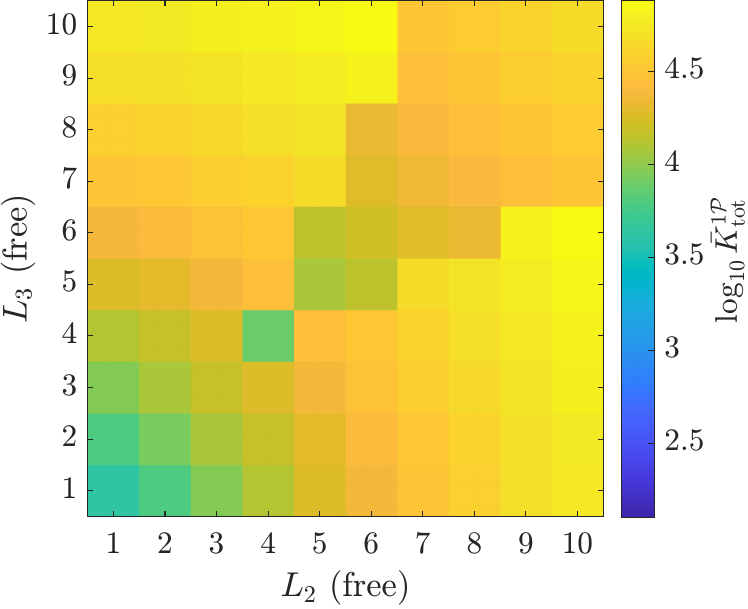}
\caption{$\NFP{1}=\Nper_1\Nfree_2\Nfree_3$}
\label{fig:heatmap_fourier_discretization_1P}
\end{subfigure}
\begin{subfigure}[t]{0.33\textwidth}
\includegraphics[width=\linewidth]{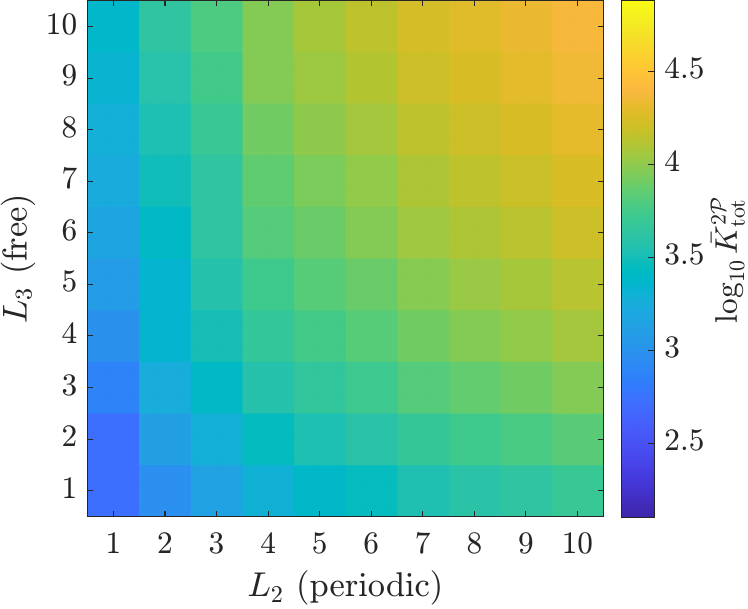}
\caption{$\NFP{2}=\Nper_1\Nper_2\Nfree_3$}
\label{fig:heatmap_fourier_discretization_2P}
\end{subfigure}%
\begin{subfigure}[t]{0.33\textwidth}
\includegraphics[width=\linewidth]{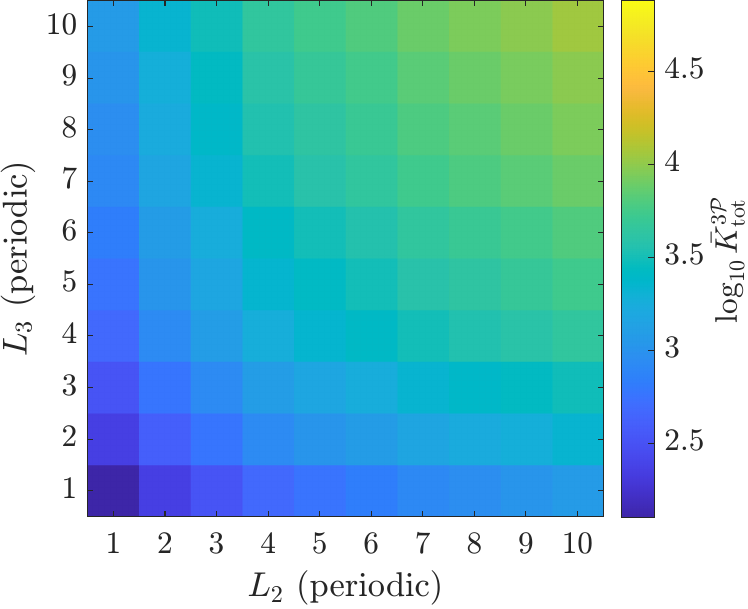}
\caption{$\NFP{3}=\Nper_1\Nper_2\Nper_3$}
\label{fig:heatmap_fourier_discretization_3P}
\end{subfigure}
\caption{Panels (a), (b), and (c) show the total number of Fourier discretization points $\NFP{D}$, plotted on a $\log_{10}$ scale, for the singly, doubly, and triply periodic Laplace kernels, respectively. The domain is $L=[1,L_2,L_3]$, with integer $L_2$ and $L_3$, and $\eeps=10^{-6}$.}
\label{fig:heatmap_fourier_discretization}
\end{figure}

\begin{table}[tb]
\centering
\caption{Parameters selected for the Laplace kernel on the unit cube for requested tolerance $\eeps$ and periodicity $D\per$.}
\label{tab:param-table-laplace}
\begin{tabular}{ll|llllllllll}
\hline
$D\per$ & $\varepsilon$ & $c$ & $p$ & $\NF$ & $\Nper_1$ & $\Nper_2$ & $\Nper_3$ & $\Nfree_1$ & $\Nfree_2$ & $\Nfree_3$ & $\Rtrunc$ \\
\hline
$3\per$ & $10^{-3}$ & 7.00 & 12 & 13 & 3 & 3 & 3 & -- & -- & -- & -- \\
$3\per$ & $10^{-6}$ & 14.50 & 20 & 27 & 5 & 5 & 5 & -- & -- & -- & -- \\
$3\per$ & $10^{-9}$ & 21.50 & 30 & 41 & 7 & 7 & 7 & -- & -- & -- & -- \\
$3\per$ & $10^{-12}$ & 29.00 & 38 & 55 & 9 & 9 & 9 & -- & -- & -- & -- \\
\hline
$2\per$ & $10^{-3}$ & 7.00 & 12 & 13 & 3 & 3 & -- & -- & -- & 7 & 2.00 \\
$2\per$ & $10^{-6}$ & 14.50 & 20 & 27 & 5 & 5 & -- & -- & -- & 21 & 2.00 \\
$2\per$ & $10^{-9}$ & 21.50 & 30 & 41 & 7 & 7 & -- & -- & -- & 29 & 2.00 \\
$2\per$ & $10^{-12}$ & 29.00 & 38 & 55 & 9 & 9 & -- & -- & -- & 39 & 2.00 \\
\hline
$1\per$ & $10^{-3}$ & 7.00 & 12 & 13 & 3 & -- & -- & -- & 7 & 7 & 2.83 \\
$1\per$ & $10^{-6}$ & 14.50 & 20 & 27 & 5 & -- & -- & -- & 29 & 29 & 2.83 \\
$1\per$ & $10^{-9}$ & 21.50 & 30 & 41 & 7 & -- & -- & -- & 41 & 41 & 2.83 \\
$1\per$ & $10^{-12}$ & 29.00 & 38 & 55 & 9 & -- & -- & -- & 55 & 55 & 2.83 \\
\hline
\end{tabular}
\end{table}

\begin{table}[tb]
\centering
\caption{Achieved relative $\ell_2$ errors for the Laplace kernel for different periodicities, domains, and requested tolerances $\eeps$. The total number of sources and targets is $N=1000$, with half of the points uniformly distributed in the domain, and the other half clustered near the center of one top-level cube.}
\label{tab:laplace_error}
\begin{tabular}{ll|llll}
\hline
$D\per$ & $[L_1,L_2,L_3]$ & $\varepsilon=10^{-3}$ & $\varepsilon=10^{-6}$ & $\varepsilon=10^{-9}$ & $\varepsilon=10^{-12}$ \\
\hline
3$\per$ & $[2.0,1.0,1.0]$ & $2.3{\times}10^{-4}$ & $1.0{\times}10^{-7}$ & $8.6{\times}10^{-11}$ & $8.8{\times}10^{-13}$ \\
3$\per$ & $[1.0,1.3,1.7]$ & $4.1{\times}10^{-4}$ & $2.0{\times}10^{-7}$ & $8.5{\times}10^{-11}$ & $8.9{\times}10^{-13}$ \\
\hline
2$\per$ & $[3.0,1.0,2.0]$ & $2.0{\times}10^{-3}$ & $1.6{\times}10^{-7}$ & $3.8{\times}10^{-10}$ & $1.7{\times}10^{-12}$ \\
2$\per$ & $[1.0,5.0,2.0]$ & $3.7{\times}10^{-4}$ & $6.8{\times}10^{-8}$ & $8.7{\times}10^{-11}$ & $4.9{\times}10^{-13}$ \\
\hline
1$\per$ & $[3.0,1.0,2.0]$ & $8.2{\times}10^{-5}$ & $5.9{\times}10^{-8}$ & $7.1{\times}10^{-11}$ & $1.1{\times}10^{-12}$ \\
1$\per$ & $[1.0,1.0,2.0]$ & $1.7{\times}10^{-4}$ & $1.4{\times}10^{-7}$ & $6.6{\times}10^{-11}$ & $8.8{\times}10^{-13}$ \\
\hline
\end{tabular}
\end{table}

\begin{figure}[!t]
\centering
\includegraphics[width=0.49\linewidth]{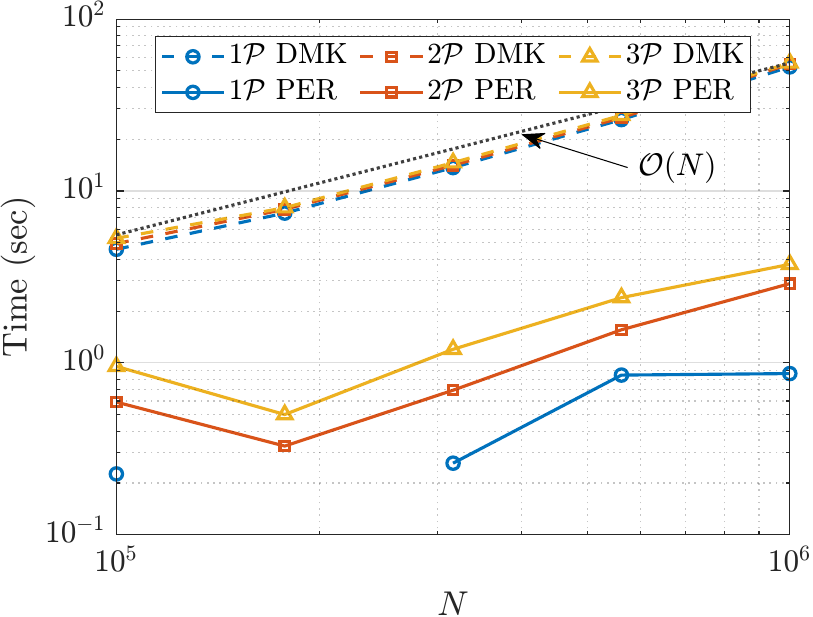}
\caption{Linear scaling of periodic DMK for the Laplace kernel on the unit cube. The figure shows the runtime of the full $D\per$ DMK evaluation and the difference between the full $D\per$ runtime and the corresponding free-space runtime, denoted  $D\per$ PER. The sources and targets are uniformly distributed, and $\eeps=10^{-6}$. The data point for $1\per$ PER and $N=177\,828$ is omitted because this case was faster than the corresponding free-space problem.}
\label{fig:cube_periodizing_scaling}
\end{figure}

\subsection{Cost of periodization and aspect ratio dependence}\label{ss:cost_of_periodization}
We first demonstrate the linear scaling of our new periodic DMK scheme on the unit cube. We consider $N$ uniformly randomly distributed sources (targets coincide with sources) and set $\eeps=10^{-6}$. \cref{fig:cube_periodizing_scaling} shows timing results for the Laplace kernel. Also plotted is the periodization overhead $D\per$ PER, which is the difference between the runtimes of the full periodic DMK evaluation, and the corresponding free-space DMK evaluation.
 We observe linear scaling in $N$ for both the full periodic method and $D\per$ PER, and note that the periodization overhead remains a small fraction of the full runtime.

We next examine the effect of rectangular geometry. For all examples in this subsection, the sources are uniformly randomly distributed.
We set $L=[1,L_2,L_3]$, with $L_2,\,L_3\in\{1,2,3,4,5\}$, and take $N=L_2L_3\Ncube$, where $\Ncube$ is the number of sources in the unit cube, i.e.~the point density is kept constant. We define the \emph{normalized runtime} by $(t(L)/N)/(t([1,1,1]/\Ncube)$, where $t(L)$ is the runtime on the cell $L$. Thus values close to one indicate a linear scaling with $N$, with little dependence on the rectangular geometry. \cref{fig:vary_L2_L3_scatter} shows the normalized runtime results for $\Ncube=10\,000$ and $\eeps=10^{-6}$. We observe that all tested configurations lie within $[0.9,1.1]$.

\begin{figure}[!t]
\centering
\includegraphics[width=0.49\linewidth]{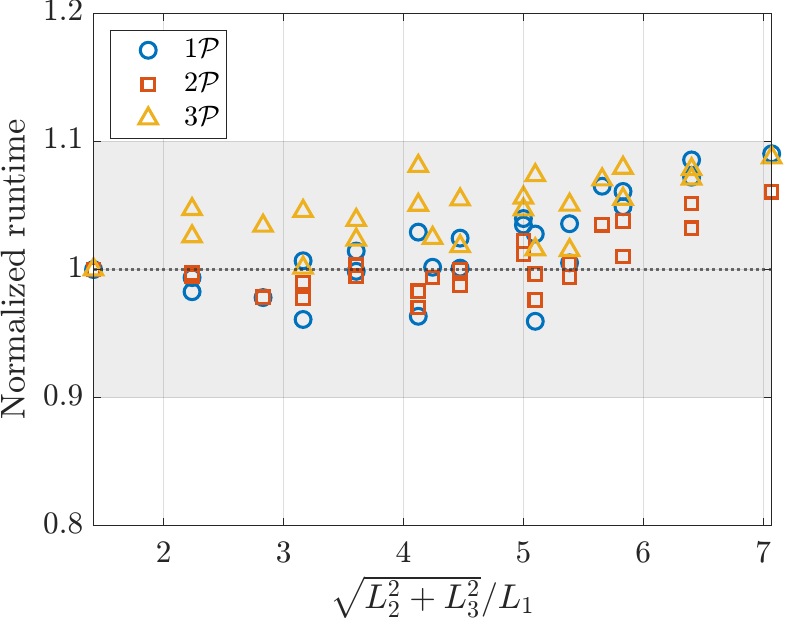}
\caption{Normalized runtime for rectangular cells $L=[1,L_2,L_3]$. 
The point density is fixed, with $\Ncube=10\,000$ point in the unit cube, and $\eeps=10^{-6}$.}
\label{fig:vary_L2_L3_scatter}
\end{figure}

We next examine triply periodic rectangular cells in more detail by separating the compact and far-field contributions, $\ucompactP{3}$ and $\ufarP{3}$, respectively, defined in \eqref{eq:u_four_partsDP}. We consider $L=[1,1,L_3]$, allow $L_3$ to be non-integer, and keep the point density fixed, so that $N=L_3\Ncube$. The non-integer case allows us to examine the domain extension and the reuse of proxy representations at different levels of the octree hierarchy, as described in  \cref{sec:3P}. \cref{fig:lap_3p_moderate_aspect_ratio} shows runtimes for $\Ncube=50\,000$ and $\eeps=10^{-6}$. From \cref{fig:lap_3p_moderate_aspect_ratio_timing} we can note that the computational cost for the compact part grows linearly with $L_3$, and hence with $N$, and that the far-field part is negligible in comparison. \cref{fig:lap_3p_moderate_aspect_ratio_overhead} shows the corresponding self-normalized runtimes, i.e.~with $\ucompactP{3}$ and $\ufarP{3}$ normalized by $\ucompactP{3}$ and $\ufarP{3}$ for the unit cube, respectively. The self-normalized runtime of the far-field component clearly displays the dependence on the aspect ratio, and the runtime decreases significantly when the top-level tiling is exact, i.e.~when $L_3$ is an integer. In this case the geometry is as in \cref{fig:top_level_proxy_uncut}, and the tensor-product structure of the proxy grid on the top-level of the octrees can be used to compute the structure factor \eqref{eq:structurefac}. The smaller dips, e.g.~at $L_3=2.5$ and $L_3=2.75$, have a similar origin, 
and occur because a full proxy point structure is available when using also boxes at finer level(s) of the tree. In the general case, as depicted in \cref{fig:top_level_proxy_cut}, additional original source points must be included in the evaluation. In either case, the computational cost for the far field component $\ufarP{3}$ constitutes only a small fraction of the total runtime. 

%\begin{figure}[!t]
\begin{figure}[p]
\centering
%\begin{subfigure}[t]{0.49\textwidth}
\begin{subfigure}[t]{0.33\textwidth}
\includegraphics[width=\linewidth]{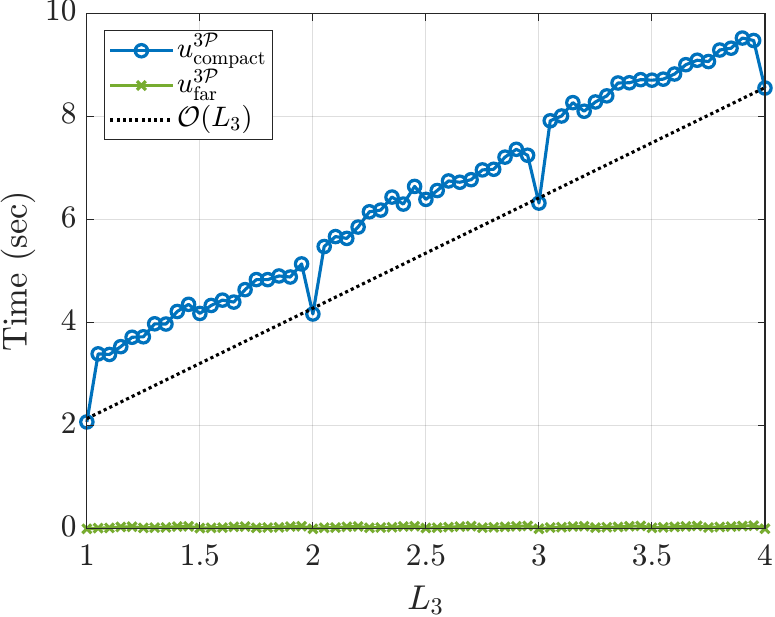}
\caption{Runtime vs $L_3$.}
\label{fig:lap_3p_moderate_aspect_ratio_timing}
\end{subfigure}
%\begin{subfigure}[t]{0.49\textwidth}
\begin{subfigure}[t]{0.33\textwidth}
\includegraphics[width=\linewidth]{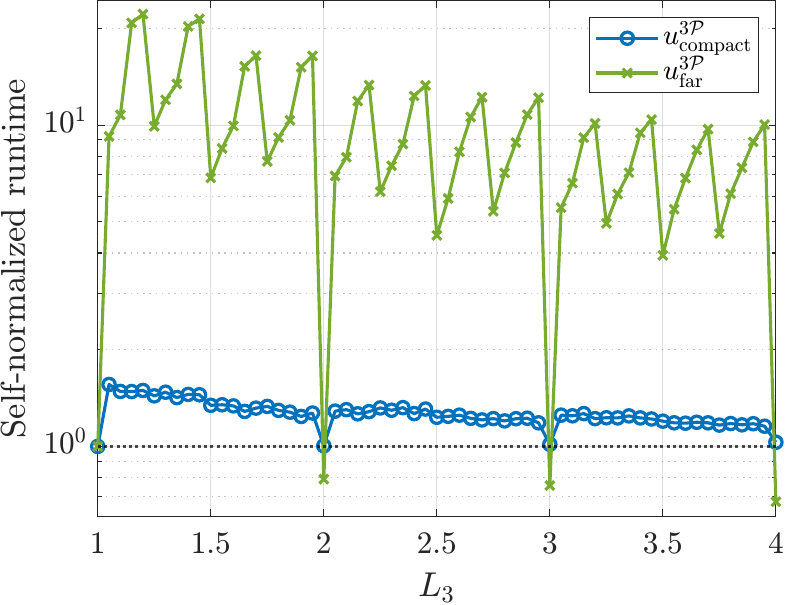}
\caption{Self-normalized runtime vs $L_3$.}
\label{fig:lap_3p_moderate_aspect_ratio_overhead}
\end{subfigure}
\vspace{-1em}
\caption{Runtimes and self-normalized runtimes of the compact and far-field contributions, $\ucompactP{3}$ and $\ufarP{3}$, for the triply periodic Laplace kernel on $L=[1,1,L_3]$.
The point density is fixed as $N=\Ncube L_3$, with $\Ncube=50\,000$, $\eeps=10^{-6}$.}
\label{fig:lap_3p_moderate_aspect_ratio}
\end{figure}

For large aspect ratios, however, the far-field contribution can become significant unless the root evaluation is accelerated. We use the same triply periodic setup as in \cref{fig:lap_3p_moderate_aspect_ratio}, but restrict $L_3$ to integer values and take $\Ncube=1000$. Since the tiling is then exact, $\ufarP{3}$ can be evaluated using the tensor-product proxy-grid structure. This is now what we refer to as the direct evaluation. We also evaluate $\ufarP{3}$ using the FFT-accelerated scheme described in  \cref{ss:large_aspect_ratio}. \cref{fig:lap_3p_large_aspect_ratio} shows the results. 
The computational time for the direct evaluation is reduced by using the tensor structure, but is expected to have an asymptotic quadratic growth in $L_3$, see \eqref{eq:compcompl-sumfact-1long} ($L_3=\notopboxes$ as $L_1=L_2=1$). 
This becomes visible as $L_3$ grows, while the FFT-accelerated evaluation has only approximately a linear increase (expected to be $L_3 \log L_3$, see \eqref{eq:compcompl-fft-acc-1long}).
This is even clearer in the self-normalized runtimes in \cref{fig:lap_3p_large_aspect_ratio_overhead}. The direct evaluation incurs increasing aspect-ratio overhead, while the FFT-accelerated evaluation stays constant. In terms of runtime, the crossover between the two approaches in our implementation occurs at an aspect ratio around $250$.

%\begin{figure}[!t]
\begin{figure}[p]
\centering
%\begin{subfigure}[t]{0.49\textwidth}
\begin{subfigure}[t]{0.33\textwidth}
\includegraphics[width=\linewidth]{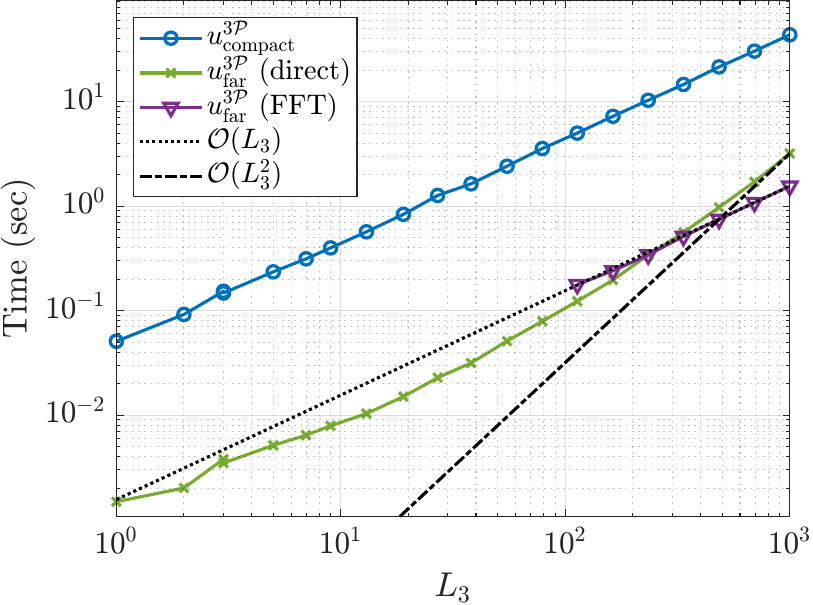}
\caption{Runtime vs $L_3$.}
\label{fig:lap_3p_large_aspect_ratio_timing}
\end{subfigure}
%\begin{subfigure}[t]{0.49\textwidth}
\begin{subfigure}[t]{0.33\textwidth}
\includegraphics[width=\linewidth]{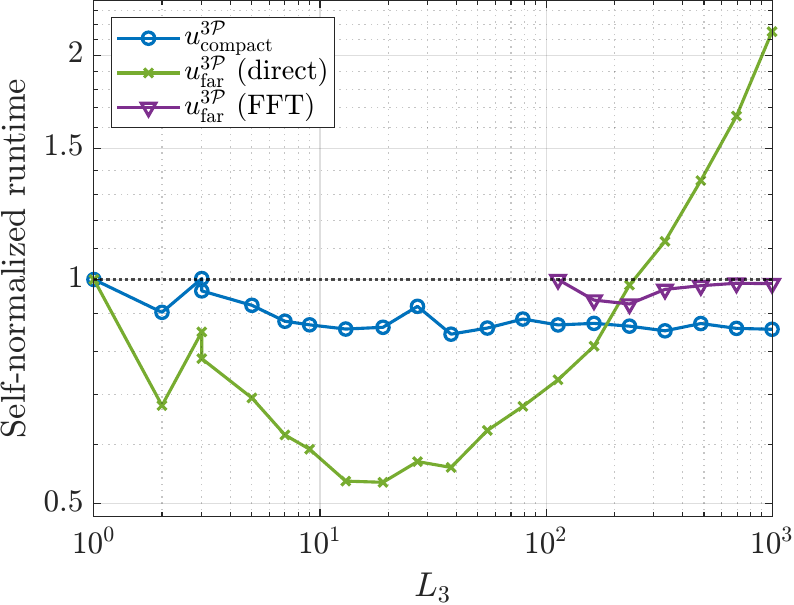}
\caption{Self-normalized runtime vs $L_3$.}
\label{fig:lap_3p_large_aspect_ratio_overhead}
\end{subfigure}
\vspace{-1em}
\caption{Large-aspect-ratio version of \cref{fig:lap_3p_moderate_aspect_ratio}, with integer $L_3$ and $\Ncube=1000$. The far-field contribution $\ufarP{3}$ is evaluated both directly using the tensor-product proxy-grid structure and with the FFT-acceleration. Timings are medians over 10 runs, except for the unit cube, where 100 runs were used. In panel (b), the FFT-accelerated $\ufarP{3}$ timings are normalized by the value at $L_3=113$, rather than by the unit-cube value.}
\label{fig:lap_3p_large_aspect_ratio}
\end{figure}

\subsection{Singly periodic Stokeslet cylinder}\label{ss:cylinder_stokeslet}
Finally, we demonstrate the periodic DMK method for a singly periodic pipe geometry representative of Stokes flow applications. Sources and targets are placed at trapezoidal nodes on the cylindrical surface, with periodicity in the axial direction, as illustrated in \cref{fig:cylinder_illustration}. The point density is kept fixed as the period length $L_1$ is increased. \cref{fig:cylinder_time_vs_L1} shows the resulting runtimes for the Stokeslet kernel for $\eeps=10^{-6},\,10^{-9},\,10^{-12}$. The observed scaling is linear in the number of points, with an expected increase in prefactor as the requested tolerance is tightened.

%\begin{figure}[!t]
\begin{figure}[p]
\centering
%\begin{subfigure}[!t]{0.53\textwidth}
\begin{subfigure}[t]{0.45\textwidth}
\includegraphics[trim={3.8cm 4.2cm 4.0cm 4.2cm},clip,width=\linewidth]{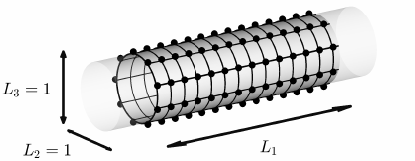}
\caption{Sketch of geometry}
\label{fig:cylinder_illustration}
\end{subfigure}
%\begin{subfigure}[!t]{0.46\textwidth}
\begin{subfigure}[t]{0.33\textwidth}
\includegraphics[width=\linewidth]{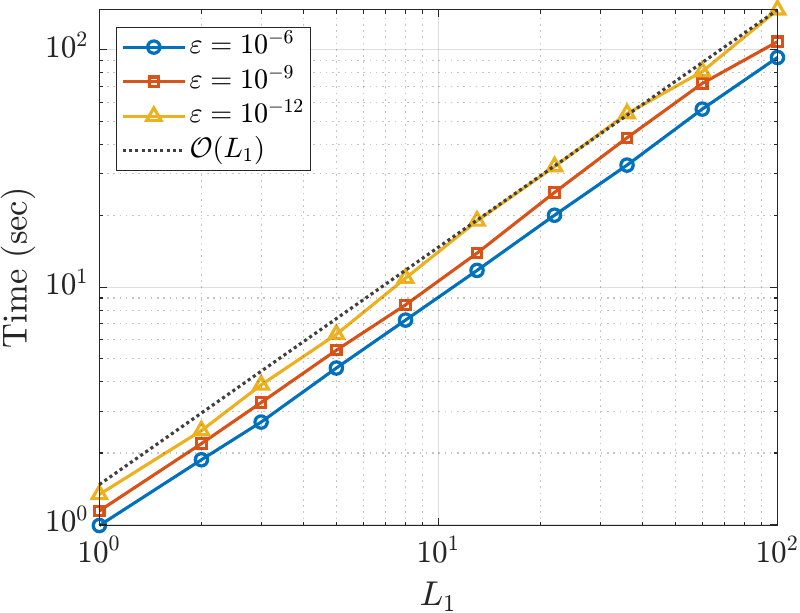}
\caption{Runtime}
\label{fig:cylinder_time_vs_L1}
\end{subfigure}
\vspace{-1em}
\caption{Panel (a) illustrates the singly periodic pipe geometry in the cell $L=[L_1,1,1]$, with sources and targets placed at trapezoidal nodes on the cylindrical surface. Panel (b) shows the runtime of $1P$-periodic DMK for the Stokeslet kernel 
with $N=\Ncube L_1$, $\Ncube=10\,000$. Timings are shown for $\eeps=10^{-6},\,10^{-9},\,10^{-12}$.}
\label{fig:cylinder_stokeslet}
\end{figure}

\clearpage

\section{Conclusions}\label{s:conclusions}

We have developed an extension of the dual-space multilevel
kernel-splitting (DMK) framework to rectangular cuboids with periodic
boundary conditions in one, two, or three coordinate directions.  The
method exploits the multilevel structure of DMK, where a telescoping
kernel split localizes all interactions below the root level, so that
the global contribution of the periodic lattice is confined to the
smooth root-level far field. The localized interactions are evaluated
on a cubical tiling of the primary cell, together with periodic
wrapping of near-neighbor interactions and, when the tiling is not
exact, a bounded domain extension populated by periodic source copies.

Periodicity is imposed on the root-level far field, which is evaluated
in Fourier space. For reduced periodicity, truncated kernels are used
to regularize the singular and near-singular modes that arise in the
non-periodic directions. This allows the Fourier integrals to be
discretized using the uniform trapezoidal rule, yielding a common
structure for all periodicities. For large-aspect-ratio cuboids with
exact tilings, the root-level summation over cubes is accelerated
using FFTs. The corresponding geometry-dependent cost is then $O(M
\log M)$, where $M$ is the number of cubes tiling the domain.

Numerical experiments for the electrostatic and Stokes kernels show
that the periodization adds only a small cost to the free-space DMK
algorithm. For each periodicity, the total cost on the tested cuboids
remains close to that of the corresponding cubic
reference. Specifically, the cost breakdown shows that the runtime for
periodic problems is dominated by the localized corrections, whose
per-source cost changes little with aspect ratio. The far-field
contribution is more geometry dependent, but its cost remains only a
fraction of the localized part and therefore has little effect on the
total cost. In the large-aspect-ratio tests, FFT acceleration of the
far-field contribution keeps the per-source cost approximately
independent of aspect ratio at constant point density. 
The results also confirm linear scaling
with the number of sources and targets, and show that the method
achieves the prescribed precision for the tested tolerances, kernels,
periodicities, and cuboid geometries.

The method provides a unified way of imposing periodic boundary
conditions in DMK, with only minor changes to the multilevel structure
of the original algorithm. We have focused on the harmonic and Stokes
kernels, but the proposed periodic extension is expected to apply
without significant modification to other non-oscillatory kernels for
which a kernel split is available. The implementation used for the
present work is intended for algorithmic development rather than
optimized performance, and incorporating the present extension scheme
into a high-performance DMK code is left for future work. Finally,
although only point-source potentials are considered here, the
periodization strategy should also be applicable to volume-potential
DMK implementations.

\section*{Acknowledgments}
Krantz and Tornberg acknowledge support from the Swedish Research Council under grant 2023-04269.

\appendix
\crefalias{section}{appendix} 

\section{Poisson's summation formula and Fourier transforms}
\label{sec:fourier-defs}
Considering periodic sums, with $P_{D\per}$ as defined in
\eqref{eq:periodic-images}, Poisson's summation formula yields the following relations
\begin{alignat}{2}
\sum_{\v p \in P_{3\per}} f(\x+\v p) & = \frac{1}{L_1L_2L_3} \sum_{\vk} \wh{f}(\vk) e^{i \vk
  \cdot \x}, \quad &  &
\wh{f}(\vk) = \int_{\reals^3} f(\x) e^{-i \vk \cdot \x} \dif\x,
\notag \\
\sum_{\v p \in P_{2\per}} f(\x+\v p) & = \frac{1}{L_1L_2} \sum_{\kbar} \wh{f}(\kbar,z) e^{i \kbar \cdot \v r},  \quad & &
\wh{f}(\kbar,z) = \int_{\reals^2} f(\x) e^{-i \kbar \cdot \v r} \dif\v r, 
\label{eqn:poisson_summation}
\\
\sum_{\v p \in P_{1\per}} f(\x+\v p) & = \frac{1}{L_1} \sum_{k_1} \wh{f}(k_1,\v r) e^{i k_1 z}, \quad & & \wh{f}(k_1,\v r) =\int_{\reals} f(\x) e^{-i k_1 z} \dif z.
\notag 
\end{alignat}
Here, the $\wh{f}$ is the continuous Fourier transform, applied in
the periodic direction(s), and 
$\x=(\v r,z)=(x,y,z)$, $\vk=(k_1,k_2,k_3)$ and $\kbar=(k_1,k_2)$. 

For $\vx, \vk \in \mathbb R^d$, without the notational splits above, the
Fourier transform pair $f(\vx),~\widehat{f}(\vk)$ is (under suitable conditions) defined by
\begin{align}
  \widehat f(\v k) &= \int_{\mathbb R^d} f(\v x) e^{-i \vk\cdot\vx} \dif \v x,\label{eq:app_fhat} \\
  f(\v x) &= \frac{1}{(2\pi)^d} \int_{\mathbb R^d} \widehat f(\vk) e^{i \vk\cdot\vx} \dif \vk.\label{eq:app_f} 
\end{align}
When the context is clear, we will often use the notation $r=|\v x|$ and $k=|\vk|$. For a
radially symmetric function in $\mathbb R^d$ we write
$f(\v x) = f(r)$ and $\widehat f(\vk) = \widehat f(k)$, with Fourier
transform pair
\begin{align}
  \widehat f(k) = 2\pi \int_0^\infty J_0(kr) f(r) r \dif r \quad &\leftrightarrow \quad
  f(r) = \frac{1}{2\pi} \int_0^\infty J_0(kr) \widehat f(k) k \dif k 
\quad {\rm in} \ \ \mathbb R^2 ,
\label{eq:radial_ft_2D}\\
  \widehat f(k) = 4\pi \int_0^\infty \frac{\sin(kr)}{kr} f(r) r^2 \dif r \quad &\leftrightarrow
 \quad f(r) = \frac{1}{2\pi^2} \int_0^\infty \frac{\sin(kr)}{kr} \widehat f(k) k^2 \dif k 
\quad {\rm in} \ \ \mathbb R^3,
\label{eq:radial_ft_3D}
\end{align}
where $J_0$ is the Bessel function of the first kind of order zero.

\section{Definition and splits of the Stokes kernels}\label{sec:stokes-split}
The tensorial Stokeslet~$\bmat{\stokeslet}$, stresslet~$\bmat{\stresslet}$ and rotlet~$\rotlett$ kernels can be defined through differentiation of the scalar biharmonic kernel $B(r)$ and harmonic kernel $H(r)$. With
\begin{equation}
H(r)=\frac{1}{4\pi r},\qquad B(r)=-\frac{r}{8\pi},
\label{eq:app_HB}
\end{equation}
we write ($\vx \in \mathbb{R}^3$, $r=|\vx|$):
\begin{align}
\stokeslet_{jl}(\vx) &= \Dop^{\stokeslet}_{jl} \biharmonic(|\vx|)=(-\delta_{jl}\nabla^2 + \nabla_j\nabla_l) \biharmonic(|\vx|)
=\frac{1}{8\pi} \left(\frac{\delta_{jl}}{r} + \frac{x_jx_l}{r^3}\right), 
\label{eq:stokeslet_def_deriv} \\
\stresslet_{jlm}(\vx) & = \Dop^{\stresslet}_{jlm} \biharmonic(|\vx|)=
\left[ -\left(     \delta_{jl}\nabla_m+\delta_{lm}\nabla_j+\delta_{mj}\nabla_l
    \right) \nabla^2 + 2\nabla_j\nabla_l\nabla_m \right] \biharmonic(|\vx|)=-\frac{3}{4\pi} \frac{x_jx_lx_m}{r^5}, \label{eq:stresslet_def_deriv} \\
\rotlet_{jl}(\vx) &=\Dop^{\rotlet}_{jl} \harmonic(|\vx|) =
  - \frac{1}{2} \epsilon_{jlm} \nabla_m \harmonic(|\vx|)
  = \frac{1}{8\pi}\epsilon_{jlm} \frac{x_m}{r^3}.
  \label{eq:rotlet_def_deriv} 
\end{align}
Here, $\delta_{jl}$ denotes the Kronecker delta and
$\epsilon_{jlm}$ denotes the Levi-Civita symbol. Einstein's
summation convention is used, with repeated indices implicitly
summed over $\{1,2,3\}$.

The Fourier symbols of the differential operators above are given by ($k=\lvert \vk \rvert$)
\begin{align}
  \label{eq:diffop-rel-fourier-S}
  \wh{\Dop}^{\stokeslet}_{jl}(\vk)
  &= k^2 \delta_{jl} - k_j k_l
  , \\
  \label{eq:diffop-rel-fourier-T}
  \wh{\Dop}^{\stresslet}_{jlm}(\vk)
  &=
  i\left[(k_m\delta_{jl} + k_l\delta_{mj} + k_j\delta_{lm}) k^2 - 2k_j k_l k_m\right]
  , \\
 \label{eq:diffop-rel-fourier-R}
  \wh{\Dop}^{\rotlet}_{jl}(\vk)
  &= -\frac{1}{2}i\epsilon_{jlm}k_m
  .
\end{align}

In \cref{sec:screening}, we introduced the screening function $\screen^H$ for the harmonic kernel. As noted in \cref{rem:biharm_screening}, the biharmonic requires a different screening. We here state the results obtained in \cite{StokesDMK2026}.
Consider the window function $\wfunc(x)$, an even function with $\wfunchat(0)=1$, and let $\Phi(r)$ be defined as in \eqref{eq:def_erfc_like_fcn}. Define the biharmonic mollifier
\begin{equation}
\screenhat^B(k)=\wfunchat(k) - \frac{1}{2} k \wfunchat'(k),
\label{eq:gammaB_in_phi}
\end{equation}
where $\wfunchat(k)$ is the Fourier transform of $\wfunc(x)$. 
From this, define the mollified biharmonic in Fourier space 
\begin{equation}
\Bmollhat(k)=\frac{\screenhat^B(k)}{k^4}.
\label{eq:Bmollhat}
\end{equation}
For the harmonic, we have $\Hmollhat(k)=\screenhat^H(k)/k^2$, where $\screenhat^H(k)=\wfunchat(k)$.
The rate of decay of all the functions defined below will depend on the choice of $\wfunc(r)$. 

Using the differential operators in \eqref{eq:stokeslet_def_deriv}--\eqref{eq:rotlet_def_deriv} and the Fourier symbols in  \eqref{eq:diffop-rel-fourier-S}--\eqref{eq:diffop-rel-fourier-R}, the following results can be derived \cite{StokesDMK2026}.
For the Stokeslet we have the {\em mollified Stokeslet} in Fourier space:
\begin{align}
    \widehat S_{M,jl}(\v k) = \left(k^2 \delta_{jl} - k_j k_l\right) \Bmollhat(k),
  \label{eq:moll_stokeslet}
\end{align}
the {\em residual Stokeslet},
\begin{align}
    S_{R,jl}(\vx) 
    &=   \RSdiag(r) \frac{\delta_{jl}}{8\pi r}
      +   \RSoffd(r)  \frac{x_jx_l}{8\pi r^3},
      \label{eq:res_stokeslet}
\end{align}
where 
\begin{equation}
  \RSdiag(r) =
               \Phi(r) - 2 r\wfunc(r),  \qquad
  \RSoffd(r) =
               \Phi(r) + 2 r\wfunc(r),
               \label{eq:Soffdiag}
\end{equation}
and the self-interaction term 
\begin{align}
\uself(\vx_\tind) = -\lim_{|\v x|\to 0} \bmat{S}_{M}(\v x) \rho_\tind=
-\frac{1}{2\pi} \wfunc(0) \rho_\tind.
\label{eq:S_self}
\end{align}

For the stresslet we have, 
the {\em mollified stresslet} in Fourier space: 
\begin{align}
    \widehat T_{M,jlm}(\v k) &=
 i \left[(k_m \delta_{jl} +k_j \delta_{lm} +k_l  \delta_{mj}) k^2   -2
  k_j k_l k_m \right]  
                             \Bmollhat(k), 
                             \label{eq:moll_stresslet}
\end{align}
and the {\em residual stresslet}, 
\begin{align}
  T_{R,jlm}(\vx) = \frac{\delta_{jl}x_m + \delta_{lm}x_j + \delta_{mj}x_l}{8\pi r^3} \, \RTdiag(r)
  - \frac{3}{4\pi}\frac{x_jx_lx_m}{r^5} \, \RToffd(r),
  \label{eq:res_stresslet}
\end{align}
where 
\begin{equation}
  \RTdiag(r) = 
               -2 r^2 \wfunc'(r) ,
\quad                
  \RToffd(r) =
               \Phi(r) + 2r \wfunc(r)  - \tfrac{2}{3} r^2 \wfunc'(r).
               \label{eq:T-offd}
\end{equation}
The self-interaction term for the stresslet vanishes: 
$\uself(\vx_\tind) = 0$.

The {\em mollified rotlet} in Fourier space is given by
\begin{align}
  \widehat \Omega_{M,jl}(\v k) = - \frac{1}{2}i\epsilon_{jlm} k_m \widehat H_M(k),
  \label{eq:moll_rotlet}
\end{align}
and the {\em residual rotlet} 
is given by
\begin{align}
  \begin{split}
    \Omega_{R,jl}(\vx) = 
    \frac{1}{8\pi} \epsilon_{jlm} \frac{x_m}{r^3} \ROoffd(r),
  \end{split}
  \label{eq:res_rotlet}
\end{align}
with
\begin{align}
  \ROoffd(r) =\Phi(r) + 2r\wfunc(r).
  \label{eq:ROoffd}
\end{align}
The self-interaction term for the rotlet vanishes: $\uself(\vx_\tind) = 0$.

\begin{remark}\label{rem:scaling_stokes_split}
  For a split at length scale $\nu>0$, the real-space functions $\RSdiag$, $\RSoffd$, $\RTdiag$, $\RToffd$, $\ROoffd$ are evaluated at $r/\nu$, while the Fourier-space screening functions $\screenHhat$, $\screenBhat$ are evaluated at $k\nu$. For example,
  \begin{align}
    S_{R,jl}(\vx, \decpar) = \RSdiag(r/\decpar) \frac{\delta_{jl}}{8\pi r}
    +   \RSoffd(r/\decpar)  \frac{x_jx_l}{8\pi r^3},
    \quad
    \text{and}
    \quad
    \widehat S_{M,jl}(\v k) = \left(k^2 \delta_{jl} - k_j k_l\right) \Bmollhat(k \decpar).
  \end{align}
  The Stokeslet self-interaction term \eqref{eq:S_self} scales as $1/\nu$,
  \begin{align}
    \uself(\vx_\tind,\decpar) =
    -\frac{1}{2\pi\decpar} \wfunc(0) \rho_\tind.
  \end{align}
  The stresslet and rotlet self-interaction terms remain zero. The aforementioned real-space functions are smooth and, in the implementation, are evaluated using precomputed polynomial approximations for the chosen window function $\wfunc$.
\end{remark}

\section{Truncated kernels related to the biharmonic equation}\label{sec:trunc_biharmonic} \Cref{ss:truncated_kernel_modification} described the truncated kernel modification for the harmonic kernels that that arise in the 1$\per$ and 2$\per$ Fourier integrals after applying Poisson summation in the periodic directions. We now collect the corresponding formulas for the biharmonic kernel. In the PDE viewpoint of \cref{ss:pde_perspective}, these kernels are the Green's functions of the modified Helmholtz and squared modified Helmholtz problems in \eqref{eq:2DmodH}--\eqref{eq:1DmodH} and \eqref{eq:2DmodB}--\eqref{eq:1DmodB}, respectively. The formulas below are needed for the Stokeslet and stresslet as these kernels are generated from the biharmonic kernel (see \cref{sec:trunc_Stokes} for their construction).

We use the notation of \cref{ss:truncated_kernel_modification}. Thus $d$ denotes the number of free directions, $\alpha\geq0$ is the magnitude of the discrete periodic wavenumber determined by \eqref{eq:alpha}, and $\kappa=|\vkappa|$ with $\vk\in\R^d$. For each fixed periodic mode, the harmonic problem in the free variables has Fourier symbol $\wh{H}_d^\alpha(\kappa)=(\alpha^2+\kappa^2)^{-1}$. The corresponding biharmonic problem in the free variables has the Fourier symbol  $\wh{B}_d^\alpha(\kappa)=(\alpha^2+\kappa^2)^{-2}$, associated with the squared modified Helmholtz operator.

For $\alpha=0$, these symbols are singular at $\kappa=0$. For small nonzero values of $\alpha$, they are smooth but sharply peaked near the origin. As in the harmonic case, we want to truncate the corresponding biharmonic kernel in real space, so that the Fourier transform of the truncated Green's function has a finite limit value at $\kappa=0$. For $\alpha=0$, we use the truncated kernel choices from \cite[Appendix D]{bagge_fast_2023}. For $\alpha>0$, these formulas are obtained from the harmonic ones in \eqref{eq:GreenR2D_Fourier}--\eqref{eq:GreenR1D_Fourier} by differentiating with respect to $\alpha$. This relation is summarized in the following lemma. The same identities holds for the truncated kernels, since the truncation radius $\Rtrunc$ is independent of $\alpha$.

\begin{lemma}\label{lemma:biharmonic_from_harmonic}
Let $\mathcal{L}_\alpha=-\Delta_d+\alpha^2$ with $\alpha>0$. Let $H_d^\alpha$ and $B_d^\alpha$ be the Green's functions associated with $\mathcal{L}_\alpha$ and $\mathcal{L}_\alpha^2$, respectively, so that $\mathcal{L}_\alpha H_d^\alpha(\vx)=\delta(\vx)$ and $\mathcal{L}_\alpha^2 B_d^\alpha(\vx)=\delta(\vx)$, $\vx\in\mathbb{R}^d$, where $\delta$ is the Dirac delta function.

Then
\begin{equation}
B_d^\alpha(\vx) = -\frac{1}{2\alpha}\frac{\partial}{\partial\alpha}H_d^\alpha(\vx).
\label{eq:app_B}
\end{equation}
Consequently,
\begin{equation}
\wh{B}_d^\alpha(\kappa) = -\frac{1}{2\alpha}\frac{\partial}{\partial\alpha}\wh{H}_d^\alpha(\kappa).
\label{eq:app_Bhat}
\end{equation}
\end{lemma}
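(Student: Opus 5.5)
The natural route is to prove the Fourier-side identity \eqref{eq:app_Bhat} first and then transfer it back to real space. For $\alpha>0$ the Fourier symbols of $\mathcal{L}_\alpha$ and $\mathcal{L}_\alpha^2$ are $\alpha^2+\kappa^2$ and $(\alpha^2+\kappa^2)^2$. Both are strictly positive, so the decaying Green's functions are uniquely determined by
\begin{equation*}
\wh{H}_d^\alpha(\kappa)=(\alpha^2+\kappa^2)^{-1},
\qquad
\wh{B}_d^\alpha(\kappa)=(\alpha^2+\kappa^2)^{-2}.
\end{equation*}
Differentiating the first symbol gives
\begin{equation*}
\frac{\partial}{\partial\alpha}(\alpha^2+\kappa^2)^{-1}=-2\alpha(\alpha^2+\kappa^2)^{-2},
\end{equation*}
and multiplying by $-1/(2\alpha)$ yields exactly $\wh{B}_d^\alpha(\kappa)$.

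To obtain \eqref{eq:app_B}, invert the Fourier transform and interchange $\partial_\alpha$ with the inverse transform integral. On any compact interval $\alpha\in[\alpha_0,\alpha_1]$ with $\alpha_0>0$, the derivative $|\partial_\alpha \wh{H}_d^\alpha|\le 2\alpha_1(\alpha_0^2+\kappa^2)^{-2}$. This bound is integrable over $\R^d$ for $d=1,2$, so dominated convergence justifies the interchange, pointwise in $\vx$. Interchanging the operations in this way is the only point requiring care; everything else is algebra.

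An alternative, purely real-space check is also available. Differentiate $\mathcal{L}_\alpha H_d^\alpha=\delta$ with respect to $\alpha$ to get $\mathcal{L}_\alpha(\partial_\alpha H_d^\alpha)=-2\alpha H_d^\alpha$. Applying $\mathcal{L}_\alpha$ once more gives $\mathcal{L}_\alpha^2(-\partial_\alpha H_d^\alpha/(2\alpha))=\delta$. Uniqueness among tempered solutions, which holds since the symbol has no real zeros, then identifies this function with $B_d^\alpha$. As a consistency check one can verify the result against the explicit formulas: in 1D, $-\frac{1}{2\alpha}\partial_\alpha\bigl(e^{-\alpha|z|}/(2\alpha)\bigr)=(1+\alpha|z|)e^{-\alpha|z|}/(4\alpha^3)$, which is the known 1D squared modified Helmholtz Green's function.

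Finally, the statement about truncated kernels follows in the same way. Multiply \eqref{eq:app_B} by $\rect(r/(2\Rtrunc))$, which commutes with $\partial_\alpha$ because $\Rtrunc$ does not depend on $\alpha$. Then take Fourier transforms, interchanging $\partial_\alpha$ with the transform integral; this is justified because the integrand is now compactly supported and smooth in $\alpha$, apart from an integrable $\log$ singularity in 2D at $r=0$ that is uniformly controlled for $\alpha$ in compact subsets of $(0,\infty)$.
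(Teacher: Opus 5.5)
Your proof is correct. Your ``alternative'' real-space paragraph is exactly the paper's proof: differentiate $\mathcal{L}_\alpha H_d^\alpha=\delta$ in $\alpha$, use $\partial_\alpha\mathcal{L}_\alpha=2\alpha\mathcal{I}$, apply $\mathcal{L}_\alpha$ once more, and only then Fourier transform to get \eqref{eq:app_Bhat}.

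Your primary route runs the other way. It makes \eqref{eq:app_Bhat} a one-line computation with the explicit symbols and recovers \eqref{eq:app_B} by inverting the transform. This gives you uniqueness for free, since the Green's functions are fixed by their symbols. The paper instead uses uniqueness tacitly when it identifies the solution of $\mathcal{L}_\alpha^2u=\delta$ with $B_d^\alpha$, a point you correctly flag in your alternative.

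The paper's ordering has its own advantage: the real-space identity \eqref{eq:app_B} is the primary statement, and that is what the truncated kernels need. The truncated symbols depend on $\alpha$ and $\kappa$ separately, so no symbol algebra transfers. One must instead multiply \eqref{eq:app_B} by the $\alpha$-independent cutoff, as in your last paragraph. There the interchange is easier than you state, since $\partial_\alpha H_2^\alpha=-rK_1(\alpha r)/(2\pi)$ is bounded near $r=0$.

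One technical correction: for $d=2$ the symbol $(\alpha^2+\kappa^2)^{-1}$ is not integrable over $\R^2$. So the inverse-transform integral for $H_2^\alpha$ is not absolutely convergent, and dominated convergence does not apply verbatim. Apply it instead to $H_2^\alpha-H_2^{\alpha_0}$, whose symbol $(\alpha_0^2-\alpha^2)(\alpha^2+\kappa^2)^{-1}(\alpha_0^2+\kappa^2)^{-1}$ is integrable, or argue in $\mathcal{S}'$.
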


\begin{proof}
Differentiating $\mathcal{L}_\alpha H_d^\alpha=\delta$ with respect to $\alpha$, and using $\partial_\alpha\mathcal{L}_\alpha=2\alpha\mathcal{I}$, gives $\mathcal{L}_\alpha(-\partial_\alpha(H_d^\alpha)/(2\alpha))=H_d^\alpha$. Applying $\mathcal{L}_\alpha$ once more gives \eqref{eq:app_B}. The identity \eqref{eq:app_Bhat} follows directly from \eqref{eq:app_B} by taking the Fourier transform.
\end{proof}

We now list the Fourier transforms of truncated scalar kernels. Let $B_d^{\alpha,\Rtrunc}$ denote the biharmonic kernel, truncated in the free variables to $|\vx|\leq\Rtrunc$, analogously to \eqref{eq:Green2DRect}. The Fourier transform of $B_d^{\alpha,\Rtrunc}$ for $d=2$ and $d=1$ is:
\begin{equation}
\widehat{B}^{\alpha,\Rtrunc}_{2}(\kappa) = \left\{ 
\begin{array}{cl}
\Big(1-\Rtrunc J_0(\kappa\Rtrunc) \big(\Rtrunc(\alpha^2+\kappa^2) K_0(\alpha\Rtrunc)+2 \alpha K_1(\alpha\Rtrunc)\big)/2\\
+\kappa \Rtrunc J_1(\kappa \Rtrunc) \big(\Rtrunc(\alpha^2+\kappa^2) K_1(\alpha\Rtrunc)/(2\alpha)+K_0(\alpha \Rtrunc)\big)\Big)/(\alpha^2+\kappa^2)^2,&\alpha\neq 0,\\
\left(1-J_0(\kappa\Rtrunc)-\kappa\Rtrunc J_1(\kappa\Rtrunc)/2\right)/\kappa^4,&\alpha=0, \kappa\neq 0, \\
\Rtrunc^4/64,&\alpha=0, \kappa=0,
\end{array}
\right.
\label{eq:BGreenR2D_Fourier}
\end{equation}
\begin{equation}
\widehat{B}^{\alpha,\Rtrunc}_{1}(\kappa) = \left\{ 
\begin{array}{cl}
\Big(2 \alpha^3+\kappa e^{-\alpha\Rtrunc} \big(\alpha \Rtrunc\left(\alpha^2+\kappa^2\right)+3 \alpha^2+\kappa^2\big) \sin (\kappa\Rtrunc) \\
-\alpha^2e^{-\alpha\Rtrunc} \big(\Rtrunc\left(\alpha^2+\kappa^2\right)+2 \alpha\big) \cos (\kappa\Rtrunc)\Big)/\big(2\alpha^3(\alpha^2+\kappa^2)^2\big),&\alpha\neq 0,\\
\left(1-\cos(\kappa\Rtrunc)-\kappa\Rtrunc\sin(\kappa\Rtrunc)/2\right)/\kappa^4,&\alpha=0, \kappa\neq 0, \\
\Rtrunc^4/24,&\alpha=0, \kappa=0.
\end{array}
\right.
\label{eq:BGreenR1D_Fourier}
\end{equation}
Here $J_m$ denotes the Bessel function of the first kind and $K_m$ the modified Bessel function of the second kind, both of order $m$. For the zero-mode $\widehat{B}^{0,\Rtrunc}_{1}$ when $\kappa\neq 0$, we have used the non-uniqueness of the one-dimensional biharmonic Green's function to choose an even kernel that vanishes with $\mathcal{C}^1$-regularity at the cut-off. 
This improves the decay of the Fourier transform. The limiting values at $\kappa=0$ are obtained by taking the limit $\kappa\rightarrow0$ in the corresponding $\kappa\neq 0$ formulas. 

\begin{remark}
The formulas for $\wh{B}^{0,\Rtrunc}_{1}$ are included in \eqref{eq:BGreenR1D_Fourier} for completeness, since it is not used in practice for the doubly periodic Stokes zero mode. As discussed in \cite[Appendix D]{bagge_fast_2023}, the biharmonic kernel can be avoided in this case. Instead, the zero-mode Stokes kernels can be written using the one-dimensional harmonic kernel together with the sign-function kernel
\begin{equation}
Z_1(x) = \frac{\sgn(x)}{4} \quad\leftrightarrow\quad \wh{Z}_1(\kappa)= - \frac{i}{2\kappa}.
\end{equation}
Truncating $Z_1$ to $|x|\leq\Rtrunc$ gives
\begin{equation}
\widehat{Z}^{0,\Rtrunc}_{1}(\kappa) = \left\{ 
\begin{array}{cl}
-i\left(1-\cos(\kappa\Rtrunc)\right)/(2\kappa),&\kappa\neq 0,\\
0,&\kappa=0.
\end{array}
\right.
\label{eq:ZGreenR1D_Fourier}
\end{equation}
\end{remark}

\section{Truncated kernels for Stokes flow in reduced periodicity}\label{sec:trunc_Stokes}The Stokeslet $\bmat{\stokeslet}$, stresslet $\bmat{\stresslet}$ and rotlet $\rotlett$ were defined in \cref{sec:stokes-split} by applying differential operators to the scalar harmonic kernel $H$ or biharmonic kernel $B$. The same construction is used for the truncated kernels, but with $H$ and $B$ replaced, in the free variables, by their truncated counterpart. Here we collect the Fourier transforms of the truncated Stokes kernels needed for the Fourier integrals in the singly and doubly periodic cases. In the discretized far-field formula \eqref{eq:ewald_far_allP_discr}, these kernels replace the generic Fourier kernel $\genericGhatopt$. The zero-mode formulas are those derived in \cite[Appendix D]{bagge_fast_2023}, adapted here to the normalization of \cref{sec:stokes-split}.

We write each kernel in the form
\begin{equation}
\genericGopt(\vr)=\Dopt_{\genericGopt}A(r),\qquad A(r) = 
\begin{cases}
B(r), & \genericGopt=\bmat{\stokeslet}\ \text{or}\ \bmat{\stresslet},\\
H(r), & \genericGopt=\rotlett,
\end{cases}
\end{equation}
where $\Dopt_{\genericGopt}$ is defined by one of the differential operators in \eqref{eq:stokeslet_def_deriv}--\eqref{eq:rotlet_def_deriv} depending on $\genericGopt$, and with $H$ and $B$ defined in \eqref{eq:app_HB}.

Let $D\in\{1,2\}$ be the number of periodic directions and let $d=3-D$ be the number of free directions. Discrete wavenumbers in periodic directions are denoted $k_i$, and continuous wavenumbers in free directions are denoted $\kappa_i$. For each fixed periodic mode, set $\alpha=|\vk|$, $\kappa=|\vkappa|$ with $\k=(k_1,\vkappa)=(k_1,\kappa_2,\kappa_3)$, $\vkappa\in\mathbb{R}^d$, and $k=\sqrt{\alpha^2+\kappa^2}$. The truncated scalar factors are radial in the free Fourier variables, and hence depend on $\vkappa$ only through $\kappa$.

Define the scalar truncated kernel symbol by
\begin{equation}
\wh{A}_d^{\alpha,\Rtrunc}(\kappa)=
\begin{cases}
\wh B_d^{\alpha,\Rtrunc}(\kappa),
& \genericGopt=\bmat{\stokeslet}\ \text{or}\ \bmat{\stresslet},\\
\wh H_d^{\alpha,\Rtrunc}(\kappa),
& \genericGopt=\rotlett,
\end{cases}
\label{eq:truncated_stokes_kernel_general}
\end{equation}
where $\wh H_d^{\alpha,\Rtrunc}$ and $\wh B_d^{\alpha,\Rtrunc}$ are given in \eqref{eq:GreenR2D_Fourier}--\eqref{eq:GreenR1D_Fourier} and \eqref{eq:BGreenR2D_Fourier}--\eqref{eq:BGreenR1D_Fourier}, respectively. 

For the singly periodic case, $D=1$, we write $\vk=(k_1,\kappa_2,\kappa_3)$, $\alpha=|k_1|$, and $\kappa=\sqrt{\kappa_2^2+\kappa_3^2}$. The truncated Stokes kernel is defined by
\begin{equation}
\genericGhatopt_{\Rtrunc}^{1\per}(k_1,\kappa_2,\kappa_3) \coloneqq
\begin{cases}
\wh{\Dopt}_{\genericGopt}(k_1,\kappa_2,\kappa_3) \wh{A}_2^{|k_1|,\Rtrunc}(\kappa), &
k_1 \neq 0, \\[5pt]
\bmat{C}_{\genericGopt}^H(\kappa_2,\kappa_3)
\wh{\harmonic}_2^{0,\Rtrunc}(\kappa)
+
\bmat{C}_{\genericGopt}^B(\kappa_2, \kappa_3)
\wh{\biharmonic}_2^{0,\Rtrunc}(\kappa)
, &
k_1 = 0.
\end{cases}
\end{equation}
Here $\wh{\Dopt}_{\genericGopt}$ is the Fourier symbol associated with $\genericGopt$, as defined from \eqref{eq:diffop-rel-fourier-S}--\eqref{eq:diffop-rel-fourier-R}.

For the Stokeslet, the coefficient matrices are
\begin{equation}
\bmat{C}_{\bmat{\stokeslet}}^H =
\begin{bmatrix}
1&0&0\\
0&0&0\\
0&0&0
\end{bmatrix},\qquad
\bmat{C}_{\bmat{\stokeslet}}^B =
\begin{bmatrix}
0&0&0\\
0&\kappa_3^2&-\kappa_2\kappa_3\\
0&-\kappa_2\kappa_3&\kappa_2^2
\end{bmatrix},
\end{equation}

For the rotlet,
\begin{equation}
\bmat{C}_{\rotlett}^H = -\frac{i}{2}
\begin{bmatrix}
0 & \kappa_3 & -\kappa_2\\
-\kappa_3 & 0 & 0\\
\kappa_2 & 0 & 0
\end{bmatrix},\qquad
\bmat{C}_{\rotlett}^B = \bmat{0}.
\end{equation}

For the stresslet, the coefficients are third-order tensors. We represent them by matrices in the last two indices. That is, for each fixed first index $j$ we get $(\bmat{C}_{\bmat{\stresslet},j}^H)_{lm}$ and $(\bmat{C}_{\bmat{\stresslet},j}^B)_{lm}$, with $\bmat{C}_{\bmat{\stresslet},j}^H$ and $\bmat{C}_{\bmat{\stresslet},j}^B$ being symmetric $3\times 3$ matrices. Write
\begin{equation}
\bmat{C}_{\bmat{\stresslet},j}^H
=
i\,\widetilde{\bmat{C}}_{\bmat{\stresslet},j}^H,
\qquad
\bmat{C}_{\bmat{\stresslet},j}^B
=
i\,\widetilde{\bmat{C}}_{\bmat{\stresslet},j}^B,
\end{equation}
The harmonic parts are
\begin{equation}
\widetilde{\bmat{C}}_{\bmat{\stresslet},1}^H=
\begin{bmatrix}
0 & \kappa_2 & \kappa_3\\
\kappa_2 & 0 & 0\\
\kappa_3 & 0 & 0
\end{bmatrix},\qquad
\widetilde{\bmat{C}}_{\bmat{\stresslet},2}^H=
\begin{bmatrix}
\kappa_2 & 0 & 0\\
0        & 0 & 0\\
0        & 0 & 0
\end{bmatrix},\qquad
\widetilde{\bmat{C}}_{\bmat{\stresslet},3}^H=
\begin{bmatrix}
\kappa_3 & 0 & 0\\
0        & 0 & 0\\
0        & 0 & 0
\end{bmatrix}.
\end{equation}
For the biharmonic parts, $\bmat{C}_{\bmat{\stresslet},1}^B=\bmat{0}$, and
\begin{equation}
\widetilde{\bmat{C}}_{\bmat{\stresslet},2}^B=
\begin{bmatrix}
0 & 0 & 0\\
0 & \kappa_2(3\kappa^2-2\kappa_2^2)
  & \kappa_3(\kappa^2-2\kappa_2^2)\\
0 & \kappa_3(\kappa^2-2\kappa_2^2)
  & \kappa_2(\kappa^2-2\kappa_3^2)
\end{bmatrix},\qquad
\widetilde{\bmat{C}}_{\bmat{\stresslet},3}^B=
\begin{bmatrix}
0 & 0 & 0\\
0 & \kappa_3(\kappa^2-2\kappa_2^2)
  & \kappa_2(\kappa^2-2\kappa_3^2)\\
0 & \kappa_2(\kappa^2-2\kappa_3^2)
  & \kappa_3(3\kappa^2-2\kappa_3^2)
\end{bmatrix}.
\end{equation}

For the doubly periodic case, $D=2$, we write $\vk=(k_1,k_2,\kappa_3)$, $\alpha=\sqrt{k_1^2+k_2^2}$, $\kappa=|\kappa_3|$, and define
\begin{equation}
\genericGhatopt_{\Rtrunc}^{2\per}(k_1,k_2,\kappa_3) \coloneqq
\begin{cases}
\wh{\Dopt}_{\genericGopt}(k_1,k_2,\kappa_3) \wh{A}_1^{\alpha,\Rtrunc}(\kappa), &
(k_1,k_2) \neq (0,0), \\[5pt]
\bmat{C}_{\genericGopt}^E(\kappa_3)
\wh{E}_1^{0,\Rtrunc}(\kappa_3), &
(k_1,k_2) = (0,0).
\end{cases}
\end{equation}
Here the scalar zero-mode factor is
\begin{equation}
\wh{E}_1^{0,\Rtrunc}(\kappa_3)=
\begin{cases}
\wh Z_1^{0,\Rtrunc}(\kappa_3),
& \genericGopt=\rotlett\ \text{or}\ \bmat{\stresslet},\\
\wh H_1^{0,\Rtrunc}(\kappa_3),
& \genericGopt=\bmat{\stokeslet}.
\end{cases}
\end{equation}
The truncated sign-function kernel $\wh{Z}_1^{0,\Rtrunc}$ is given in \eqref{eq:ZGreenR1D_Fourier}. Since this kernel is odd, it is evaluated at the signed wavenumber $\kappa_3$. The harmonic factor $\wh{H}_1^{0,\Rtrunc}$ is even and is therefore written as a function of $\kappa=|\kappa_3|$. 

The coefficient matrices for the Stokeslet and rotlet are
\begin{equation}
\bmat{C}_{\bmat{\stokeslet}}^E=
\begin{bmatrix}
1&0&0\\
0&1&0\\
0&0&0
\end{bmatrix},
\qquad
\bmat{C}_{\rotlett}^E=
\begin{bmatrix}
0&1&0\\
-1&0&0\\
0&0&0
\end{bmatrix}.
\label{eq:C_stokeslet_rotlet_E_2p}
\end{equation}
For the stresslet, the coefficient tensor is represented, as in the singly periodic case, by three matrices in the last two indices:
\begin{equation}
\bmat{C}_{\bmat{\stresslet},1}^E
=
\begin{bmatrix}
0&0&-2\\
0&0&0\\
-2&0&0
\end{bmatrix},\qquad
\bmat{C}_{\bmat{\stresslet},2}^E
=
\begin{bmatrix}
0&0&0\\
0&0&-2\\
0&-2&0
\end{bmatrix},\qquad
\bmat{C}_{\bmat{\stresslet},3}^E
=
\begin{bmatrix}
-2&0&0\\
0&-2&0\\
0&0&-2
\end{bmatrix}.
\label{eq:C_stresslet_E_2p}
\end{equation}

\begin{remark}[Constant correction for the singly periodic Stokeslet]\label{rem:stokeslet_const_corr}The gauge choices used in the truncated singly periodic zero-mode kernels introduce an additive constant in the Stokeslet far-field contribution. The corresponding correction is zero for the stresslet and rotlet, and no such correction is needed for any of the doubly periodic zero-mode Stokes kernels \cite[Appendix D]{bagge_fast_2023}. The correction added to the singly periodic far-field term in \eqref{eq:ewald_far_1P_es} is
\begin{equation}
\bmat{u}_{\Rtrunc}^{1\per,\text{corr}}=
-\frac{1}{L_1}
\begin{bmatrix}
\dfrac{1+\log \Rtrunc}{2\pi} & 0 & 0\\[1ex]
0 & \dfrac{\log \Rtrunc+1/2}{4\pi} & 0\\[1ex]
0 & 0 & \dfrac{\log \Rtrunc+1/2}{4\pi}
\end{bmatrix}
\sum_{\alpha=1}^N \boldsymbol{\rho}_\alpha.
\end{equation}
\end{remark}

\section{Estimates for Fourier integrals in reduced periodicities}
\label{sec:error_estimates}
The far-field contribution in reduced periodicity, \eqref{eq:ewald_far_2P_es} for the $2\per$ case and \eqref{eq:ewald_far_1P_es} for the $1\per$ case, contain Fourier integrals over the free directions. Their discretization was described in  \cref{ss:discretization_fourier_integrals} and this appendix gives estimates of the errors introduced.  

We consider the one-dimensional model integral, representative of \eqref{eq:ewald_far_2P_es} and \eqref{eq:ewald_far_1P_es},
\begin{equation}
I_m(\alpha,b) = \int_{\mathbb R}\frac{\screenhat\left(\decpar\sqrt{\alpha^2+\kappa^2}\right)}{(\alpha^2+\kappa^2)^m}e^{i\kappa b}\,\dif\kappa,\qquad \alpha>0,\qquad m=1,2,
\label{eq:app_model_integral}
\end{equation}
where $b$ is the source-to-target separation in the free direction. We write $s=|b|$ with $0\leq s \leq L_j$, where $L_j$ is the domain length in the corresponding free direction. The screening function $\screenhat$ is either the harmonic screening $\screenHhat$ or the biharmonic screening $\screenBhat$, defined in \eqref{eq:wfunchat} and \eqref{eq:gammaB_in_phi}, respectively, and $\decpar>0$ is the splitting parameter. The case $m=1$ corresponds to the harmonic Green's function, while $m=2$ is associated with the biharmonic Green's function relevant to kernels of Stokes flow (cf.~\eqref{eq:Hhatsplit_wfunc} and \eqref{eq:Bmollhat}). The infinite trapezoidal rule with spacing $\hk$ is
\begin{equation}
Q_{m,\hk}(\alpha,b) = \hk\sum_{n=-\infty}^{\infty}\frac{\screenhat\left(\decpar\sqrt{\alpha^2+(n\hk)^2}\right)}{\left(\alpha^2+(n\hk)^2\right)^m}e^{in\hk b},
\label{eq:app_model_trapz}
\end{equation}
with the associated quadrature error
\begin{equation}
\mathcal{E}_m(\alpha,b;\hk) = \big|I_m(\alpha,b)-Q_{m,\hk}(\alpha,b)\big|.
\end{equation}

Assuming that $\screenhat$ is analytic near the origin and normalized by $\screenhat(0)=1$, we estimate the error, following the residue calculus in \cite[Theorem 2]{saffar_shamshirgar_spectral_2017}, by
\begin{align}
\mathcal{E}_1(\alpha,b;\hk)
&\approx
\frac{2\pi}{\alpha}
\frac{\cosh(\alpha s)}{E}, \label{eq:app_m1_error_estimate} \\
\mathcal{E}_2(\alpha,b;\hk)
&\approx
\frac{\pi}{\alpha^3}
\bigg|
\frac{
    \cosh(\alpha s)
    -
    \alpha(s-\alpha C_{\screenhat})\sinh(\alpha s)
}{E}+
\frac{
    2\pi\alpha (E+1)\cosh(\alpha s)
}{
    \hk E^2
}
\bigg|,
\label{eq:app_m2_error_estimate}
\end{align}
where
\begin{equation}
E = e^{2\pi\alpha/\hk}-1, \qquad C_{\screenhat} = \decpar^2\screenhat''(0).
\label{eq:app_E_Cgamma}
\end{equation}
The estimate for $m=1$ is the analogue of the estimate in \cite[Appendix B]{shamshirgar_fast_2021}, here written for a more general screening function. The estimate for $m=2$ is obtained by the same residue calculus for the corresponding double-pole case.

\begin{remark}
The estimates are written in terms of $s=|b|$, since the model integral and the symmetric infinite trapezoidal rule are even functions of the separation
$b$. For $m=1$, \eqref{eq:app_m1_error_estimate} is monotone in $s$. For $m=2$, monotonicity is less clear, but numerical maximization over $s\in[0,L_j]$ indicates that the largest value is attained at the endpoint. Thus, in both cases we use $s=L_j$ for the corresponding free direction.
\end{remark}

\begin{remark}
The constant $C_{\screenhat}$ depends only on the screening function. For the harmonic and biharmonic screening functions we find $C_{\screenHhat}=\decpar^2\wfunchat''(0)$ and $C_{\screenBhat}=0$, where $\wfunchat''(0)$ can be evaluated as described in \cref{rem:PSWF_eval}.
\end{remark}

In the $2\per$ case there is one free Fourier direction. For $(k_1,k_2)\neq(0,0)$, we use
\begin{equation}
\mathcal E_m^{2\per}(k_1,k_2;\hk_3)=\mathcal E_m\!\left(\sqrt{k_1^2+k_2^2},L_3;\hk_3\right),\qquad m=1,2.
\label{eq:app_2P_error_estimate}
\end{equation}
In the $1\per$ case there are two free directions, resulting in a two-dimensional integral in $(\kappa_2,\kappa_3)$. We estimate this two-dimensional quadrature error by the sum of two one-dimensional estimates, one for each free direction:
\begin{equation}
\mathcal E_m^{1\per}(k_1;\hk_2,\hk_3)=\mathcal E_m(|k_1|,L_2;\hk_2)+\mathcal E_m(|k_1|,L_3;\hk_3),\qquad k_1\ne0.
\label{eq:app_1P_error_estimate}
\end{equation}

\section{Numerical results for Stokes kernels}\label{sec:stokes_results}Here we give the corresponding Stokes-kernel results for the parameter-selection and accuracy tests in  \cref{ss:error_control_param_selection}. \cref{tab:param-table-stokes} reports parameters selected on the unit cube for the requested tolerances. \cref{tab:stokes-tolerance} reports the achieved relative $\ell_2$ errors for a number of different periodicities and domain geometries.

\begin{table}[tb]
\centering
\caption{Parameters selected for the Stokes kernels on the unit cube for requested tolerance $\eeps$ and periodicity $D\per$.}
\label{tab:param-table-stokes}
\begin{tabular}{ll|llllllllll}
\multicolumn{12}{c}{\textbf{Stokeslet}} \\
\hline
$D\per$ & $\varepsilon$ & $c$ & $p$ & $\NF$ & $\Nper_1$ & $\Nper_2$ & $\Nper_3$ & $\Nfree_1$ & $\Nfree_2$ & $\Nfree_3$ & $\Rtrunc$ \\
\hline
$3\per$ & $10^{-3}$ & 10.47 & 13 & 19 & 3 & 3 & 3 & -- & -- & -- & -- \\
$3\per$ & $10^{-6}$ & 17.80 & 23 & 33 & 5 & 5 & 5 & -- & -- & -- & -- \\
$3\per$ & $10^{-9}$ & 25.13 & 34 & 47 & 7 & 7 & 7 & -- & -- & -- & -- \\
$3\per$ & $10^{-12}$ & 32.46 & 44 & 61 & 11 & 11 & 11 & -- & -- & -- & -- \\
\hline
$2\per$ & $10^{-3}$ & 10.47 & 13 & 19 & 3 & 3 & -- & -- & -- & 9 & 2.00 \\
$2\per$ & $10^{-6}$ & 17.80 & 23 & 33 & 5 & 5 & -- & -- & -- & 25 & 2.00 \\
$2\per$ & $10^{-9}$ & 25.13 & 34 & 47 & 7 & 7 & -- & -- & -- & 35 & 2.00 \\
$2\per$ & $10^{-12}$ & 32.46 & 44 & 61 & 11 & 11 & -- & -- & -- & 43 & 2.00 \\
\hline
$1\per$ & $10^{-3}$ & 10.47 & 13 & 19 & 3 & -- & -- & -- & 9 & 9 & 2.83 \\
$1\per$ & $10^{-6}$ & 17.80 & 23 & 33 & 5 & -- & -- & -- & 35 & 35 & 2.83 \\
$1\per$ & $10^{-9}$ & 25.13 & 34 & 47 & 7 & -- & -- & -- & 47 & 47 & 2.83 \\
$1\per$ & $10^{-12}$ & 32.46 & 44 & 61 & 11 & -- & -- & -- & 61 & 61 & 2.83 \\
\hline
\end{tabular}
\vspace{1em}

\begin{tabular}{ll|llllllllll}
\multicolumn{12}{c}{\textbf{Stresslet}} \\
\hline
$D\per$ & $\varepsilon$ & $c$ & $p$ & $\NF$ & $\Nper_1$ & $\Nper_2$ & $\Nper_3$ & $\Nfree_1$ & $\Nfree_2$ & $\Nfree_3$ & $\Rtrunc$ \\
\hline
$3\per$ & $10^{-3}$ & 10.47 & 12 & 19 & 3 & 3 & 3 & -- & -- & -- & -- \\
$3\per$ & $10^{-6}$ & 17.80 & 23 & 33 & 5 & 5 & 5 & -- & -- & -- & -- \\
$3\per$ & $10^{-9}$ & 25.13 & 33 & 47 & 7 & 7 & 7 & -- & -- & -- & -- \\
$3\per$ & $10^{-12}$ & 33.51 & 45 & 63 & 11 & 11 & 11 & -- & -- & -- & -- \\
\hline
$2\per$ & $10^{-3}$ & 10.47 & 12 & 19 & 3 & 3 & -- & -- & -- & 9 & 2.00 \\
$2\per$ & $10^{-6}$ & 17.80 & 23 & 33 & 5 & 5 & -- & -- & -- & 25 & 2.00 \\
$2\per$ & $10^{-9}$ & 25.13 & 33 & 47 & 7 & 7 & -- & -- & -- & 35 & 2.00 \\
$2\per$ & $10^{-12}$ & 33.51 & 45 & 63 & 11 & 11 & -- & -- & -- & 45 & 2.00 \\
\hline
$1\per$ & $10^{-3}$ & 10.47 & 12 & 19 & 3 & -- & -- & -- & 9 & 9 & 2.83 \\
$1\per$ & $10^{-6}$ & 17.80 & 23 & 33 & 5 & -- & -- & -- & 35 & 35 & 2.83 \\
$1\per$ & $10^{-9}$ & 25.13 & 33 & 47 & 7 & -- & -- & -- & 47 & 47 & 2.83 \\
$1\per$ & $10^{-12}$ & 33.51 & 45 & 63 & 11 & -- & -- & -- & 63 & 63 & 2.83 \\
\hline
\end{tabular}
\vspace{1em}

\begin{tabular}{ll|llllllllll}
\multicolumn{12}{c}{\textbf{Rotlet}} \\
\hline
$D\per$ & $\varepsilon$ & $c$ & $p$ & $\NF$ & $\Nper_1$ & $\Nper_2$ & $\Nper_3$ & $\Nfree_1$ & $\Nfree_2$ & $\Nfree_3$ & $\Rtrunc$ \\
\hline
$3\per$ & $10^{-3}$ & 6.28 & 8 & 11 & 3 & 3 & 3 & -- & -- & -- & -- \\
$3\per$ & $10^{-6}$ & 13.61 & 18 & 25 & 5 & 5 & 5 & -- & -- & -- & -- \\
$3\per$ & $10^{-9}$ & 20.94 & 29 & 39 & 7 & 7 & 7 & -- & -- & -- & -- \\
$3\per$ & $10^{-12}$ & 28.27 & 39 & 53 & 9 & 9 & 9 & -- & -- & -- & -- \\
\hline
$2\per$ & $10^{-3}$ & 6.28 & 8 & 11 & 3 & 3 & -- & -- & -- & 7 & 2.00 \\
$2\per$ & $10^{-6}$ & 13.61 & 18 & 25 & 5 & 5 & -- & -- & -- & 19 & 2.00 \\
$2\per$ & $10^{-9}$ & 20.94 & 29 & 39 & 7 & 7 & -- & -- & -- & 29 & 2.00 \\
$2\per$ & $10^{-12}$ & 28.27 & 39 & 53 & 9 & 9 & -- & -- & -- & 37 & 2.00 \\
\hline
$1\per$ & $10^{-3}$ & 6.28 & 8 & 11 & 3 & -- & -- & -- & 7 & 7 & 2.83 \\
$1\per$ & $10^{-6}$ & 13.61 & 18 & 25 & 5 & -- & -- & -- & 27 & 27 & 2.83 \\
$1\per$ & $10^{-9}$ & 20.94 & 29 & 39 & 7 & -- & -- & -- & 39 & 39 & 2.83 \\
$1\per$ & $10^{-12}$ & 28.27 & 39 & 53 & 9 & -- & -- & -- & 53 & 53 & 2.83 \\
\hline
\end{tabular}
\end{table}

\begin{table}[tb]
\centering
\caption{Achieved relative $\ell_2$ errors for the Stokes kernels for different periodicities, domains, and requested tolerances $\eeps$. The total number of sources and targets is $N=1000$, with half of the points uniformly distributed in the domain, and the other half clustered near the center of one top-level cube.}
\label{tab:stokes-tolerance}
\begin{tabular}{ll|llll}
\multicolumn{6}{c}{\textbf{Stokeslet}} \\
\hline
$D\per$ & $[L_1,L_2,L_3]$ & $\varepsilon=10^{-3}$ & $\varepsilon=10^{-6}$ & $\varepsilon=10^{-9}$ & $\varepsilon=10^{-12}$ \\
\hline
3$\per$ & $[2.0,1.0,1.0]$ & $2.1{\times}10^{-4}$ & $3.3{\times}10^{-7}$ & $3.4{\times}10^{-10}$ & $3.1{\times}10^{-13}$ \\
3$\per$ & $[1.0,1.3,1.7]$ & $3.3{\times}10^{-4}$ & $4.2{\times}10^{-7}$ & $3.6{\times}10^{-10}$ & $3.9{\times}10^{-13}$ \\
\hline
2$\per$ & $[3.0,1.0,2.0]$ & $2.5{\times}10^{-4}$ & $2.1{\times}10^{-7}$ & $2.0{\times}10^{-10}$ & $2.6{\times}10^{-13}$ \\
2$\per$ & $[1.0,5.0,2.0]$ & $1.7{\times}10^{-4}$ & $2.4{\times}10^{-7}$ & $3.9{\times}10^{-10}$ & $4.4{\times}10^{-13}$ \\
\hline
1$\per$ & $[3.0,1.0,2.0]$ & $1.6{\times}10^{-4}$ & $1.7{\times}10^{-7}$ & $1.8{\times}10^{-10}$ & $3.0{\times}10^{-12}$ \\
1$\per$ & $[1.0,1.0,2.0]$ & $2.0{\times}10^{-4}$ & $2.8{\times}10^{-7}$ & $2.0{\times}10^{-10}$ & $2.2{\times}10^{-13}$ \\
\hline
\end{tabular}

\vspace{1em}

\begin{tabular}{ll|llll}
\multicolumn{6}{c}{\textbf{Stresslet}} \\
\hline
$D\per$ & $[L_1,L_2,L_3]$ & $\varepsilon=10^{-3}$ & $\varepsilon=10^{-6}$ & $\varepsilon=10^{-9}$ & $\varepsilon=10^{-12}$ \\
\hline
3$\per$ & $[2.0,1.0,1.0]$ & $4.2{\times}10^{-5}$ & $8.5{\times}10^{-8}$ & $2.4{\times}10^{-10}$ & $1.0{\times}10^{-13}$ \\
3$\per$ & $[1.0,1.3,1.7]$ & $3.7{\times}10^{-5}$ & $6.0{\times}10^{-8}$ & $9.4{\times}10^{-11}$ & $5.7{\times}10^{-14}$ \\
\hline
2$\per$ & $[3.0,1.0,2.0]$ & $6.9{\times}10^{-5}$ & $7.5{\times}10^{-8}$ & $1.5{\times}10^{-10}$ & $9.8{\times}10^{-14}$ \\
2$\per$ & $[1.0,5.0,2.0]$ & $4.8{\times}10^{-5}$ & $7.2{\times}10^{-8}$ & $1.9{\times}10^{-10}$ & $1.1{\times}10^{-13}$ \\
\hline
1$\per$ & $[3.0,1.0,2.0]$ & $1.4{\times}10^{-5}$ & $2.3{\times}10^{-8}$ & $4.3{\times}10^{-11}$ & $1.2{\times}10^{-13}$ \\
1$\per$ & $[1.0,1.0,2.0]$ & $5.0{\times}10^{-5}$ & $8.3{\times}10^{-8}$ & $1.7{\times}10^{-10}$ & $8.5{\times}10^{-14}$ \\
\hline
\end{tabular}
\vspace{1em}

\begin{tabular}{ll|llll}
\multicolumn{6}{c}{\textbf{Rotlet}} \\
\hline
$D\per$ & $[L_1,L_2,L_3]$ & $\varepsilon=10^{-3}$ & $\varepsilon=10^{-6}$ & $\varepsilon=10^{-9}$ & $\varepsilon=10^{-12}$ \\
\hline
3$\per$ & $[2.0,1.0,1.0]$ & $4.3{\times}10^{-5}$ & $5.3{\times}10^{-8}$ & $3.7{\times}10^{-11}$ & $7.2{\times}10^{-14}$ \\
3$\per$ & $[1.0,1.3,1.7]$ & $8.5{\times}10^{-5}$ & $1.6{\times}10^{-7}$ & $9.4{\times}10^{-11}$ & $1.5{\times}10^{-13}$ \\
\hline
2$\per$ & $[3.0,1.0,2.0]$ & $8.5{\times}10^{-6}$ & $1.4{\times}10^{-8}$ & $8.9{\times}10^{-12}$ & $5.4{\times}10^{-14}$ \\
2$\per$ & $[1.0,5.0,2.0]$ & $2.6{\times}10^{-5}$ & $4.1{\times}10^{-8}$ & $3.6{\times}10^{-11}$ & $1.4{\times}10^{-13}$ \\
\hline
1$\per$ & $[3.0,1.0,2.0]$ & $3.5{\times}10^{-5}$ & $7.1{\times}10^{-8}$ & $4.6{\times}10^{-11}$ & $1.3{\times}10^{-13}$ \\
1$\per$ & $[1.0,1.0,2.0]$ & $4.2{\times}10^{-5}$ & $6.4{\times}10^{-8}$ & $5.5{\times}10^{-11}$ & $6.4{\times}10^{-14}$ \\
\hline
\end{tabular}

\end{table}

\clearpage
\bibliographystyle{siamplain}
\bibliography{dmk_per}

\end{document}

%% file: defs.tex
\def\reals{\R}
\renewcommand{\v}[1]{\bm{#1}}

\newcommand{\vx}{{\v x}}
\newcommand{\x}{{\v x}}
\renewcommand{\k}{{\v k}}
\newcommand{\vk}{{\v k}}

\newcommand{\vr}{{\v r}} % don't renew \r!
\newcommand{\vkappa}{{\v \kappa}}
\newcommand{\eeps}{\varepsilon}

\DeclareMathOperator\supp{supp}

\newcommand{\prol}{{\psi_0^c}}
\newcommand{\hatprol}{{\widehat{\psi}_0^c}}
\newcommand{\erf}{\operatorname{erf}}

\newcommand{\erfc}{\operatorname{erfc}}

\newcommand{\wfunc}{\phi} % General window function
\newcommand{\wfunchat}{\widehat{\phi}} % General window function
\newcommand{\decpar}{\nu} % Decomposition parameter (length)

\newcommand{\unitbox}{\mathcal{B}}

\newcommand*{\harmonic}{H}
\newcommand*{\biharmonic}{B}
\newcommand*{\stokeslet}{S}
\newcommand*{\rotlet}{\Omega}
\newcommand*{\rotlett}{\boldsymbol{\Omega}}
\newcommand*{\stresslet}{T}

\newcommand{\Bmollhat}{\widehat{B}_M}

\newcommand{\Hmollhat}{\widehat{H}_M}

\newcommand*{\Dop}{\mathcal{D}}
\newcommand*{\Dopt}{\bmatu{D}}
\newcommand*{\wh}[1]{\widehat{#1}}

\newcommand{\genericGhat}{\wh{\mathcal{G}}}
\newcommand{\Kres}{{K_R}}
\newcommand{\Kmoll}{{K_M}}
\newcommand{\Kdiff}{{K_D}}
\newcommand{\Khat}{\widehat{K}}
\newcommand{\Kmollhat}{\Khat_M}

\def\kmhat_#1{\widehat{K}_{M_{#1}}}
\def\kdhat_#1{\widehat{K}_{D_{#1}}}
\def\km_#1{K_{M_{#1}}}
\def\kr_#1{K_{R_{#1}}}
\def\kd_#1{K_{D_{#1}}}

\def\bmohat_#1{\widehat{B}_{M_{#1}}}
\def\brhat_#1{\widehat{B}_{R_{#1}}}
\def\bdhat_#1{\widehat{B}_{D_{#1}}}
\def\bmo_#1{B_{M_{#1}}}
\def\br_#1{B_{R_{#1}}}
\def\bd_#1{B_{D_{#1}}}

\newcommand{\vm}{\mathbf{m}}
\newcommand{\setm}[1]{\mathcal{I}_{#1}}

\newcommand{\R}{\mathbb{R}}

\newcommand{\tind}{\beta}
\newcommand{\sind}{\alpha}

\newcommand{\RSdiag}{S_{\text{diag}}}
\newcommand{\RSoffd}{S_{\text{offd}}}
\newcommand{\RTdiag}{T_{\text{diag}}}
\newcommand{\RToffd}{T_{\text{offd}}}
\newcommand{\ROoffd}{\Omega_{\text{offd}}}

\newcommand{\be}{\begin{equation}}
\newcommand{\ee}{\end{equation}}
\newcommand{\ba}{\begin{aligned}} 
\newcommand{\ea}{\end{aligned}}

\newcommand{\ulocal}{u_{\text{local}}}
\newcommand{\ufar}{u_{\text{far}}}
\newcommand{\uself}{u_{\text{self}}}

\newcommand{\udiffl}[1]{u_{\text{diff},#1}}
\newcommand{\ucompact}{u_{\text{compact}}}
\newcommand*{\per}{\mathcal{P}}
\newcommand{\perindsetD}{P_{D\per}}
\newcommand{\perindset}[1]{P_{#1\per}}

\newcommand*{\wavenumset}{\mathcal{K}}
\newcommand*{\wavenumsettrunc}{\mathcal{K}_{\mathrm{trunc}}}
\newcommand{\uP}[1]{u^{#1\per}}
\newcommand{\ulocalP}[1]{\ulocal^{#1\per}}
\newcommand{\ufarP}[1]{\ufar^{#1\per}}
\newcommand{\uselfP}[1]{\uself^{#1\per}}

\newcommand{\udifflP}[2]{\udiffl{#1}^{#2\per}}
\newcommand{\ucompactP}[1]{\ucompact^{#1\per}}
\newcommand{\noLevels}{\mathcal{L}}

\newcommand{\uFourierP}{U}
\newcommand{\uHatFourierP}{\widehat{U}}
\newcommand{\boxset}{\bar{\Omega}}
\newcommand{\Omegapad}{\Omega_{\mathrm{pad}}}
\newcommand{\Omegaext}{\Omega_{\mathrm{ext}}}
\newcommand{\notopboxes}{M}
\newcommand{\notopboxesext}{\notopboxes^{\mathrm{ext}}}
\newcommand{\boxind}{m}
\newcommand{\boxcenter}{\cb_\boxind}

\newcommand{\cb}{\mathbf{c}}

\newcommand{\Kmax}{K_{\mathrm{max}}}
\newcommand{\kint}{\bar{k}}
\newcommand{\kintx}{\kint_1}
\newcommand{\kinty}{\kint_2}
\newcommand{\kintz}{\kint_3}

\newcommand{\kmaxx}{\kintx^{\mathrm{max}}}
\newcommand{\kmaxy}{\kinty^{\mathrm{max}}}
\newcommand{\kmaxz}{\kintz^{\mathrm{max}}}
\newcommand{\kmaxgen}{\kint^{\mathrm{max}}}
\newcommand{\kmaxi}{\kint_i^{\mathrm{max}}}
\newcommand{\kmaxj}{\kint_j^{\mathrm{max}}}

\newcommand{\freesetk}{\mathcal{F}}
\newcommand{\freesetkP}[1]{\freesetk^{{#1}\per}}
\newcommand{\hk}{\Delta \kappa}

\newcommand{\screen}{\gamma}
\newcommand{\screenhat}{\widehat{\gamma}}

\newcommand{\screenHhat}{\widehat{\gamma}^H}
\newcommand{\Rtrunc}{\mathcal{R}}
\newcommand{\rect}{\operatorname{rect}}

\newcommand{\screenBhat}{\widehat{\gamma}^B}
\newcommand{\alphamin}{\alpha_{\mathrm{min}}}

\newcommand{\sgn}{\operatorname{sgn}}
\newcommand{\genericGopt}{\bmat{\mathcal{G}}}
\newcommand{\genericGhatopt}{\wh{\bmat{\mathcal{G}}}}
\newcommand{\Nper}{\bar{K}^{\mathrm{per}}}
\newcommand{\Nfree}{\bar{K}^{\mathrm{free}}}
\newcommand{\NFP}[1]{\bar{K}_{\mathrm{tot}}^{#1\per}}
\newcommand{\NF}{\bar{K}_{F}}
\newcommand{\Ncube}{N_{\mathrm{cube}}}

\newcommand{\kbar}{{\bar{k}}}

\usepackage{fix-cm}
\DeclareMathAlphabet{\mathsfit}{T1}{\sfdefault}{\mddefault}{\sldefault}
\SetMathAlphabet{\mathsfit}{bold}{T1}{\sfdefault}{\bfdefault}{\sldefault}
\newcommand*{\bmat}[1]{\boldsymbol{\mathsfit{#1}}}
\newcommand*{\bmatu}[1]{\boldsymbol{\mathsf{#1}}}

%% file: interaction_levels.tikz
\begin{tikzpicture}

\newcommand\bluecolor{blue!40};

\renewcommand\xx{1.85};
\renewcommand\yy{1.9};

\newcommand\LL{3};
\newcommand\DD{1};
\newcommand\dotsize{0.04}

\coordinate (P1) at (\xx,\yy);
\coordinate (P2) at (\xx+\LL+\DD,\yy);
\coordinate (P3) at (\xx+2*\LL+2*\DD,\yy);

\draw[fill=\bluecolor] (0,0) rectangle (3,3);
\draw[fill=\bluecolor] (5,1) rectangle (6.5,2.5);
\draw[fill=\bluecolor] (5+\LL+\DD+0.5,1+0.5) rectangle (5+\LL+\DD+1.25,1+1.25);

\draw[step=1, very thick] (0,0) grid (3,3);

\draw[step=1, very thick] (3.99,0) grid (7,3);
\draw[step=1/2, thick] (3.99,0) grid (7,3);

\draw[step=1/4, semithick] (7.99,0) grid (11,3);
\draw[step=1/2, thick] (7.99,0) grid (11,3);
\draw[step=1, very thick] (7.99,0) grid (11,3);

\draw[fill] (P1) circle (\dotsize);
\draw[fill] (P2) circle (\dotsize);
\draw[fill] (P3) circle (\dotsize);

\draw[red,semithick] (P1) circle (1);
\draw[red,semithick] (P2) circle (1/2);
\draw[red,semithick] (P3) circle (1/4);

\draw[<->,red,thin] (P1)--(\xx+1,\yy) node[midway,below,scale=0.8]{$r_\ell$};

\node[below] (label1) at (\LL/2,0){Level $\ell$};
\node[below] (label1) at (\LL/2+\LL+\DD,0){Level $\ell+1$};
\node[below] (label1) at (\LL/2+2*\LL+2*\DD,0){Level $\ell+2$};

\end{tikzpicture}

%% file: periodic_L1.tikz
\begin{tikzpicture}[scale=2.4]
\newcommand\LI{1};
\newcommand\LII{1.85};

\draw[|<->|] (0,-0.1)--(\LII,-0.1) node[midway,below,scale=1]{$L_2$};

\draw[|<->|] (3.2*\LI,0)--(3.2*\LI,\LI) node[midway,right,scale=1]{$L_1$};

\clip (-1.1\LI,-0.1) rectangle + (4.2*\LI,\LI+0.2);

\foreach \i in {0}
{
	\foreach \j in {-1,1}
	{
		\draw[fill=gray!10,very thick, densely dotted] (\j*\LII,\i*\LI)  rectangle (\j*\LII+\LII,\i*\LI+\LI);
	}
}

\draw[fill=gray!50,line width=1.8pt] (0,0) rectangle (\LII,\LI);

% Clustered source points in the main box, with periodic copies
\pgfmathsetseed{2}

% cluster center x / cluster center y / number of points
\foreach \cx/\cy/\n in {
    0.24/0.28/12,
    1.4/0.30/13,
    0.72/0.74/13,
    1.48/0.68/13
}{
    \foreach \k in {1,...,\n}{
        \pgfmathrandominteger{\dx}{-200}{250}
        \pgfmathrandominteger{\dy}{-220}{200}
        \pgfmathsetmacro{\px}{\cx + \dx/1000}
        \pgfmathsetmacro{\py}{\cy + \dy/1000}

        % original source point: black circle
        \filldraw[very thin] (\px,\py) circle (0.02);

        % periodic copies: blue circles
        \foreach \i in {0}
        {
            \foreach \j in {-1,1}
            {
                \filldraw[very thin,blue]
                    ({\px+\LII*\j},{\py+\LI*\i}) circle (0.02);
            }
        }
    }
}

\draw[step=\LI,red] (-\LI,0) grid (3*\LI,\LI);
\draw[red, thick, opacity=0.8] (-\LI,0) rectangle (3*\LI,\LI);
\end{tikzpicture}